\numberwithin{equation}{section}
\theoremstyle{plain}
\newtheorem{theorem}{Theorem}[section]
\newtheorem{lemma}[theorem]{Lemma}
\theoremstyle{remark}
\newtheorem{remark}[theorem]{Remark}
\newcommand{\Tr}{{\bf Tr}}
\begin{document}

\begin{frontmatter}
\title{The Spectrum of Random Inner-product Kernel Matrices}
\runtitle{Inner-product Random Kernel Matrices}

\begin{aug}
\author{\fnms{Xiuyuan} \snm{Cheng}
\ead[label=e1]{xiuyuanc@math.princeton.edu}}
\and
\author{\fnms{Amit} \snm{Singer}
\ead[label=e2]{amits@math.princeton.edu}}
\runauthor{X. Cheng and A. Singer}
\affiliation{Princeton University}
\address{Princeton University, \\
Fine Hall, Washington Rd.,\\
Princeton, NJ, 08544\\
}
\end{aug}

\begin{abstract}
We consider $n$-by-$n$ matrices whose $(i,j)$-th entry is $f(X_{i}^{T}X_{j})$, where $X_1,\ldots,X_n$ are i.i.d. standard Gaussian random vectors in $\mathbb{R}^{p}$, and $f$ is a real-valued function. The eigenvalue distribution of these random kernel matrices is studied at the ``large $p$, large $n$" regime. It is shown that, when $p,n \to \infty$ and $p/n = \gamma $ which is a constant, and $f$ is properly scaled so that $Var(f(X_{i}^{T}X_{j}))$ is $\mathcal{O}(p^{-1})$, the spectral density converges weakly to a limiting density on $\mathbb{R}$. The limiting density is dictated by a cubic equation involving its Stieltjes transform. While for smooth kernel functions the limiting spectral density has been previously shown to be the Marcenko-Pastur distribution, our analysis is applicable to non-smooth kernel functions, resulting in a new family of limiting densities.

\end{abstract}

\begin{keyword}[class=AMS]
\kwd[Primary ]{	62H10}
\kwd[; secondary ]{ 60F99}
\end{keyword}

\begin{keyword}
\kwd{kernel matrices, limiting spectrum, random matrix theory, Stieltjes transform, Hermite polynomials}
\end{keyword}

\end{frontmatter}

\section{Introduction}

In recent years there has been significant progress in the development and application of kernel methods in machine learning and statistical analysis of high-dimensional data \cite{scholkopf2002learning}. These methods include kernel PCA (Principal Component Analysis), the ``kernel trick" in SVM (Support Vector Machine), and non-linear dimensionality reduction \cite{BelkinNiyogi, lafon2006}, to name a few. In such kernel methods, the input is a set of $n$ high-dimensional data points $X_1,\ldots, X_n$ from which an $n$-by-$n$ matrix is constructed, where its $(i,j)$-th entry is a symmetric function of $X_i$ and $X_j$. Whenever the function depends merely on the inner-product $X_i^TX_j$, it is called an inner-product kernel matrix.

In this paper we study the spectral properties of an $n \times n$ symmetric random kernel matrix $A$ whose construction is as follows. Let $X_1,\ldots, X_n$ be $n$ i.i.d Gaussian random vectors in $\mathbb{R}^p$, where $X_i \sim \mathcal{N}(0,p^{-1}I_p)$ and $I_p$ is the $p\times p$ identity matrix. That is, the $np$-many coordinates $\{(X_{i})_{j},1\le i\le n,1\le j\le p\}$ are i.i.d Gaussian random variables with mean 0 and variance $p^{-1}$. The entries of $A$ are defined as
\begin{equation}
A_{ij}=\begin{cases}
f(X_{i}^{T}X_{j};p), & i\neq j,\\
0, & i=j,
\end{cases}
\label{eq:kernalmatrixA}
\end{equation}
where $f(\xi;p)$ is a real-valued function possibly depending on $p$.
We will later consider another model where $X_i$ are drawn from the uniform distribution over the unit sphere $S^{p-1}$ in $\mathbb{R}^p$.

The study of the spectrum of large random matrices, since Wigner's semi-circle law, has been an active research area motivated by applications such as quantum physics, signal processing, numerical linear algebra, statistical inference, among others. An important result is the Marcenko-Pastur (M.P.) law \cite{marchenko1967distribution} for the spectrum of random matrices of the form $S = XX^T$ (also known as Wishart matrices), where $X$ is a $p$-by-$n$ (complex or real) matrix with i.i.d Gaussian entries. In the  ``large $p$, large $n$" limit, i.e. $p,n\rightarrow\infty$ and $p/n=\gamma$ ($0 < \gamma < \infty$), the spectral density of $S$ converges to a deterministic limit, known as the Marcenko-Pastur distribution, which has $\gamma$ as its only parameter. We refer the reader to \cite{Anderson2010book}, \cite{Tao2011book} and \cite[Chapters 1-3]{bai} for an introduction of these topics. Notice that Wishart matrices share the non-zero eigenvalues with their corresponding Gram matrices $G=X^TX$, the latter of which, neglecting the difference at the diagonal entries, can be considered as a
kernel matrix as in Eqn. (\ref{eq:kernalmatrixA})
with the linear kernel function $f(\xi;p)=\xi$.
Thus, the M.P. law and other results involving Wishart matrices
can be translated to the Gram matrix case.

The spectrum of inner-product random kernel matrices with kernel functions that are locally smooth at the origin has been studied in \cite{karoui10}. It was shown that, in the limit $p,n\rightarrow\infty$ and $p/n=\gamma$, 
\begin{itemize}
\item[(1)]
whenever $f$ is locally $C^{3}$, the non-linear kernel matrix converges asymptotically in spectral norm to a linear kernel matrix;
\item[(2)] 
with less regularity of $f$ (locally $C^{2}$), the weak convergence of the spectral density is established.
\end{itemize} We refer to \cite{karoui10}
and references therein for more details, including a complete review of the origins of this problem. The problem we study here is similar to the one considered in ~\cite{karoui10}, except that we allow the kernel function $f$ to belong to a much larger class of functions, in particular, $f$ can be discontinuous at the origin.

Our main result, Thm \ref{thm:main}, establishes the convergence of the spectral density of random kernel matrices under the condition that the kernel function belongs to a weighted $L^2$ space, is properly normalized and satisfies some additional technical conditions. The limiting spectral density is characterized by an algebraic equation, Eqn. (\ref{eq:mzeqn}), of its Stieltjes transform. The equation involves only three parameters, namely $\nu$, $a$ and $\gamma$. The parameter $\nu$ is the limit of $p\cdot Var(f(X_i^TX_j))$ and simply scales the limiting spectral density. The parameter $a$ is the limiting coefficient of the linear term $\xi$ in the expansion of $f(\xi;p)$ into rescaled Hermite polynomials, and has some non-trivial effect on the shape of the limiting spectral density. The result concerning the weak convergence of the spectral density in \cite{karoui10} can be regarded as a special case of our result. Specifically, \cite{karoui10} proves that for a locally smooth kernel function, the limiting spectral density is dictated by its first-order Taylor expansion. The linear term in our rescaled Hermite expansion asymptotically coincides with the first-order term of the Taylor expansion. See also Remark \ref{rk:smoothf} after Thm. \ref{thm:main}.

Notice that the entries of the random kernel matrix are dependent. For example, the triplet of entries $(i,j)$, $(j,k)$ and $(k,i)$ are mutually dependent. 
In the literature of random matrix theory (RMT),
random matrices with dependent entries have received
some attention.
For example, the spectral distribution of random matrices with ``finite-range" dependency among entries is studied in \cite{Anderson2008}.
However, we do not find studies of this sort to be readily applicable to the analysis of the random inner-product kernel matrices considered here. 
We emphasize that our result only addresses the weak limit of the spectral density, while leaving many other questions about random kernel matrices unanswered. These include the analysis of the local statistics of the eigenvalues, the limiting distribution of the largest eigenvalue, and universality type questions with respect to different probability distributions for the data points. 

The rest of the paper is organized as follows:
in Sec. \ref{sec:quickreview} we review the definition and properties of the Stieltjes transform (Sec. \ref{sub:reviewmz}), and revisit the proof of the M.P. law using the Stieltjes transform (Sec. \ref{subsec:A1isMP}). Sec. \ref{sec:L2f} includes the statement of our main theorem, Thm. \ref{thm:main}, and the result of some numerical experiments. The proof of Thm. \ref{thm:main} is established in Sec. \ref{sec:proof}. Finally, the concluding remarks, discussion and open problems are provided in Sec. \ref{sec:discussion}.

Notations: For a vector $X$, we denote by $|X|$ its $l^{2}$ norm, i.e. for $X=(X_{1},\cdots,X_{p})^{T}$ in $\mathbb{R}^{p}$, $|X|=\sqrt{X_{1}^{2}+\cdots+X_{p}^{2}}$. We write $x={\cal O}(1)p^{\alpha}$ to indicate that $|x|\le Cp^{\alpha}$ for some positive constant $C$ and large enough $p$ (which also implies large enough $n$ since  $p/n=\gamma$).  Also, ${\cal O}_{a}(1)$ means that the constant $C$ depends on the quantity $a$, and the latter is often independent of $p$. Throughout the paper, $\zeta$ stands for a random variable observing the standard normal distribution.

\section{Review of the Stieltjes Transform and the M.P. Law}\label{sec:quickreview}

\subsection{The Stieltjes Transform}\label{sub:reviewmz} 

For a probability measure $d\mu$ on $\mathbb{R}$, its Stieltjes
transform (also known as the Cauchy transform) is defined as (see, e.g. Appendix B of \cite{bai})
\[
m(z)=\int_{\mathbb{R}}\frac{1}{t-z}d\mu(t),\quad \Im(z)>0,
\]
 and hence $\Im(m)>0$. The probability density function can be recovered from its Stieltjes transform via the ``inversion formula" 
\begin{equation}\label{eq:inversionformula}
\lim_{b\rightarrow0+}\frac{1}{\pi}\Im(m(t+ib))=\frac{d\mu}{dt}(t),
\end{equation}
 where the convergence is in the weak sense.

Point-wise convergence of the Stieltjes transform implies weak convergence of the probability density (Thm. B.9 in \cite{bai}). This is the fundamental tool that we use to establish the main result in our paper. For the $n$-by-$n$ random kernel matrix $A$, its empirical spectral density (ESD) is defined as 
\begin{equation}\label{eq:ESD}
ESD_{A}=\frac{1}{n}\sum_{i=1}^{n}\delta_{\lambda_{i}(A)}(\lambda)d\lambda,
\end{equation}
where $\{\lambda_{i}(A),i=1,\cdots,n\}$ are the $n$ (real) eigenvalues of $A$.  Considering $ESD_{A}$ as a random probability measure on $\mathbb{R}$, we have its Stieltjes transform as
 \begin{equation}\label{eq:mAdef}
m_{A}(z)=\frac{1}{n}\sum_{i=1}^{n}\frac{1}{\lambda_{i}(A)-z}=\frac{1}{n}\mathbf{Tr}(A-zI)^{-1},\quad\Im(z)>0.
\end{equation}
To show the convergence of $ESD_{A}$, in expectation (or in a.s. sense), it suffices to show that, for every fixed $z$ above the real axis, $m_{A}(z)$ converges to the Stieltjes transform of the limiting density in expectation (or in a.s. sense).

Another convenience brought by fixed $z$ is the uniform boundedness of many quantities. Specifically, for $z=u+iv$, $v>0$, \[
|m_{A}(z)|\le\frac{1}{n}\sum_{i=1}^{n}\frac{1}{|\lambda_{i}(A)-z|}\le\frac{1}{n}\sum_{i=1}^{n}\frac{1}{v}=\frac{1}{v}.
\]
Also,
\begin{equation}
\left|\left((A-zI)^{-1}\right)_{ii}\right|\le\frac{1}{v},\quad1\le i\le n,
\label{eq:boundii}
\end{equation}
 which follows from the spectral decomposition of $A$.

\subsection{Proving the M.P. Law using the Stieltjes Transform}
\label{subsec:A1isMP}

Thm. \ref{thm:A1isMP} is the version of the M.P. law for random kernel matrices with a linear kernel function. 
The version for Wishart matrices is well known and its proof can be found in many places, see e.g. \cite[Chapter 3.3]{bai}. 

\begin{theorem}[the M.P. law for random linear-kernel matrices]\label{thm:A1isMP}
Suppose that $X_{i}\sim \mathcal{N}(0,p^{-1}I_{p})$. Let $A$ be the random kernel matrix as in Eq. (\ref{eq:kernalmatrixA}) with the kernel function $f(\xi)=a\xi$ where $a$ is a constant. Then the limiting spectral density of $A$ is 
\begin{equation}
\rho_{I}(t)=\frac{1}{a}\rho_{M.P.}\left(\frac{t+a}{a};\frac{1}{\gamma}  \right).
\label{eq:rho1}
\end{equation}
The density function $\rho_{M.P.}(t; y)$, with positive constant $y$ as a parameter, is defined as 
\begin{equation}
\rho_{M.P.}(t;y)=\left(1-\frac{1}{y}\right)^{+}\delta_{0}(t)+\frac{\sqrt{(b(y)-t)^{+}(t-a(y))^{+}}}{2\pi y t},
\label{eq:mp0}
\end{equation}
 where $(x)^{+} = \max \{ x, 0 \}$, $b(y)=(1+\sqrt{y})^{2}$
and $a(y)=(1-\sqrt{y})^{2}$. The convergence of $ESD_{A}$ to $\rho_I(t)dt$ 
is in the weak sense, almost surely. 
\end{theorem}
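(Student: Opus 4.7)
The plan is to reduce the linear-kernel matrix $A$ to a standard Wishart-type Gram matrix and then argue that the zero diagonal imposed by (\ref{eq:kernalmatrixA}) contributes a negligible perturbation. Let $S = X^{T}X$, let $D = \mathrm{diag}(|X_{1}|^{2},\ldots,|X_{n}|^{2})$, and set $B = a(S - I)$. Then
\begin{equation*}
A \;=\; a(S - D) \;=\; B \;+\; a(I - D),
\end{equation*}
so $A$ and $B$ differ only by a diagonal matrix.

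First, I would show that this diagonal perturbation vanishes in operator norm. Since $p\,|X_{i}|^{2}$ is $\chi^{2}_{p}$-distributed, standard exponential concentration for Gaussian quadratic forms together with a union bound over $i = 1,\ldots,n$ yields $\max_{i}\bigl|1 - |X_{i}|^{2}\bigr| \to 0$ almost surely as $p,n \to \infty$ with $p/n = \gamma$. In particular $\|A - B\|_{\mathrm{op}} \to 0$ a.s. For any fixed $z = u + iv$ with $v > 0$, the resolvent identity
\begin{equation*}
(A - zI)^{-1} - (B - zI)^{-1} \;=\; (B - zI)^{-1}(B - A)(A - zI)^{-1},
\end{equation*}
combined with the bound $\|(M - zI)^{-1}\|_{\mathrm{op}} \le 1/v$ underlying (\ref{eq:boundii}), gives
\begin{equation*}
|m_{A}(z) - m_{B}(z)| \;\le\; \frac{\|A - B\|_{\mathrm{op}}}{v^{2}} \;\longrightarrow\; 0 \quad \text{a.s.}
\end{equation*}

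Next I would identify the limiting Stieltjes transform of $B$. Writing $X_{i} = p^{-1/2} Y_{i}$ with $Y_{i}$ standard Gaussian in $\mathbb{R}^{p}$, we have $S = p^{-1} Y^{T} Y$ where $Y = [Y_{1},\ldots,Y_{n}]$ is $p \times n$ with i.i.d.\ $\mathcal{N}(0,1)$ entries. This is a standard Wishart-type Gram matrix of aspect ratio $n/p \to 1/\gamma$, so the classical M.P.\ law cited in the excerpt gives $\mathrm{ESD}_{S} \to \rho_{M.P.}(\cdot;1/\gamma)$ weakly and almost surely. Since the eigenvalues of $B$ are $a(\lambda_{i}(S) - 1)$, a linear change of variable $t = a(\lambda - 1)$ transforms $\rho_{M.P.}(\cdot;1/\gamma)$ into exactly the density $\rho_{I}(t)$ of (\ref{eq:rho1}), so $m_{B}(z) \to m_{\rho_{I}}(z)$ a.s.\ for every $z$ in the upper half-plane.

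Combining these two pieces yields $m_{A}(z) \to m_{\rho_{I}}(z)$ a.s.\ pointwise in $\{z : \Im(z) > 0\}$, and the weak a.s.\ convergence of $\mathrm{ESD}_{A}$ to $\rho_{I}(t)\,dt$ then follows from the inversion formula (\ref{eq:inversionformula}) together with Thm.~B.9 of \cite{bai}. The only real check is the diagonal concentration; because $p\,|X_{i}|^{2}$ has Gaussian-like exponential tails, a direct Borel–Cantelli argument handles the almost-sure uniform bound, and I do not foresee any substantive obstacle in the remainder of the argument.
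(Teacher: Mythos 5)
Your argument is correct, but it takes a genuinely different route from the paper. You decompose $A = a(X^{T}X - I) + a(I - D)$ with $D = \mathrm{diag}(|X_1|^2,\ldots,|X_n|^2)$, kill the diagonal perturbation by $\chi^2_p$ concentration plus the resolvent bound $|m_A(z) - m_B(z)| \le v^{-2}\|A-B\|_{\mathrm{op}}$, and then invoke the classical M.P.\ law for the Wishart-type matrix $X^{T}X$, finishing with the affine change of variables $\mu = a(\lambda - 1)$. The paper instead re-derives the M.P.\ law from scratch by the resolvent method: Lemma~\ref{lemma:m_A-Em_A} (a martingale concentration argument) reduces a.s.\ convergence to convergence of $\mathbb{E}m_A(z)$, the Schur-complement formula for $((A - zI)^{-1})_{nn}$ combined with conditioning on $X_n$ and Sherman--Morrison produces the self-consistent quadratic equation~(\ref{eq:mz1}), and uniqueness of its root in the upper half-plane closes the argument. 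The paper is explicit that this is a deliberate detour---Remark~\ref{rk:A1_1} states that a simpler proof exists but the longer one is given because it rehearses precisely the machinery (conditioning on $X_n$, Sherman--Morrison, moment bounds on quadratic forms, large-probability truncation sets) that is reused in the proof of the main Theorem~\ref{thm:main}, where no reduction to a known Wishart spectral law is available since the kernel is nonlinear. Your reduction buys brevity and transparency; the paper's buys a warm-up for the general case. One minor point worth stating explicitly, which the paper also glosses with ``WLOG $a=1$'': your change of variables and the formula $\tfrac{1}{a}\rho_{M.P.}\bigl(\tfrac{t+a}{a};\tfrac{1}{\gamma}\bigr)$ implicitly take $a > 0$.
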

 
\begin{remark}
In Eq. (\ref{eq:rho1}), the rescaling by $a$ is due to the constant $a$ in front of the inner-product, and the shifting by $a$ is due to our setting diagonal entries to be zero. Also, Eq. (\ref{eq:mp0}) is slightly different from the M.P. distribution in literature, since the random kernel matrices that we consider are $n$-by-$n$ and the variance of $X_{i}^{T}X_{j}$ is $p^{-1}$, while Wishart matrices are $p$-by-$p$ and have a different normalization.
\end{remark}

\begin{remark} \label{rk:A1_2}
The distribution of the largest eigenvalue (i.e. the spectral norm, denoted as $s(A)$), a question independent from the limiting spectral density, is well-understood for Wishart matrices, and thus applies to Gram matrices. It has been shown that the largest eigenvalue converges almost surely to its mean value, following a stronger result about the limiting distribution of the largest eigenvalue, namely the Tracy-Widom Law \cite{johnstone2001distribution}. The Tracy-Widom Law of the largest eigenvalue has been shown to be universal for certain sample covariance matrices with non-Gaussian entries, see e.g. \cite{soshnikov2002note, karoui07}. 
As a result (the smallest eigenvalue of a Wishart matrix is always non-negative), as $p,n\to \infty, p/n=\gamma$, almost surely $s(A)<b(\gamma^{-1})+1$, which is an ${\cal O}(1)$ constant depending on $\gamma$ only. 
\end{remark}

Another way to characterize Eq. (\ref{eq:rho1}) is that $m_I(z)$, the Stieltjes transform of $\rho_I(t)$, satisfies the following quadratic equation 
\begin{equation}
-\frac{1}{m(z)}=z+a\left(1-\frac{1}{1+\frac{a}{\gamma}m(z)}\right).
\label{eq:mz1}
\end{equation}
In the literature, Eq. (\ref{eq:mz1}) is sometimes called the M.P. equation. 
It has been shown that  Eq. (\ref{eq:mz1}) has a unique solution with positive imaginary part (Lemma 3.11 of \cite{bai}).

We reproduce the proof of Thm. \ref{thm:A1isMP} here, since some key techniques will be used in proving our main result. 

\begin{proof}[Proof of Thm. \ref{thm:A1isMP}]
In two steps it can be shown that $m_A(z)$, as defined in Eq. (\ref{eq:mAdef}), converges almost surely to the solution of Eq. (\ref{eq:mz1}). Without loss of generality, let $a=1$. 

{\bf Step 1. Reduce a.s. convergence to convergence of $\mathbb{E}m_{A}(z)$.}

\begin{lemma}[concentration of $m_A$ at $\mathbb{E}m_A$]\label{lemma:m_A-Em_A}
 For the $n$-by-$n$ random kernel matrix $A$ as in Eq. (\ref{eq:kernalmatrixA}), 
 where $X_{i}$'s are independent random vectors, 
 and a fixed complex number $z$ with $\Im (z) >0$ , we have that as $n\rightarrow\infty$, 
 \[
m_{A}(z)-\mathbb{E}m_{A}(z)\rightarrow0
\]
 almost surely, and also 
 \begin{equation}
\mathbb{E}|m_{A}-\mathbb{E}m_{A}| \le{\cal O}(1)n^{-1/2}. 
\label{eq:Lemma.|m_A-Em_A|<n^-1/2}
\end{equation}
\end{lemma}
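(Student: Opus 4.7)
The plan is to use the standard martingale-difference decomposition along the $X_i$'s, combined with a rank-2 perturbation estimate on $m_A(z)$. Introduce the filtration $\mathcal{F}_k = \sigma(X_1,\ldots,X_k)$ with the convention $\mathcal{F}_0 = \{\emptyset,\Omega\}$, and set $\mathbb{E}_k[\cdot] := \mathbb{E}[\cdot \mid \mathcal{F}_k]$ so that $\mathbb{E}_0 = \mathbb{E}$ and $\mathbb{E}_n = \mathrm{id}$. Writing $z = u + iv$ with $v > 0$, decompose
\[
m_A(z) - \mathbb{E}m_A(z) = \sum_{k=1}^{n} \gamma_k, \qquad \gamma_k := (\mathbb{E}_k - \mathbb{E}_{k-1})\, m_A(z),
\]
so that $\{\gamma_k\}_{k=1}^n$ is a (complex) martingale difference sequence with respect to $\{\mathcal{F}_k\}$.

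Next, I would control each $\gamma_k$ by a rank-2 perturbation argument. Let $A^{(k)}$ denote the matrix obtained from $A$ by setting its $k$-th row and column to zero; then $A^{(k)}$ is a function of $\{X_i:i\neq k\}$ only, so $(\mathbb{E}_k - \mathbb{E}_{k-1})\,m_{A^{(k)}}(z) = 0$ and hence $\gamma_k = (\mathbb{E}_k - \mathbb{E}_{k-1})\bigl(m_A(z) - m_{A^{(k)}}(z)\bigr)$. Since $A - A^{(k)}$ is Hermitian of rank at most $2$, the standard interlacing bound for Stieltjes transforms (Lemma 2.6 of \cite{bai}) yields
\[
|m_A(z) - m_{A^{(k)}}(z)| \le \frac{2}{n v},
\]
which, combined with $|\mathbb{E}_k - \mathbb{E}_{k-1}| \le 2$, gives $|\gamma_k| \le 4/(n v)$ pointwise.

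Orthogonality of martingale differences then gives
\[
\mathbb{E}\,|m_A - \mathbb{E}m_A|^2 \;=\; \sum_{k=1}^n \mathbb{E}|\gamma_k|^2 \;\le\; \frac{16}{n v^2},
\]
and Cauchy--Schwarz yields the claimed $\mathbb{E}|m_A - \mathbb{E}m_A| \le \mathcal{O}(1)\,n^{-1/2}$ bound, with implicit constant depending only on $v = \Im(z)$. For the almost sure statement, I would apply the Azuma--Hoeffding inequality to the real and imaginary parts of $\sum_k \gamma_k$ separately using the uniform bound $|\gamma_k| \le 4/(nv)$, obtaining
\[
\mathbb{P}\bigl(|m_A(z) - \mathbb{E}m_A(z)| > \varepsilon\bigr) \;\le\; 4\exp\bigl(- c_v \varepsilon^2 n\bigr)
\]
for a constant $c_v > 0$; the right-hand side is summable in $n$, so Borel--Cantelli completes the proof.

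The only point requiring care is the rank-$2$ observation: changing a single $X_k$ modifies \emph{both} the $k$-th row and the $k$-th column of $A$, so the perturbation is rank $2$ rather than rank $1$, but this is exactly what the Hermitian interlacing bound handles. Beyond that, the argument uses nothing about the kernel $f$ or the Gaussian law of the $X_i$'s other than independence of the $X_i$'s and the Hermitian nature of $A$, so the lemma is purely structural.
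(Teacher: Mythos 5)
Your proof is correct and uses the same martingale-difference decomposition along the data vectors as the paper, but completes the two estimates by a slightly different route. The paper bounds the martingale differences $\gamma_k$ uniformly by going through the minor $A^{(k)}$ (the $(n-1)\times(n-1)$ matrix obtained by deleting the $k$-th row and column) and the Cauchy interlacing theorem, then applies Burkholder's inequality with $\beta=4$ to get $\mathbb{E}|m_A-\mathbb{E}m_A|^4 = \mathcal{O}(n^{-2})$, from which Borel--Cantelli gives the a.s. statement and Jensen gives the $L^1$ bound. You instead keep $A^{(k)}$ as an $n\times n$ matrix with the $k$-th row and column zeroed, use the rank-$2$ perturbation estimate to get the same uniform $\mathcal{O}((nv)^{-1})$ bound on $\gamma_k$, then obtain the $L^1$ bound directly from $L^2$ orthogonality of martingale differences plus Cauchy--Schwarz, and the a.s. statement from Azuma--Hoeffding plus Borel--Cantelli. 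Both routes are valid; yours is arguably more elementary since it avoids Burkholder entirely and gives an explicit exponential tail. Two small notes: a rank-$2$ Hermitian perturbation gives $|m_A - m_{A^{(k)}}|\le 4/(nv)$ rather than $2/(nv)$ (the minor version in the paper gives the same $4/(nv)$ after dividing the trace bound by $n$), which only affects the constant; and as you observe, nothing in either argument uses the Gaussian law of the $X_i$ or the particular form of $f$, only independence of the columns and Hermitian structure, which is exactly the level of generality the lemma is stated at.
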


The above lemma relies on that $\Im(z) > 0$ and that the $X_i$'s are independent, while there is no restriction on the specific form of the kernel function, nor on the distribution of $X_i$. The proof  (left to Appendix \ref{sec:A2}) uses a martingale inequality, combined with the observation that among all the entries of $A$ only the $k$-th column/row depend on $X_k$.

{\bf Step 2. Convergence of $\mathbb{E}m_{A}(z)$.} Observe that 
\begin{align*}
\mathbb{E}m_{A}(z) & =\mathbb{E}\frac{1}{n}\mathbf{Tr}(A-zI)^{-1}\\
 & =\mathbb{E}\frac{1}{n}\sum_{i=1}^{n}\left((A-zI)^{-1}\right)_{ii}\\
 & =\mathbb{E}\left((A-zI)^{-1}\right)_{nn},
\end{align*}
 where the last equality follows from that  the rows/columns of $A$ are exchangeable and so are those of $(A-zI)^{-1}$. We then need the following formula
\begin{equation}
((A-zI)^{-1})_{nn}=\frac{1}{(A_{nn}-z)-A_{\cdot,n}^{T}(A^{(n)}-zI_{n-1})^{-1}A_{\cdot,n}},
\label{eq:inv_nn}
\end{equation}
 where $A^{(n)}$ is the top left $(n-1)\times(n-1)$ minor of $A$, i.e.
the matrix $A$ is written in blocks as 
\[
A=\left[\begin{array}{cc}
A^{(n)} & A_{\cdot,n}\\
A_{\cdot,n}^{T} & A_{nn}
\end{array}\right],
\]
 and $I_{n-1}$ is the $(n-1)\times(n-1)$ identity matrix. 
 Notice that since $\Im(z) > 0$, both $A-zI$ and $A^{(n)}-zI_{n-1}$
are invertible. Formula (\ref{eq:inv_nn}) can be verified by elementary 
linear algebra manipulation. 

By Eq. (\ref{eq:inv_nn}) (recall that $A_{nn} = 0$ from Eq. (\ref{eq:kernalmatrixA})),
 \begin{equation}
\mathbb{E}m_{A}(z)=\mathbb{E}\left((A-zI)^{-1}\right)_{nn}=\mathbb{E}\frac{1}{-z-A_{\cdot,n}^{T}(A^{(n)}-zI_{n-1})^{-1}A_{\cdot,n}}.
\label{eq:EmA1}
\end{equation}
To proceed, we condition on the choice of $X_{n}$, and write
\begin{equation}
X_{i}=\eta_{i}(X_{n})_{0}+\tilde{X}_{i},\quad1\le i\le n-1,\label{eq:Xn_1}
\end{equation}
 where $(X_{n})_{0}=\frac{X_{n}}{|X_{n}|}$ is the unit vector in
the same direction of $X_{n}$, and $\tilde{X}_{i}$ lie in the $(p-1)$-dimensional subspace orthogonal to $X_{n}$. Due to the orthogonal invariance of the standard multivariate Gaussian distribution,
we know that $\eta_{i} \sim \mathcal{N}(0,p^{-1})$, $\tilde{X}_i \sim \mathcal{N}(0, p^{-1}I_{p-1})$, and they are independent.  Now we have 
\begin{equation}
X_{i}^{T}X_{n}=\eta_{i}|X_{n}|,\quad1\le i\le n-1,
\label{eq:Xn_2}
\end{equation}
 and 
\begin{equation}
X_{i}^{T}X_{j}=\eta_{i}\eta_{j}+\tilde{X}_{i}^{T}\tilde{X}_{j},\quad1\le i,j\le n-1,i\neq j.
\label{eq:Xn_3}
\end{equation}
Define $\eta=(\eta_{1},\cdots,\eta_{n-1})^{T}$, $D_{\eta}=diag\{\eta_{1}^{2},\cdots,\eta_{n-1}^{2}\}$ which is a diagonal matrix. Also, define 
\begin{equation}
\tilde{A}_{ij}^{(n)}=\begin{cases}
\tilde{X}_{i}^{T}\tilde{X}_{j}, & i\ne j,\\
0, & i=j,
\end{cases}\quad 1\le i,j\le n-1.
\label{eq:tildeA1}
\end{equation}
 Then 
\begin{align}
 A_{\cdot,n}^{T}(A^{(n)}-zI_{n-1})^{-1}A_{\cdot,n} 
 & =|X_{n}|^{2}\eta^{T}(\eta\eta^{T}-D_{\eta}+\tilde{A}^{(n)}-zI_{n-1})^{-1}\eta\nonumber \\
 & =|X_{n}|^{2}\cdot\frac{\eta^{T}(\tilde{A}^{(n)}-D_{\eta}-zI_{n-1})^{-1}\eta}{1+\eta^{T}(\tilde{A}^{(n)}-D_{\eta}-zI_{n-1})^{-1}\eta} \nonumber \\
 & =|X_{n}|^{2}\left(1-\frac{1}{1+\eta^{T}(\tilde{A}^{(n)}-D_{\eta}-zI_{n-1})^{-1}\eta}\right),
 \label{eq: A1denominator} 
\end{align}
 where to get the 2nd line we use the \textit{Sherman-Morrison} formula
\[
q^{T}(pq^{T}+M-zI)^{-1}=\frac{q^T(M-zI)^{-1}}{1+q^{T}(M-zI)^{-1}p}, \quad \forall p,q.
\]

By showing that the denominator in Eq. (\ref{eq: A1denominator}) is asymptotically concentrating at the value of $\mathbb{E}\tilde{m}(z)$, where  $\tilde{m}(z): =\frac{1}{n}\Tr(\tilde{A}^{(n)}-zI_{n-1})^{-1}$, we end up with
\[
\mathbb{E}\left|m_{A}(z)-\left(-z-\left(1-\left(1+\frac{1}{\gamma}\mathbb{E}\tilde{m}(z)\right)^{-1}\right)\right)^{-1}\right|  \to  0.
\]
The detailed derivation is left to Appendix (Lemma \ref{lemma:E|m_A-RHS(tildem)|}). Notice that the probability law of $\eta_{i}$ and $\tilde{X}_{i}^T\tilde{X}_{j}$ do not depend on the position of $X_{n}$, so we omit the conditioning on $X_{n}$ when computing the probabilities and expectations.  Furthermore, by Lemma \ref{bound:E|m_A-m_tildeA|}, 
\begin{equation}
\mathbb{E}|m_{A}(z)-\tilde{m}(z)| \to 0,
\label{eq:mtildem}
\end{equation}
thus 
\begin{equation}\label{eq:Etildemto0}
\mathbb{E}\tilde{m}(z)-\left(-z-\left(1-\left(1+\frac{1}{\gamma}\mathbb{E}\tilde{m}(z)\right)^{-1}\right)\right)^{-1} \to 0.
\end{equation}
Since the quadratic Eq. (\ref{eq:mz1}) has a unique solution $m_{I}(z)$ with positive imaginary part, Eq. (\ref{eq:Etildemto0}) means that 
\[
\mathbb{E}\tilde{m}(z)\rightarrow m_{I}(z).
\]
At last, by Eq. (\ref{eq:mtildem}),   $m_{I}(z)$ is the limit of $\mathbb{E}m_{A}(z)$. 
\end{proof}

\section{Random Inner-product Kernel Matrices}\label{sec:L2f}

\subsection{Model and Notations}\label{subsec:modelnotation}

Let $X_{1},\cdots,X_{n}$ be i.i.d random vectors in $\mathbb{R}^{p}$ 
and assume that $X_{i}\sim \mathcal{N}(0,p^{-1}I_{p})$. 
The random kernel  matrix $A$ is defined in Eqn. (\ref{eq:kernalmatrixA}) with the kernel function $f(\xi; p)$, and we define
\begin{equation}
k(x;p)=\sqrt{p}f(\frac{x}{\sqrt{p}};p).
\label{eq:definek(x)}
\end{equation}
In many cases of interest $f(\xi;p)$ does not depend on $p$, or the dependency is in the form of some rescaling or normalization. However, we formulate our result in a general form, keeping the dependency of $k(x;p)$ on $p$, and require $k(x;p)$ to satisfy certain conditions. We will see that those conditions are often satisfied in the cases of interest (Remark \ref{rk:k(x;p)=k(x)} and Remark \ref{rk:f(xi;p)=f(xi)}).

Let $X$ and $Y$ be two independent random vectors distributed as $\mathcal{N}(0,p^{-1}I_{p})$, and define $\xi_p = \sqrt{p} X^TY $. Denote the probability density of $\xi_p$ by $q_p(x)$, and the $L^2$ spaces  ${\cal H}_{p}=L^{2}(\mathbb{R},q_{p}(x)dx)$. Let $\{ P_{l,p}(x), l=0,1,\cdots \}$ be a set of orthonormal polynomials in ${\cal H}_{p}$, that is \[
\int_{\mathbb{R}} P_{l_1,p}(x)P_{l_2,p}(x)q_p(x) dx  =\delta_{l_1,l_2},
\]
where $\delta_{l,k}$ equals 1 when $l=k$ and 0 otherwise.
We define $P_{l,p}$ $(l\geq 0)$ using the Gram-Schmidt procedure on the monomials $\{1, x, x^2, \ldots\}$, so that $P_{0,p}=1$, $P_{1,p}=x$ (notice that $\mathbb{E}\xi_p^2=1$), and $P_{l,p}$ is a polynomial of degree $l$.  Notice that by the Central Limit Theorem, $\xi_{p} \to {\cal N}(0,1)$ in distribution as $p \to \infty$. We define ${\cal H}_{\mathcal{N}} =L^{2}(\mathbb{R}, q(x)dx)$ where $q(x)=\frac{1}{\sqrt{2\pi}}e^{-x^{2}/2}$. It can be shown (Lemma \ref{lemma:Plpmatchhl}) that for any finite degree $l$, the coefficients of the polynomial $P_{l,p}(x)$ converge to those of the normalized $l$-degree Hermite polynomial, the latter being an orthonormal basis of ${\cal H}_{\mathcal{N}} $. 
 
We formally expand $k(x; p )$ as 
\begin{equation}\label{eq:decompk(x)}
\begin{split}
k(x;p) & =\sum_{l=0}^{\infty} a_{l,p} P_{l,p}(x),\\
a_{l,p} & =\int_{\mathbb{R}} k(x;p) P_{l,p}(x) q_p(x)dx,
\end{split}
\end{equation}
and will later explain how to understand this formal expansion. 
Corresponding to the $l$-th term in Eqn. (\ref{eq:decompk(x)}), we define the random kernel matrix $A_l$ to be 
\begin{equation}
(A_l)_{ij}=\begin{cases}
f_l(X_{i}^{T}X_{j};p), & i\neq j,\\
0, & i=j,
\end{cases}
\label{eq:Aldef}
\end{equation}
where $f_l(\xi;p) = \frac{ a_{l,p} }{ \sqrt{p} } P_{l,p}(\sqrt{p}\xi)$. 

\subsection{Statement of the Main Theorem}\label{subsec:mainthm}

Our main result is stated in Thm. \ref{thm:main}, which establishes the weak convergence of the spectrum of random inner-product kernel matrices. The following conditions are required for $k(x;p)$:
\begin{enumerate}
\item {\bf (C.Variance)} 
For all $p$, $k(x;p) \in {\cal H}_p$, and as $p\to \infty$, $ Var(k(\xi_p;p))= \nu_p \to \nu$ which is a finite non-negative number. We also assume that $a_{0,p} = \mathbb{E}k(\xi_p;p) = 0$ (Remark \ref{rk:a0=0}).

\item {\bf (C.$p$-Uniform)} 
The expansion in Eqn. (\ref{eq:decompk(x)}) converges in $\mathcal{H}_p$  uniformly in $p$. Equivalently, let 
\begin{equation*}
k_{L}(x;p)=\sum_{l = 0}^{L} a_{l,p} P_{l,p}(x),
\end{equation*}
then for any $\epsilon > 0$, there exist $L$ and $p_0$ such that $\sum_{l=L+1}^\infty a_{l,p}^2 < \epsilon$ for $p>p_0$.

\item {\bf (C.$a_1$)}
As $p\to \infty$, $a_{1,p}\rightarrow a$ which is a constant. 
\end{enumerate}

\begin{remark}\label{rk:CVariance}
By condition  {\bf (C.Variance)}, the integrals in Eqn. (\ref{eq:decompk(x)}) are well-defined. The requirement  $\nu_p \to \nu$ can be fulfilled as long as $k(x;p) \in {\cal H}_p$ and is properly scaled. Notice that 
$ \nu_p  = Var(k(\xi_p;p)) = \sum_{l=1}^\infty a_{l,p}^2$,
thus in condition {\bf (C.$a_1$)}, $a^2 \leq \nu $.
\end{remark}


\begin{remark}\label{rk:k(x;p)=k(x)}
When $k(x;p)=k(x)$, and if 
(1) $k(x) \in {\cal H}_{\mathcal{N}}$, and $\mathbb{E}k(\zeta) = 0$ where $\zeta \sim \mathcal{N}(0,1)$,
and (2) $k(x)$ satisfies 
  \begin{equation}\label{eq:with|qp(x)-q(x)|}
   \int_{\mathbb{R}} k(x)^2 |q_p(x)- q(x)| dx \to 0, \quad p \to \infty,
   \end{equation}
then the three conditions are satisfied and  $\nu_p  \to  \nu_\mathcal{N} :=\mathbb{E}k(\zeta)^2 $, and $a_{1,p}\to a_\mathcal{N} := \mathbb{E}\zeta k(\zeta)$  (Lemma \ref{lemma:k(x;p)=k(x)}).
Eqn. (\ref{eq:with|qp(x)-q(x)|}) holds as long as the singularity in the integral, say at $x =\infty$ or $k(x) = \infty$, can be controlled $p$-uniformly. This is the case, for example, when $k(x)$ is bounded, or when $k(x)$ is bounded on $|x|\leq R$ for any $R>0$ and $k(x)^2$ is $p$-uniformly integrable at $x \to \infty$ (Lemma \ref{lemma:with|qp-q|}). It is also possible for  $k(x)$ to be unbounded. See Sec. \ref{subsec:numerical} for an example of $k(x)$ that diverges at $x=0$. 
\end{remark}

\begin{remark}\label{rk:f(xi;p)=f(xi)}
When $f(\xi,p) = f(\xi)$, the three conditions generally need to be checked for $k(x;p)$ case by case. For the special situation where $f(\xi)$ is $C^1$ at $\xi=0$, see Remark \ref{rk:smoothf}.
\end{remark}

\begin{theorem}[the limiting spectrum of random inner-product kernel matrices]\label{thm:main}
 Suppose that $X_1, \cdots, X_n \sim \mathcal{N}(0, p^{-1}I_p)$ are i.i.d., and $k(x;p)$ satisfies conditions {\bf (C.Variance)}, {\bf (C.$p$-Uniform)} and {\bf (C.$a_1$)}. Then, as $p,n\rightarrow\infty$ with $p/n=\gamma$, $ESD_{A}$ (the empirical spectral density of the random kernel matrix $A$, defined in Eqn. (\ref{eq:ESD})) converges weakly to a continuous probability measure on $\mathbb{R}$ in the almost sure sense. The Stieltjes transform of the limiting spectral density is the solution of the following algebraic equation
\begin{equation}
-\frac{1}{m(z)}=z + a \left(1-\frac{1}{1+\frac{a}{\gamma}m(z)}\right)+\frac{ \nu -a^2}{\gamma}m(z),
\label{eq:mzeqn}
\end{equation}
which is at most cubic, and involves three parameters: $\nu $ (defined in {\bf (C.Variance)}), $a$  (defined in {\bf (C.$a_1$)}) and $\gamma$. Eqn. (\ref{eq:mzeqn}) has a unique solution $m(z)$ with positive imaginary part (Lemma \ref{lemma:unique}), and the explicit formula of
\begin{equation}\label{eq:y(u)def}
y(u):=\lim_{v\to 0+} \Im(m(u+iv))
\end{equation} 
is given in Appendix \ref{sec:mzeqnsolution}.
\end{theorem}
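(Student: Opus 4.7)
The overall strategy mirrors the two-step scheme used for Thm \ref{thm:A1isMP}. Step~1, reducing the almost-sure convergence of $m_A(z)$ to the convergence of $\mathbb{E}m_A(z)$, goes through unchanged via Lemma \ref{lemma:m_A-Em_A}, which makes no assumption on the specific form of $f$. The content of Thm \ref{thm:main} is therefore contained in Step~2: deriving the fixed-point equation (\ref{eq:mzeqn}) for the limit of $\mathbb{E}m_A(z)$.

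The first task in Step~2 is to reduce from general $k(x;p)$ to its finite Hermite truncation $k_L(x;p)=\sum_{l=0}^L a_{l,p}P_{l,p}(x)$. Let $A^{(L)}$ be the random kernel matrix built from $k_L$ and set $R^{(L)}=A-A^{(L)}$. By orthonormality of $\{P_{l,p}\}$ in $\mathcal{H}_p$, $\mathbb{E}|R^{(L)}_{ij}|^2=p^{-1}\sum_{l>L}a_{l,p}^2$, which by {\bf (C.$p$-Uniform)} can be made at most $\epsilon/p$ uniformly in $p$ by taking $L$ large. A Hilbert--Schmidt estimate combined with the resolvent identity $(A-zI)^{-1}-(A^{(L)}-zI)^{-1}=-(A-zI)^{-1}R^{(L)}(A^{(L)}-zI)^{-1}$ then shows that $|\mathbb{E}m_A(z)-\mathbb{E}m_{A^{(L)}}(z)|$ is uniformly small in $p$ for $L$ large. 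Once (\ref{eq:mzeqn}) is established for the truncated kernel, continuity of the Stieltjes transform of the limit in the parameters $(\nu,a)$ and letting $L\to\infty$ delivers the full theorem.

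The core analysis is thus for a polynomial kernel of fixed degree $L$. I would follow the block-inversion computation in the proof of Thm \ref{thm:A1isMP}. After conditioning on $X_n$ and writing $X_i=\eta_i(X_n)_0+\tilde X_i$, the last column takes the clean form $(A_{\cdot,n})_i=\frac{1}{\sqrt p}k(\sqrt p\,\eta_i|X_n|;p)$. Decomposing via (\ref{eq:decompk(x)}) separates the degree-$1$ piece $a_{1,p}\eta$, which will reproduce the Mar\v{c}enko--Pastur term $a(1-(1+a\gamma^{-1}m)^{-1})$ in (\ref{eq:mzeqn}) exactly as in Thm \ref{thm:A1isMP}, from the higher-degree piece $w$ with $w_i=\sum_{l=2}^L\frac{a_{l,p}}{\sqrt p}P_{l,p}(\sqrt p\,\eta_i|X_n|)$. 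The $w_i$ are i.i.d.\ conditional on $X_n$, with mean zero and variance $p^{-1}\sum_{l\ge 2}a_{l,p}^2\to(\nu-a^2)/p$, and asymptotically orthogonal to $\eta_i$ in $\mathcal{H}_p$ by construction. Hence, for any matrix $M$ independent of $\eta$ with bounded operator norm, concentration of quadratic forms gives $w^TMw\approx\frac{\nu-a^2}{p}\Tr(M)$ while the cross terms $\eta^TMw$ are negligible. Identifying $\frac{1}{n}\Tr(A^{(n)}-zI)^{-1}$ with $m(z)$ and dividing by $\gamma$ yields the additional term $\frac{\nu-a^2}{\gamma}m(z)$ in (\ref{eq:mzeqn}).

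The main obstacle is executing the ``$M$ independent of $\eta$'' heuristic rigorously: the minor $A^{(n)}$ genuinely depends on $\eta$ through $X_i^TX_j=\eta_i\eta_j+\tilde X_i^T\tilde X_j$, and for nonlinear $P_{l,p}$ the expression $P_{l,p}(\sqrt p\,(\eta_i\eta_j+\tilde X_i^T\tilde X_j))$ does not factor. In the linear case this coupling was absorbed by the Sherman--Morrison identity applied to the rank-one piece $\eta\eta^T$; for general $l\ge 2$ one must expand $P_{l,p}$ and show that every cross-term arising from the binomial expansion is subleading after being aggregated into the relevant quadratic form. My plan is to first replace $A^{(n)}$ by the ``$\eta$-free'' matrix $\tilde A^{(n)}$ with entries $\frac{1}{\sqrt p}k(\sqrt p\,\tilde X_i^T\tilde X_j;p)$, at the cost of an $o(1)$ error in $\mathbb{E}m$, by exploiting $\sqrt p\,\eta_i\eta_j=\mathcal{O}(p^{-1/2})$ together with a $p$-uniform polynomial bound on $|P_{l,p}(x+\delta)-P_{l,p}(x)|$ for bounded-moment $x$. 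After this surgery, the Sherman--Morrison manipulation on the rank-one piece $a_{1,p}^2\eta\eta^T$ in $\tilde A^{(n)}+a_{1,p}^2\eta\eta^T-a_{1,p}^2D_\eta$ (as in (\ref{eq: A1denominator})) isolates the Mar\v{c}enko--Pastur contribution, and the quadratic form $w^T(\tilde A^{(n)}-zI)^{-1}w$ supplies the semicircle correction, combining to the cubic equation (\ref{eq:mzeqn}). Uniqueness of its solution with positive imaginary part (Lemma \ref{lemma:unique}) then pins down the limit.
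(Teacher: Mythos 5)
Your proposal follows the paper's route closely: Step~1 (reducing a.s.\ convergence to $\mathbb{E}m_A$) and the $L$-truncation via a Hilbert--Schmidt resolvent estimate are exactly what the paper does (Lemma~\ref{lemma:m_A-m_B}); the decomposition of the last column into the linear piece $a_{1,p}|X_n|\eta$ plus the higher-degree piece $w=f_{(2)}$, the Sherman--Morrison treatment of the rank-one term, and the identification $w^T(\cdot)w\approx\frac{\nu-a^2}{\gamma}m$ are also the paper's Step~2. (Minor typo: the rank-one piece inside $A^{(n)}$ is $a_{1,p}\eta\eta^T$, not $a_{1,p}^2\eta\eta^T$; the square appears only in the quadratic form $f_{(1)}^T(\cdot)f_{(1)}$.)

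There is, however, a genuine gap in the step you yourself flag as the main obstacle. You propose to replace $A^{(n)}$ by the $\eta$-free matrix $\tilde A^{(n)}$ by ``exploiting $\sqrt p\,\eta_i\eta_j=\mathcal{O}(p^{-1/2})$ together with a $p$-uniform polynomial bound on $|P_{l,p}(x+\delta)-P_{l,p}(x)|$''. This gives only an entrywise bound of order $p^{-1}$ on the first-order Taylor remainder, i.e.\ a Hilbert--Schmidt norm for the difference matrix of order $\mathcal{O}(1)$ — not $o(1)$ — since there are $\Theta(p^2)$ entries. An $\mathcal{O}(1)$ operator-norm perturbation cannot be discarded inside the resolvent. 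The paper resolves this by keeping the structure of the first-order correction: it is (up to small errors) $\sqrt p\,W\tilde F W$ with $W=\mathrm{diag}(\eta_i)$ and $\tilde F$ the kernel matrix built from $\sum_l a_{l,p}\sqrt l\,P_{l-1,p}$, so that $s(\sqrt p\,W\tilde F W)\lesssim M^2 p^{-1/2}\,s(\tilde F)$. The crude bound $s(\tilde F)=\mathcal{O}(\sqrt p)$ would only give $s(\sqrt p\,W\tilde F W)=\mathcal{O}(M^2)$, which when multiplied by $\|A_{\cdot,n}\|^2=\mathcal{O}(M^L)$ yields a polylog error that is not vanishing. One needs the sharper bound $\mathbb{E}s(\tilde F)\le\mathcal{O}(p^{1/4})$, which the paper proves by a fourth-moment argument (Lemma~\ref{lemma:Es(A)_4thmoment} together with Lemma~\ref{lemma:4thmoment_lge2}) that exploits the near-orthogonality of $P_{l,p}$ across a 4-cycle of indices — precisely the structure your pointwise approach does not see. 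Without this spectral-norm estimate on the ``derivative'' kernel matrix, the replacement $A^{(n)}\to\hat A^{(n)}$ in the resolvent cannot be justified.
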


\begin{remark} \label{rk:a0=0}
We assume $a_{0,p}=0$, since otherwise it results in adding to the kernel matrix a perturbation $\frac{1}{\sqrt{p}}a_{0,p}(\mathbf{1}_n\mathbf{1}_{n}^{T}-I_n)$, where $\mathbf{1}_{n}$ is the all-ones vector of length $n$ and $I_n$ is the identity matrix. The limiting spectral density of a sequence of Hermitian matrices with growing size ($n \rightarrow \infty$) is invariant to a  finite-rank perturbation (with rank that does not depend on $n$), see Thm. A.43 in \cite{bai}.
\end{remark}

\begin{remark}
Recall the definition of $A_l$ in Eqn. (\ref{eq:Aldef}). The limiting spectral density of $A_1$ is the M.P. distribution. For this case, $f(\xi;p)= a \xi$, or equivalently $k(x;p)= a x$, for some constant $a$. Then, the expansion in Eqn. (\ref{eq:decompk(x)}) has one term, $a_{1,p}= a$, $\nu_p = a^2$, and Eqn. (\ref{eq:mzeqn}) is reduced to Eqn. (\ref{eq:mz1}). 
\end{remark}
\begin{remark}\label{rk:Al2+}
The limiting spectral density of $A_{l}$ $(l\geq2$) is a semi-circle. Moreover, the limiting density of any partial sum (finite or infinite) of $A_2,A_3,\cdots$ is a semi-circle, whose squared radius equals the sum of the squared radii of the semi-circle of each $A_{l}$.
\end{remark}
\begin{remark}\label{rk:smoothf}
For random kernel matrices with locally smooth kernel functions, the limiting spectral density is the M.P. distribution. Specifically, if $f(\xi;p)=f(\xi)$, and is locally $C^1$ at $\xi=0$, one can show (Lemma \ref{lemma:fsmooth}) that the result in the theorem holds and $a^2 = \nu = (f'(0))^2$. In other words, the linear term in Eqn. (\ref{eq:decompk(x)}) determines the limiting spectral density, in agreement with the result in \cite{karoui10}.
\end{remark}

The proof of Thm. \ref{thm:main} is given in Section \ref{sec:proof}. Before presenting the proof, we analyze some examples of kernel functions numerically.

\subsection{Numerical Experiments}\label{subsec:numerical}

We compare the eigenvalue histogram and the theoretical limiting spectral density numerically. In the subsequent figures, the eigenvalues that produce the empirical histogram are computed by MATLAB's {\textsf eig} function and correspond to a single realization of the random kernel matrix. The ``theoretical curve" is calculated using the ``inversion formula" Eqn. (\ref{eq:inversionformula}) and Eqn. (\ref{eq:y(u)}), which is the expression for $y(u; a, \nu, \gamma)$ defined in Eqn. (\ref{eq:y(u)def}).

\subsubsection{Example: $ Sign(x) $}

\begin{figure}
\centering
\includegraphics[width = 0.45\linewidth]{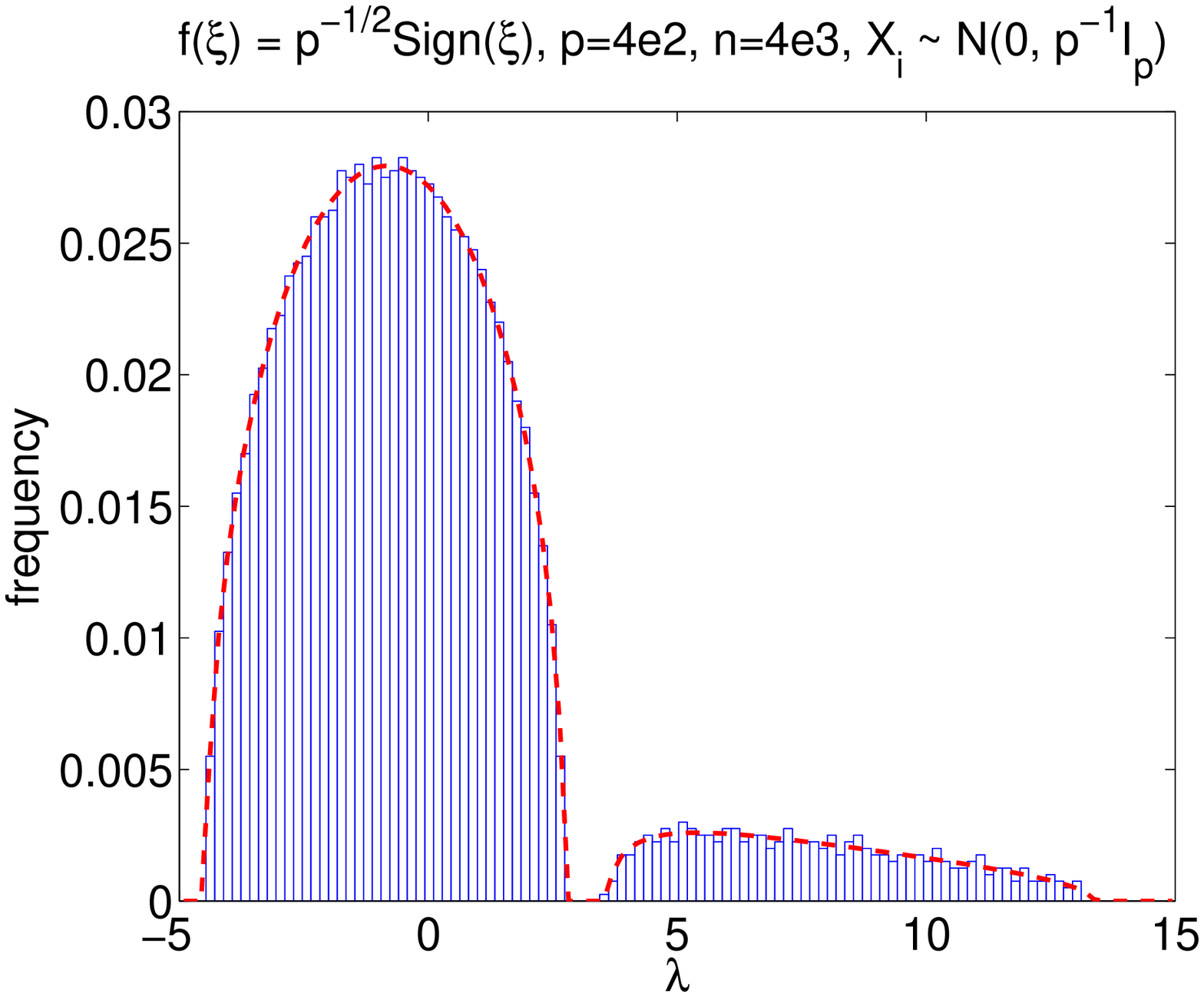}
\includegraphics[width = 0.45\linewidth]{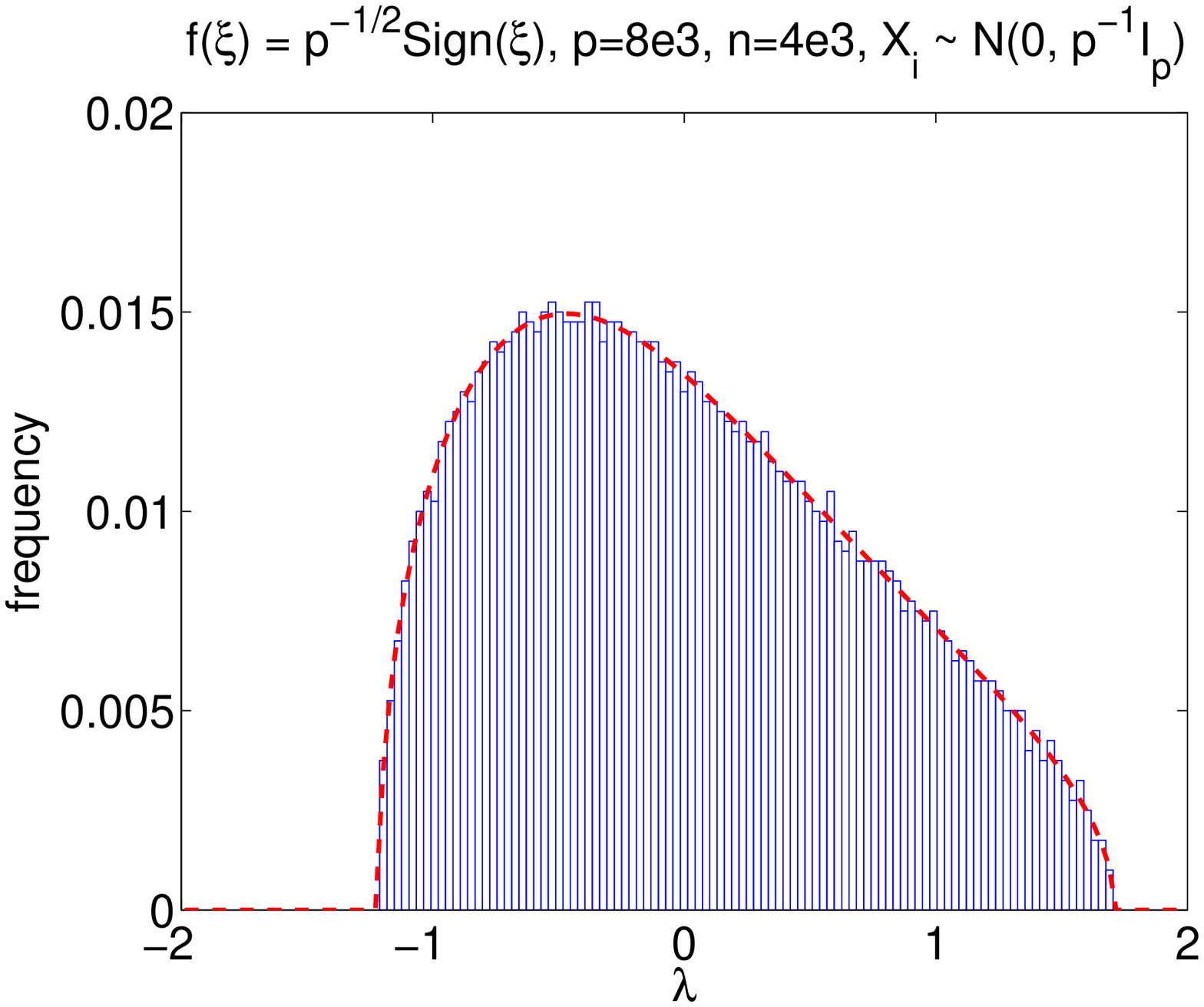}
\caption{\label{fig:signkernel_XiN}
Random kernel matrix with the Sign kernel,
and $X_i \sim \mathcal{N}(0, p^{-1}I_p)$.
(Left) $p = 4\times10^2$, $n = 4\times10^3$, $\gamma = p/n = 0.1$.
(Right) $p = 8\times 10^3$, $n =  4\times 10^3$, $\gamma = p/n = 2$.
The blue-boundary bars are the empirical eigenvalue histograms, and the red broken-line curves are the theoretical prediction of the eigenvalue densities by Thm. \ref{thm:main}. 
}
\end{figure}

As an example of a discontinuous kernel function, let
\[
k(x;p) = k(x) = Sign (x),
\]
where $Sign(x)$ is 1 when $x > 0 $ and -1 otherwise. Since $|k(x)| =1 $, $k(x)$ is bounded, and according to Remark \ref{rk:k(x;p)=k(x)}, by Lemma \ref{lemma:k(x;p)=k(x)} and Lemma \ref{lemma:with|qp-q|}, $k(x)$ satisfies conditions {\bf (C.Variance)}, {\bf (C.$p$-Uniform)} and {\bf (C.$a_1$)}. Meanwhile, $a = \mathbb{E}|\zeta|= \sqrt{2/\pi}$, and $\nu_p =1$ for all $p$, thus $\nu = 1$.

Fig. \ref{fig:signkernel_XiN} is for $X_i \sim \mathcal{N}(0, p^{-1}I_p)$. Notice that for the sign kernel, the two models $X_i \sim \mathcal{N}(0, p^{-1}I_p)$ and $X_i \sim \mathcal{U}(S^{p-1})$ result in the same probability law of the random kernel matrix. This is due to the fact that $Sign(X_i^TX_j) = Sign((X_i/|X_i|)^T(X_j/|X_j|))$ and that  if $X_i \sim \mathcal{N}(0, p^{-1}I_p)$ then $X_i/|X_i| \sim \mathcal{U}(S^{p-1})$. As such, the results for $X_i \sim \mathcal{U}(S^{p-1})$ are omitted.

The following serves as a motivation for the sign kernel matrix. Consider a network of $n$ ``subjects" represented by $X_1, \dots, X_n$ lying in $\mathbb{R}^p$. Subjects $i$ and $j$ have a friendship relationship if they are positively correlated, i.e., if $X_i^TX_j > 0$, and a non-friendship relationship if $X_i^TX_j < 0$. The off-diagonal entries of the  $n$-by-$n$ kernel matrix $A$ are all $\pm 1$ representing the friendship/non-friendship relationships. This model has the merit that if $i$ and $j$ are friends, and $j$ and $k$ are also friends, then chances are greater that $i$ and $k$ are also friends. When the $X_i$'s are i.i.d uniformly distributed on the unit sphere in $\mathbb{R}^p$ and $p$ is fixed, according to \cite{koltchinskii2000random}, as $n$ grows to infinity the top $p$ eigenvectors of the kernel matrix $A$ converge, up to a multiplying constant and a global rotation, to the coordinates of the $n$ data points. In this case, the eigen-decomposition of the sign kernel matrix recovers the positioning of the subjects in the whole community from their pairwise relationships. On the other hand, Thm. \ref{thm:main} covers the more realistic case of the ``large $p$, large $n$" regime.

\subsubsection{Example: $|x|^{-r} ~ (r < 1/2)$}

\begin{figure}
\centering
\includegraphics[width = 0.45\linewidth]{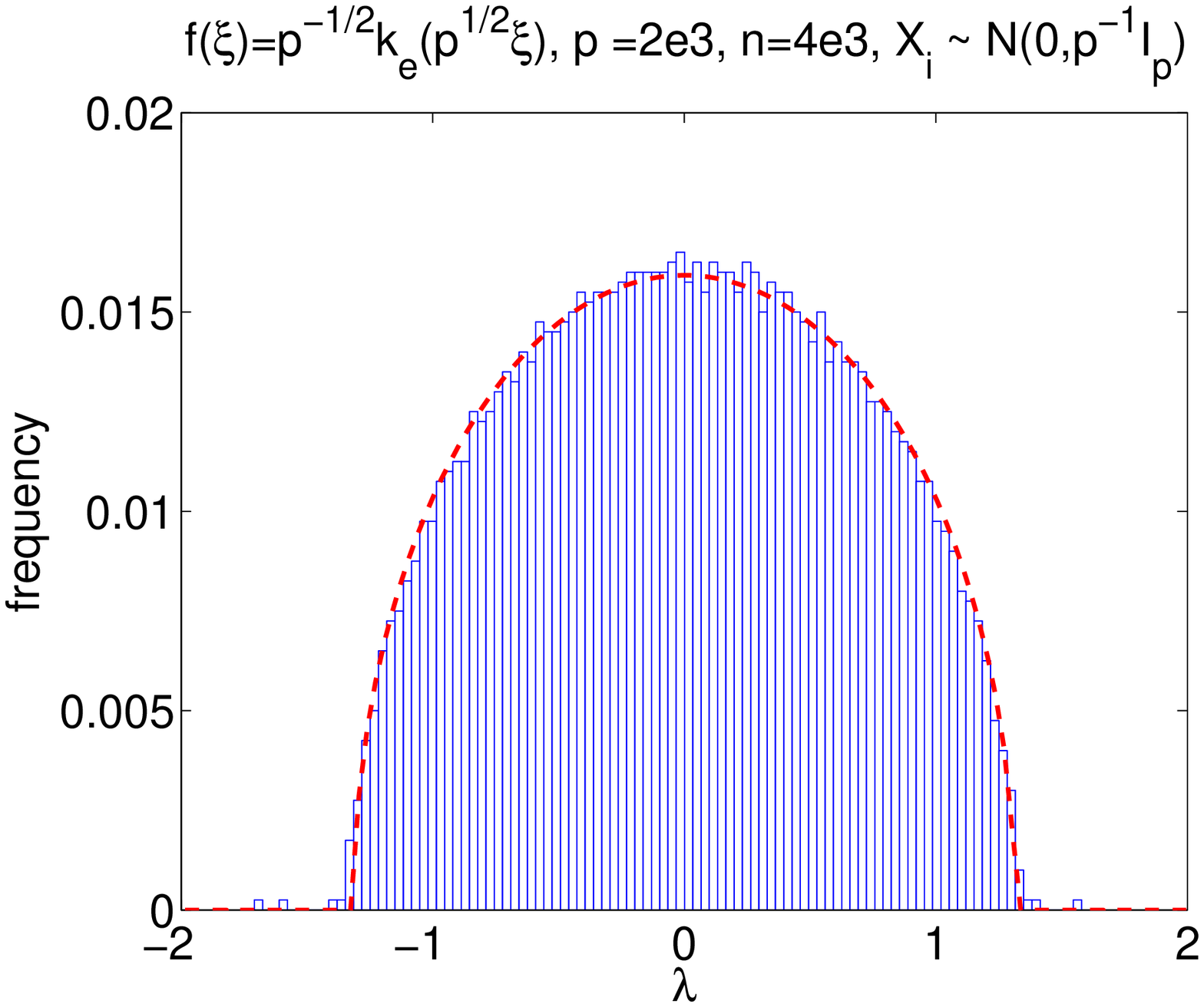}
\includegraphics[width = 0.45\linewidth]{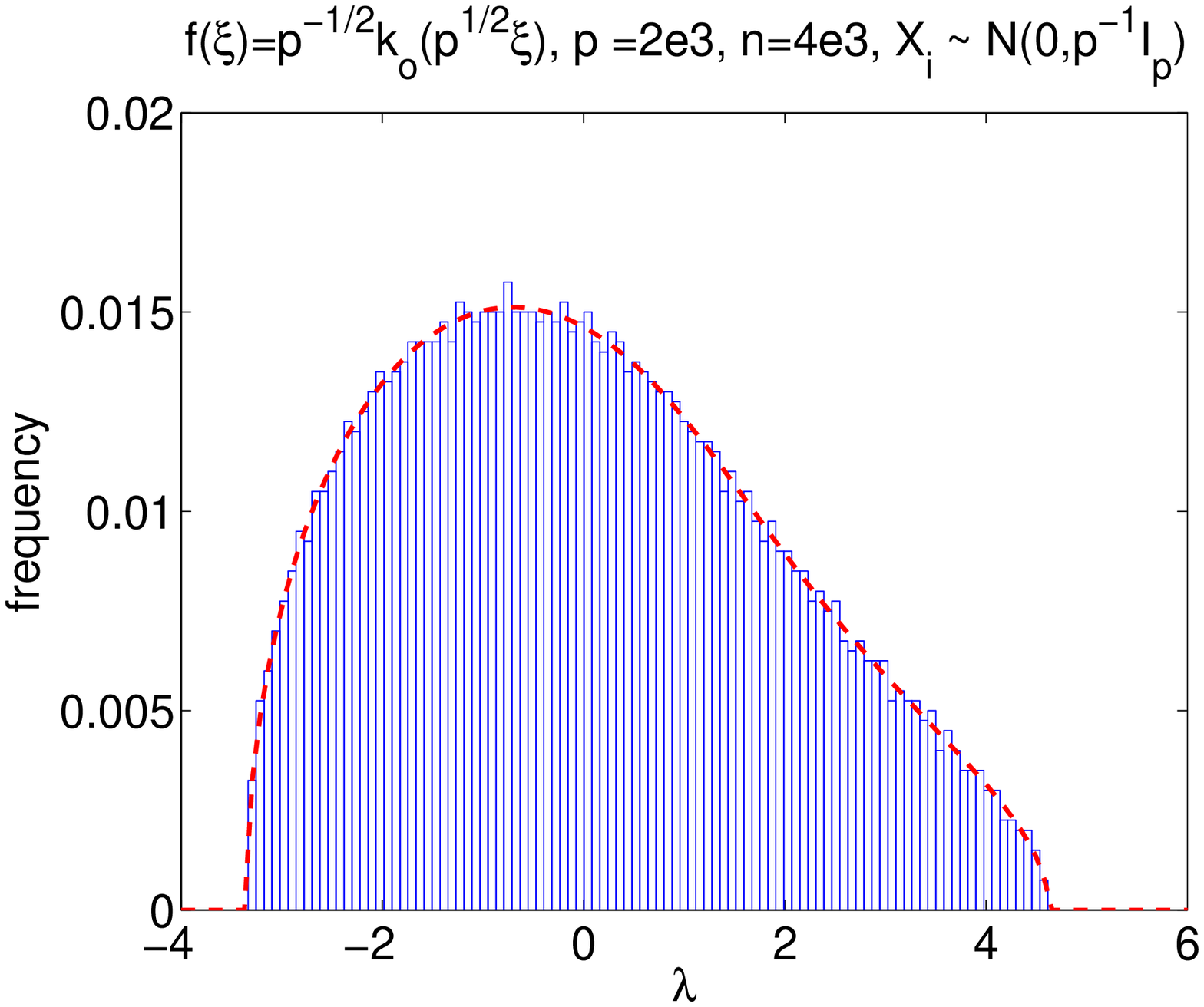}
\caption{\label{fig:divergekernel_XiN}
Random kernel matrix where $k(x) = k_e(x) = |x|^{-1/4} - \mathbb{E}|\zeta|^{-1/4}$ (left) and $k_o(x) = Sign(x)|x|^{-1/4}$ (right). $X_i \sim \mathcal{N}(0, p^{-1}I_p)$, and $p = 2\times10^3$, $n = 4\times10^3$, $\gamma = p/n = 0.5$. 
}
\end{figure}

As examples of unbounded kernel functions, we study the even function \[
k_e(x) = |x|^{-r} - \mathbb{E}|\zeta|^{-r} 
\] and the odd function \[
k_o(x) = Sign(x)|x|^{-r}, 
\] where $r < 1/2$ so as to guarantee the integrability of $k(x)^2$ at $x =0 $. 

Notice that for both cases, $|k(x)|$ is bounded on $\{ |x|> R\}$ for any $R > 0$, and diverge at $x =0$. Meanwhile, $k(x)^2 = |x|^{-2r}$ is integrable at $x=0$, and with the fact that $q_p(x) \leq q_p(0) \to q(0) = 1/\sqrt{2\pi}$, Eqn. (\ref{eq:with|qp(x)-q(x)|}) still holds.  Thus by Lemma \ref{lemma:k(x;p)=k(x)}, Thm. \ref{thm:main} applies to both $k_e$ and $k_o$. By
\[
\mathbb{E}|\zeta|^{-r}  = \sqrt{\frac{2}{\pi}}2^{-(r+1)/2}\Gamma(\frac{1-r}{2})
\] 
where $\Gamma(\cdot)$ is the Gamma function, and similarly for $\mathbb{E}|\zeta|^{-2r}$, the constants $\nu$ and $a$ for both $k_e$ and $k_o$ can be explicitly computed. For $k_e$, $\nu = Var(|\zeta|^{-r})$ and $a = 0$. For $k_o$, $\nu = |\zeta|^{-2r}$, and
\[
a  = \mathbb{E}|\zeta|^{1-r} = \sqrt{\frac{2}{\pi}}2^{-r/2}\Gamma(1-\frac{r}{2}). 
\]

The numerical results for $r=1/4$ with $X_i \sim \mathcal{N}(0, p^{-1}I_p)$ are shown in Fig. \ref{fig:divergekernel_XiN}. The empirical histograms  for $X_i \sim \mathcal{U}(S^{p-1})$ look almost identical and are therefore omitted. In the left panel of Fig. \ref{fig:divergekernel_XiN}, the empirical spectral density is close to a semi-circle, as our theory predicts. Notice that for $ r = 1/4$, the off-diagonal entries of the random kernel matrix do not have a 4th moment. However this does not contradict the ``Four Moment Theorem" for random matrices with i.i.d entries \cite{tao2011random} since the model for random kernel matrices is different.  

\section{Proof of the Main Theorem}\label{sec:proof}

The model and the notations are the same as in Sec. \ref{subsec:modelnotation}. The proof of Thm. \ref{thm:main} is provided in Sec. \ref{subsec:bigproof}. Prior to the proof, in Sec. \ref{subsec:hermite} we review some useful properties of Hermite polynomials, and in Sec. \ref{subsec:boundoftopeig} we introduce an asymptotic upper bound for the expected value of the spectral norm of random kernel matrices. The other model $X_{i}\sim\mathcal{U}(S^{p-1})$ is analyzed in Sec. \ref{subsec:XiS}, where it is shown that the result of Thm. \ref{thm:main} still holds.

\subsection{Orthonormal Polynomials}\label{subsec:hermite}

\subsubsection{${\cal H}_{\mathcal{N}}$ and normalized Hermite polynomials}
Define the normalized Hermite polynomials as 
\begin{equation}\label{eq:hl(x)}
h_{l}(x)=\frac{1}{\sqrt{l!}}H_{l}(x), \quad l=0,1,\cdots
\end{equation}
where $H_l(x)$ is the $l$-degree Hermite polynomial, satisfying
\[
 \int_{\mathbb{R}}H_{l_1}(x) H_{l_2}(x)q(x)dx =\delta_{l_1,l_2}\cdot l_1!.
\]
Thus, $\{h_l(x), l = 0,1,\cdots\}$ form an orthonormal basis of ${\cal H}_{\mathcal{N}}$.  The explicit formula of $H_l$ is \cite{abramowitz1964handbook}
\[
H_{l}(x)=l!\sum_{k=0}^{\lfloor l/2\rfloor}(-\frac{1}{2})^{k}\frac{1}{k!(l-2k)!}x^{l-2k}.
\] Also, the derivative of $H_{l}(x)$ satisfies the recurrence relation $ {H_{l}}' (x)=lH_{l-1}(x)$ for  $l \geq 1$, and as a result, 
\begin{equation}
{h_{l}}'(x)=\sqrt{l}h_{l-1}(x).
\label{eq:hermiteprime_h}
\end{equation}

\subsubsection{${\cal H}_p$ and $P_{l,p}(x)$}

Recall that the random variable $\xi_p$ converges in distribution to $\mathcal{N}(0,1)$ as $p \to \infty$. Meanwhile, the moments of $\xi_p$ approximate those of $\mathcal{N}(0,1)$: 
\begin{equation}
\mathbb{E}\xi_p^{k}=\begin{cases}
(k-1)!!+{\cal O}_{k}(1)p^{-1}, & k\:\text{even;}\\
0, & k\:\text{odd}.
\end{cases}\label{eq:Exi^k}
\end{equation} 
Eq. (\ref{eq:Exi^k}) is verified by directly computing the moments of $\xi_p$ using the model, i.e. $\xi_p = \sqrt{p} X^TY $ and  $X$ and $Y$ are independently distributed as $\mathcal{N}(0,p^{-1}I_{p})$. With the following lemma, Eq. (\ref{eq:Exi^k}) implies the asymptotic consistency between $P_{l,p}$ and $h_l$.

\begin{lemma}[convergence of $P_{l,p}$ to $h_l$]\label{lemma:Plpmatchhl}
Let $\{P_{l,p}, l=0,1,\cdots\}$ be the orthonormal polynomials of $L^2(\mathbb{R}, d\mu_p)$, where $\mu_p$ is a sequence of probability measures. Suppose that the moments of $\mu_p$ approximate those of $\mathcal{N}(0,1)$ in the sense that, for every fixed $k$,
\[
\int_{\mathbb{R}}x^k d\mu_p(x) = \mathbb{E}\zeta^k + \mathcal{O}_k(1)p^{-1}.
\] 
Then, for every fixed degree $l$, \[
P_{l,p}(x) = h_l(x) + \sum_{j=0}^l  (\delta_{l,p})_j x^j,
\]
where $(\delta_{l,p})_j$ satisfy 
\[
\max_{0\leq j \leq l} |(\delta_{l,p})_j| < \mathcal{O}_l(1)p^{-1}.
\]
\end{lemma}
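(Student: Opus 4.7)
The plan is to prove the lemma by induction on the degree $l$, tracking the coefficients through the Gram--Schmidt construction and comparing them term by term with the corresponding Hermite construction.

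Base case $l=0$: we have $P_{0,p}=1=h_0$ (and $P_{1,p}=x=h_1$ since $\int x\,d\mu_p = \mathcal{O}(p^{-1})$ and $\int x^2\,d\mu_p = 1+\mathcal{O}(p^{-1})$; the $l=1$ step is just a special case of the induction below). Inductive hypothesis: for each $j<l$, write $P_{j,p}(x)=\sum_{i=0}^{j} c_{j,i,p}\,x^i$ and $h_j(x)=\sum_{i=0}^{j}c_{j,i}^{(0)}x^i$, and assume $c_{j,i,p}=c_{j,i}^{(0)}+\mathcal{O}_j(1)p^{-1}$.

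For the induction step, I would compare the two Gram--Schmidt formulae
\[
Q_{l,p}(x)=x^l-\sum_{j=0}^{l-1}\alpha_{l,j,p}\,P_{j,p}(x),\qquad \tilde Q_l(x)=x^l-\sum_{j=0}^{l-1}\alpha_{l,j}^{(0)}\,h_j(x),
\]
where $\alpha_{l,j,p}=\int x^l P_{j,p}(x)\,d\mu_p(x)$ and $\alpha_{l,j}^{(0)}=\int x^l h_j(x)\,q(x)\,dx$. Since $H_l(x)=x^l+(\text{lower order})$ under the probabilists' normalization of the paper, $\tilde Q_l = \sqrt{l!}\,h_l$ and $\|\tilde Q_l\|_\mathcal{N}^2=l!$, a nonzero constant independent of $p$. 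First I would expand $\alpha_{l,j,p}=\sum_{i\le j}c_{j,i,p}\,\mathbb{E}_{\mu_p}[x^{l+i}]$, plug in the inductive hypothesis for $c_{j,i,p}$ and the hypothesis $\mathbb{E}_{\mu_p}[x^{l+i}]=\mathbb{E}\zeta^{l+i}+\mathcal{O}_{l+i}(1)p^{-1}$, and conclude $\alpha_{l,j,p}=\alpha_{l,j}^{(0)}+\mathcal{O}_l(1)p^{-1}$. Combined with the inductive hypothesis applied to each $P_{j,p}$, this yields that the coefficient of $x^i$ in $Q_{l,p}$ equals the coefficient of $x^i$ in $\tilde Q_l$ up to an $\mathcal{O}_l(1)p^{-1}$ error, for every $0\le i\le l$.

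Next I would handle the normalization. Using orthogonality $\langle Q_{l,p},P_{j,p}\rangle_p=0$ for $j<l$, we have $\|Q_{l,p}\|_p^2=\langle Q_{l,p},x^l\rangle_p$, which by the same moment-expansion argument equals $\|\tilde Q_l\|_\mathcal{N}^2+\mathcal{O}_l(1)p^{-1}=l!+\mathcal{O}_l(1)p^{-1}$. Since this is bounded away from zero for large $p$, taking square roots gives $\|Q_{l,p}\|_p=\sqrt{l!}+\mathcal{O}_l(1)p^{-1}$, and the reciprocal $1/\|Q_{l,p}\|_p = 1/\sqrt{l!}+\mathcal{O}_l(1)p^{-1}$. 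Writing $P_{l,p}=Q_{l,p}/\|Q_{l,p}\|_p$ and $h_l=\tilde Q_l/\sqrt{l!}$ and multiplying out, each coefficient of $P_{l,p}$ differs from the corresponding coefficient of $h_l$ by $\mathcal{O}_l(1)p^{-1}$, closing the induction.

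The main obstacle I anticipate is purely bookkeeping: making sure the constants hidden in $\mathcal{O}_l(1)$ genuinely depend only on $l$ (and the finitely many Gaussian moments up to order $2l$), rather than secretly blowing up when we invert $\|Q_{l,p}\|_p$ or when we propagate errors from $j<l$ into the step for $j=l$. Since the induction only involves finitely many fixed-degree polynomials and a $p$-independent positive lower bound $l!/2$ on $\|Q_{l,p}\|_p^2$ for $p$ large, this is routine but needs care; no analytic ingredient beyond elementary estimates is required.
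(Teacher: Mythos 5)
Your proposal is correct and fills in, via an explicit induction on degree through the Gram--Schmidt recursion, precisely the one-line sketch the paper gives (``the coefficients of the $l$-degree orthogonal polynomials are decided by up to the first $2l$ moments''). The error propagation through the projection coefficients $\alpha_{l,j,p}$, the identity $\|Q_{l,p}\|_p^2 = \langle Q_{l,p}, x^l\rangle_p$, and the normalization step are all handled correctly; this is the same argument the paper has in mind, just written out.
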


The proof of Lemma \ref{lemma:Plpmatchhl} follows from the fact that the coefficients of the $l$-degree orthogonal polynomials are decided by up to the first $2l$ moments. 

One consequence of Lemma \ref{lemma:Plpmatchhl} is that as $p \to \infty$ 
\begin{equation}\label{eq:boundPlp}
|P_{l,p}(x)|\le{\cal O}_{l}(1)M^{l}, \quad |x| \leq M,
\end{equation}
as the coefficients of $P_{l,p}(x)$ for each $l$ converge to those of $h_l(x)$. Also, Eq. (\ref{eq:hermiteprime_h}) leads to 
\begin{equation}\label{eq:Plpprime}
\begin{split}
P^{'}_{l,p}(x) &= \sqrt{l}P_{l-1,p}(x)+\mathcal{O}_l(1)M^{l-1}p^{-1}, \\
P^{''}_{l,p}(x) &= \sqrt{l(l-1)}P_{l-2,p}(x)+\mathcal{O}_l(1)M^{l-2}p^{-1}, 
\end{split}
\quad l \geq 2.
\end{equation}

Another consequence is the ``asymptotic consistency between the $P_{l,p}$-expansion and the Hermite-expansion" in their first finite-many terms (Lemma \ref{lemma:alptoalN}). This further implies that conditions {\bf (C.Variance)}, {\bf (C.$p$-Uniform)} and {\bf (C.$a_1$)} are satisfied by a large class of kernel functions (Remark \ref{rk:k(x;p)=k(x)}).

\subsection{ Spectral Norm Bound}
\label{subsec:boundoftopeig}

The following lemma gives an upper bound for the expectation of the spectral norm of random kernel matrices whose rescaled kernel function $k(x;p)$ is $P_{l,p}(x)$ (defined in Sec. \ref{subsec:hermite}) for some $l$. The method is by analyzing the 4th moment of the random matrix.

\begin{lemma}[bounding mean spectral norm by the 4th moment]\label{lemma:Es(A)_4thmoment} 
Let $A$ be the random kernel matrix defined in Eq. (\ref{eq:kernalmatrixA}) with the kernel function $f(\xi;p)= p^{-1/2}P_{l,p}(\sqrt{p}\xi)$, $l\ge 1$, where $P_{l,p}$ is defined as in Sec. \ref{subsec:hermite}. $X_i$'s are i.i.d. distributed as $\mathcal{N}(0, p^{-1}I_p)$. Then, as $p,n\rightarrow\infty$, $p/n=\gamma$, 
\[
\mathbb{E}s(A) \leq {\cal O}_{l, \gamma}(1) n^{1/4}.
\]
\end{lemma}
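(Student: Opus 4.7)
Since $A$ is symmetric, $s(A)^{4}\le\mathbf{Tr}(A^{4})$, and Jensen's inequality applied to $x\mapsto x^{4}$ gives $\mathbb{E}s(A)\le\bigl(\mathbb{E}\mathbf{Tr}(A^{4})\bigr)^{1/4}$, so it suffices to prove $\mathbb{E}\mathbf{Tr}(A^{4})\le \mathcal{O}_{l,\gamma}(1)\cdot n$. The plan is to expand
\[
\mathbb{E}\mathbf{Tr}(A^{4})=\sum_{i_{1},i_{2},i_{3},i_{4}=1}^{n}\mathbb{E}\bigl[A_{i_{1}i_{2}}A_{i_{2}i_{3}}A_{i_{3}i_{4}}A_{i_{4}i_{1}}\bigr]
\]
and classify the summands by cycle structure. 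The zero-diagonal convention $A_{ii}=0$ forces $i_{k+1}\neq i_{k}$, and among the remaining tuples the only vertex coincidences are $i_{1}=i_{3}$ and/or $i_{2}=i_{4}$; this yields three regimes depending on whether $\{i_{1},i_{2},i_{3},i_{4}\}$ has $2$, $3$, or $4$ distinct values.

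The two degenerate regimes are handled by direct moment bounds. In the $2$-distinct case ($i_{1}=i_{3}$, $i_{2}=i_{4}$) each summand equals $\mathbb{E}[A_{12}^{4}]=p^{-2}\mathbb{E}[P_{l,p}(\xi_{p})^{4}]$; Lemma \ref{lemma:Plpmatchhl} together with the moment convergence $\mathbb{E}\xi_{p}^{2k}\to(2k-1)!!$ gives $\mathbb{E}[P_{l,p}(\xi_{p})^{4}]=\mathcal{O}_{l}(1)$, so the total contribution over the $\mathcal{O}(n^{2})$ such tuples is $\mathcal{O}_{l,\gamma}(1)$. In the $3$-distinct case, every surviving summand reduces (by the symmetry $A_{ij}=A_{ji}$) to an expression of the form $\mathbb{E}[A_{ij}^{2}A_{ik}^{2}]$ for three distinct vertices $i,j,k$; conditioning on the repeated vertex $X_{i}$ makes the two factors independent, and using $\mathbb{E}[P_{l,p}(\xi_{p})^{2}]=1$ a second-moment calculation yields $\mathcal{O}_{l}(1)\cdot p^{-2}$ per term, so summing $\mathcal{O}(n^{3})$ tuples gives $\mathcal{O}_{l,\gamma}(1)\cdot n$.

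The main step is the $4$-distinct regime, where the approach is to extract cancellation from the orthogonality of $P_{l,p}$ by iterated conditioning. Condition on $X_{i_{2}},X_{i_{3}},X_{i_{4}}$: given $X_{i_{2}},X_{i_{4}}$, the pair $(\sqrt{p}X_{i_{1}}^{T}X_{i_{2}},\sqrt{p}X_{i_{1}}^{T}X_{i_{4}})$ is jointly Gaussian with variances $|X_{i_{2}}|^{2},|X_{i_{4}}|^{2}\approx 1$ and off-diagonal covariance $X_{i_{2}}^{T}X_{i_{4}}$. Replacing $P_{l,p}$ by $h_{l}$ modulo $\mathcal{O}(p^{-1})$ coefficient errors (Lemma \ref{lemma:Plpmatchhl}) and applying the classical Hermite identity $\mathbb{E}[h_{l}(U)h_{l}(V)]=\rho^{l}$ for jointly $\mathcal{N}(0,1)$ pairs $(U,V)$ of correlation $\rho$, the $X_{i_{1}}$-expectation reduces to $(X_{i_{2}}^{T}X_{i_{4}})^{l}$ to leading order. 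An identical reduction on the $X_{i_{3}}$-integration produces a second factor $(X_{i_{2}}^{T}X_{i_{4}})^{l}$, so the full cyclic expectation is approximately $p^{-2}\mathbb{E}\bigl[(X_{i_{2}}^{T}X_{i_{4}})^{2l}\bigr]=\mathcal{O}_{l}(1)\cdot p^{-2-l}$, using that $X_{i_{2}}^{T}X_{i_{4}}$ has variance $p^{-1}$ and bounded higher moments. Summing $\mathcal{O}(n^{4})$ tuples yields $\mathcal{O}_{l,\gamma}(1)\cdot n^{2-l}$, which is $\mathcal{O}(n)$ for $l=1$ and $\mathcal{O}(1)$ for $l\ge 2$; combining all three regimes closes the proof.

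The main technical obstacle is making the Hermite cancellation in the $4$-distinct regime rigorous. The conditional variances $|X_{i_{2}}|^{2},|X_{i_{4}}|^{2}$ equal $1$ only up to $\mathcal{O}(p^{-1/2})$ fluctuations, so applying $\mathbb{E}[h_{l}(U)h_{l}(V)]=\rho^{l}$ requires first rescaling to unit-variance Gaussians $\tilde{U}=U/|X_{i_{2}}|,\tilde{V}=V/|X_{i_{4}}|$ and expanding $h_{l}(|X_{i_{2}}|\tilde{U})$ into lower-degree Hermite components, whose cross-terms must be shown to be subleading. Likewise, the discrepancy $P_{l,p}-h_{l}$ from Lemma \ref{lemma:Plpmatchhl} yields correction integrals that must be bounded against the polynomial growth of $P_{l,p}$. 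Uniform moment bounds on $|X_{i}|^{2k}$ and standard Gaussian moment estimates should suffice, but tracking each source of error through the nested conditionings is the bulk of the work.
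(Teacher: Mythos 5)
Your proof follows the paper's overall strategy exactly for the global reduction and the degenerate cases, but takes a genuinely different route in the crucial four-distinct-vertex regime. The global step — $\mathbb{E}s(A)\le\bigl(\mathbb{E}\mathbf{Tr}(A^{4})\bigr)^{1/4}$ and classification of cyclic tuples by number of distinct vertices — coincides with Eq.~(\ref{eq:Es(A)4thmoment}) and the three cases in the paper. Your $2$-distinct and $3$-distinct estimates (both $\mathcal{O}_{l}(1)p^{-2}$ per term) are essentially the paper's cases 1 and 2, though the paper's claimed equality $\mathbb{E}[A_{ij}^{2}A_{il}^{2}]=p^{-2}(\mathbb{E}P_{l,p}(\sqrt{p}\xi_{12})^{2})^{2}$ is really an $\mathcal{O}_{l}(1)p^{-2}$ bound via conditioning on $X_{i}$, which is exactly what you do.

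The divergence is in the $4$-distinct case. The paper (Lemma~\ref{lemma:4thmoment_lge2}) conditions on $X_{1}$, writes $\xi_{23}=\eta_{2}\eta_{3}+\tilde\xi_{23}$, $\xi_{34}=\eta_{3}\eta_{4}+\tilde\xi_{34}$, Taylor-expands $P_{l,p}$ in the $\sqrt{p}\,\eta_{i}\eta_{j}$ perturbation, and extracts cancellation from $\mathbb{E}\eta_{3}=0$ together with the asymptotic orthogonality of $P_{l,p}$ and $x$ for $l\ge 2$, landing on $\mathcal{O}_{l}(1)p^{-2}$ for the product of the four $P_{l,p}$'s, hence $\mathcal{O}_{l}(1)p^{-4}$ per tuple. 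You instead condition on $X_{2},X_{4}$, factorize the conditional expectation over the independent $X_{1}$ and $X_{3}$, and apply the Mehler identity $\mathbb{E}[h_{m}(\tilde U)h_{n}(\tilde V)]=\delta_{mn}\rho^{n}$ after normalizing $\tilde U=U/|X_{2}|$, $\tilde V=V/|X_{4}|$. This gives the sharper estimate $\mathcal{O}_{l}(1)p^{-2-l}$ per tuple, which for $l\ge 2$ improves on the paper's bound; both suffice for the lemma. Your route is in fact the Gaussian-model analogue of the argument the paper gives in Sec.~\ref{subsec:XiS} for the $\mathcal{U}(S^{p-1})$ model, where the zonal-harmonic reproducing property yields the analogous $\mathcal{O}_{l}(1)p^{-l}$ bound.

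One inaccuracy to fix. When you expand $P_{l,p}(|X_{2}|\tilde U)=\sum_{m}c_{m}(|X_{2}|,p)h_{m}(\tilde U)$, the scaling coefficients satisfy $c_{l-2k}=\mathcal{O}_{l}(1)\bigl(\bigl||X_{2}|^{2}-1\bigr|^{k}+p^{-1}\bigr)$, so the term $c_{l-2k}^{(2)}c_{l-2k}^{(4)}\rho_{24}^{\,l-2k}$ is typically of size $\mathcal{O}(p^{-k/2})\cdot\mathcal{O}(p^{-k/2})\cdot\mathcal{O}(p^{-(l-2k)/2})=\mathcal{O}(p^{-l/2})$ — the \emph{same} order as the leading $m=l$ term, not subleading as you assert. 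These corrections do not spoil the power of $p$ (the final estimate is still $\mathcal{O}_{l}(1)p^{-2-l}$), but you cannot dismiss them as lower order; you need to bound each of the $\mathcal{O}(l)$ diagonal Mehler contributions individually via moment bounds on $(|X_{2}|^{2}-1)^{k}$ and $\rho_{24}^{m}$ and then absorb them into the $\mathcal{O}_{l}(1)$ constant. With that correction the argument is sound, and the nested-conditioning book-keeping you flag as the remaining work is comparable in weight to the (also only sketched) Taylor-expansion book-keeping in the paper's Lemma~\ref{lemma:4thmoment_lge2}.
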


\begin{remark}\label{rk:s(A)isO(1)}
 We are aware of the existence of significant literature on the spectral norm of random matrices. The asymptotic concentration of the largest eigenvalue at its mean value is quantified by the Tracy-Widom Law for Gaussian ensembles (see, e.g. \cite[Chapter 3]{Anderson2010book}) and a large class of Wigner-type matrices (see \cite{soshnikov1999universality}, \cite{tao2010uptotheedge} and references therein), as well as Wishart-type matrices (Remark \ref{rk:A1_2}). For random kernel matrices, $s(A_l)$ is conjectured to be ${\cal O}(1)$, and see more in Sec. \ref{sec:discussion}. However, the bound provided by Lemma \ref{lemma:Es(A)_4thmoment}, though not tight, is sufficient for the proof of our main theorem.  
\end{remark}

\begin{proof}[Proof of Lemma \ref{lemma:Es(A)_4thmoment} ] 
Let $\{\lambda_{i},1\le i\le n\}$ be the eigenvalues of $A$. Since 
\[
s(A)^{4}\le\sum_{i=1}^{n}\lambda_{i}^{4}={\bf Tr}(A^{4})
= \sum_{i,j,k,l}A_{ij}A_{jk}A_{kl}A_{li},
\]
we have
\begin{equation}
\mathbb{E}s(A)\le(\mathbb{E}s(A)^{4})^{1/4}
\le(\sum_{i,j,k,l}\mathbb{E}A_{ij}A_{jk}A_{kl}A_{li})^{1/4}.
\label{eq:Es(A)4thmoment}
\end{equation}
 We observe that for $\mathbb{E}A_{ij}A_{jk}A_{kl}A_{li}$ to be non-zero, in $\{i,j,k,l\}$  neighboring indices must differ since $A_{ii}=0$. Then we have the following cases: 
 \begin{enumerate}

 \item $i=k,j=l$: $\mathbb{E}A_{ij}A_{jk}A_{kl}A_{li} = \mathbb{E}A_{ij}^4 = p^{-2} \mathbb{E} P_{l,p}(\sqrt{p}\xi_{12})^4 = {\cal O}_l(1)p^{-2}$, where the last equality is due to Eqn. (\ref{eq:Exi^k}) and Lemma \ref{lemma:Plpmatchhl}.
 
 \item $i=k,j\ne l$ or $i\ne k,j=l$: $\mathbb{E}A_{ij}A_{jk}A_{kl}A_{li} = p^{-2} (\mathbb{E} P_{l,p}(\sqrt{p}\xi_{12})^2)^2 = (1+{\cal O}_l(1)p^{-1})^2p^{-2} = {\cal O}_l(1)p^{-2}$. 
 
 \item $i\ne k,l\ne j$: when $l = 1$, $\mathbb{E}A_{ij}A_{jk}A_{kl}A_{li} = p^{-3}$. When $l \ge 2$, we have the following estimate (Lemma \ref{lemma:4thmoment_lge2})
\[
 \mathbb{E}A_{ij}A_{jk}A_{kl}A_{li} = {\cal O}_l(1) p^{-4}. 
\]
\end{enumerate} 
Thus, when $l = 1$,
\[
\begin{split}
& \sum_{i,j,k,l}\mathbb{E}A_{ij}A_{jk}A_{kl}A_{li}\\
& \le n^{2}{\cal O}(1)p^{-2}+2n^{3} {\cal O}(1)p^{-2}+n^{4}p^{-3}\\
& = {\cal O}_{\gamma}(1)n+{\cal O}_{\gamma}(1),
\end{split}
\]
and when $l \ge 2$,
\[
\begin{split}
& \sum_{i,j,k,l}\mathbb{E}A_{ij}A_{jk}A_{kl}A_{li}\\
& \le n^{2}{\cal O}_l(1)p^{-2}+2n^{3} {\cal O}_l(1)p^{-2}+n^{4}{\cal O}_l(1)p^{-4}\\
& = {\cal O}_{l,\gamma}(1) n+{\cal O}_{l,\gamma}(1).
\end{split}
\]
Combining the above estimates with Eq. (\ref{eq:Es(A)4thmoment}) leads to the bound wanted.  
\end{proof}

\subsection{Proof of Thm. \ref{thm:main}}\label{subsec:bigproof}

\begin{proof}[Proof of Thm. \ref{thm:main}]

Same as in Sec. \ref{subsec:A1isMP}, it suffices to show the mean convergence of the Stieltjes transform. Specifically, we want to show that for a fixed $z = u+ iv$, $\mathbb{E}m_A(z)$ converges to the unique solution of Eq. (\ref{eq:mzeqn}). Recall that the expansion Eq. (\ref{eq:decompk(x)}) converges $p$-uniformly in ${\cal H}_p$, and we first reduce the general case to that where the expansion has finite many terms. 

{\bf Step 1. Reduction to the case of finite expansion up to order $L$.}

Denote the truncated kernel function up to finite order $L$ by $f_L(\xi;p)= p^{-1/2}k_L(\sqrt{p}\xi;p)$ where (recall that $a_{0,p} =0$ by Remark \ref{rk:a0=0}) 
\[
k_{L}(x;p)=\sum_{l = 1}^{L} a_{l,p} P_{l,p}(x).
\] Let $m_A(z)$ and $m_L(z)$ be the Stieltjes transforms of the random kernel matrix with the kernel function $f(\xi;p)$ and $f_L(\xi;p)$, respectively.   
For a fixed $z$, define 
\begin{equation}\label{eq:RHSdef}
RHS(m;  a ,\nu)=\left(-z-a\left(1-\frac{1}{1+\frac{a}{\gamma}m}\right)-\frac{\nu-a^2}{\gamma}m\right)^{-1}.
\end{equation}
The goal is to show that, as $p, n \to \infty$ with $p/n = \gamma$, $\mathbb{E}m_A$ converges to the solution of Eq. (\ref{eq:mzeqn}) which can be rewritten as $m = RHS(m; a,\nu)$, and it suffices to show that 
\begin{equation}\label{eq:mA-RHS(mA)}
|\mathbb{E}m_A - RHS(\mathbb{E}m_A; a,\nu) | \to 0.
\end{equation}
We need the following lemma, whose proof is left to Appendix \ref{sec:A4}: 

\begin{lemma}[stability of the Stieltjes transform to $L^2$ perturbation in the kernel function]\label{lemma:m_A-m_B} 
Suppose that $X_i$ $(i = 1, \cdots, n)$ are i.i.d random vectors, and the two functions  $f_{A}(\xi;p)$ and  $f_{B}(\xi;p)$ satisfy that with large $p$
\[
\mathbb{E}(f_{A}(X^{T}Y;p)-f_{B}(X^{T}Y;p))^{2}\le\epsilon p^{-1},
\]
where $X$ and $Y$ are two independent random vectors distributed in the same way as $X_i$'s, and $\epsilon$ is some positive constant.
Let $A$ be the $n$-by-$n$ random kernel matrix with the
kernel function $f_{A}(\xi;p)$, and $B$ with $f_{B}(\xi;p)$. Also, let $m_A$ and $m_B$ be the Stieltjes Transforms of $A$ and $B$ respectively. Then  for a fixed $z$,
\[
\mathbb{E}|m_{A}(z)-m_{B}(z)|\le{\cal O}(1)\sqrt{\epsilon}.
\]
\end{lemma}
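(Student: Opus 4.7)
The plan is to apply the resolvent identity to express $m_A-m_B$ in terms of the perturbation $E := A-B$, and then to bound the resulting trace using the Hilbert--Schmidt Cauchy--Schwarz inequality together with the uniform resolvent bound $\|(A-zI)^{-1}\|_{op},\,\|(B-zI)^{-1}\|_{op}\le 1/v$, where $v=\Im(z)>0$. Taking expectation at the end will convert the hypothesis on the $L^2$ distance of the kernels into the desired bound on $\mathbb{E}|m_A-m_B|$.

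First I would write
\begin{equation*}
(A-zI)^{-1}-(B-zI)^{-1} = -(A-zI)^{-1}E(B-zI)^{-1},
\end{equation*}
so that
\begin{equation*}
m_A(z)-m_B(z) = -\frac{1}{n}\,\mathbf{Tr}\bigl[(A-zI)^{-1}E(B-zI)^{-1}\bigr]
= -\frac{1}{n}\,\mathbf{Tr}\bigl[E\,(B-zI)^{-1}(A-zI)^{-1}\bigr].
\end{equation*}
Next, by the Hilbert--Schmidt Cauchy--Schwarz inequality $|\mathbf{Tr}(MN)|\le \|M\|_{HS}\|N\|_{HS}$ and the submultiplicativity $\|MN\|_{HS}\le\|M\|_{op}\|N\|_{HS}$,
\begin{equation*}
\bigl|m_A(z)-m_B(z)\bigr|
\le \frac{1}{n}\,\|E\|_{HS}\,\|(B-zI)^{-1}(A-zI)^{-1}\|_{HS}
\le \frac{1}{n}\,\|E\|_{HS}\,\frac{1}{v}\,\frac{\sqrt{n}}{v}
= \frac{\|E\|_{HS}}{v^{2}\sqrt{n}},
\end{equation*}
where I used $\|(B-zI)^{-1}\|_{op}\le 1/v$ and $\|(A-zI)^{-1}\|_{HS}^{2}=\sum_i|\lambda_i(A)-z|^{-2}\le n/v^{2}$.

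Finally I would take expectation. Since $E_{ii}=0$ and, for $i\neq j$, $X_i$ and $X_j$ are independent with the same law as $X,Y$ in the hypothesis,
\begin{equation*}
\mathbb{E}\|E\|_{HS}^{2}
=\sum_{i\neq j}\mathbb{E}\bigl(f_A(X_i^{T}X_j;p)-f_B(X_i^{T}X_j;p)\bigr)^{2}
\le n(n-1)\,\epsilon p^{-1}\le n^{2}\epsilon/p.
\end{equation*}
Jensen's inequality then gives $\mathbb{E}\|E\|_{HS}\le n\sqrt{\epsilon/p}$, and combining with the deterministic bound above,
\begin{equation*}
\mathbb{E}|m_A(z)-m_B(z)|
\le \frac{1}{v^{2}\sqrt{n}}\,\mathbb{E}\|E\|_{HS}
\le \frac{1}{v^{2}}\sqrt{\frac{n}{p}}\sqrt{\epsilon}
= \frac{\sqrt{\epsilon}}{v^{2}\sqrt{\gamma}},
\end{equation*}
which is $\mathcal{O}(1)\sqrt{\epsilon}$ since $z$ (hence $v$) and $\gamma$ are fixed.

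There is no real obstacle here; the only delicate point is choosing the correct pairing of operator versus Hilbert--Schmidt norms so that, after dividing by $n$ and taking expectation, the factor $\sqrt{n/p}$ remains $\mathcal{O}(1)$ while the $L^2$ kernel bound $\epsilon/p$ produces the desired $\sqrt{\epsilon}$ (and not a smaller or larger power of $n$). The argument is otherwise a one-shot resolvent identity plus Cauchy--Schwarz.
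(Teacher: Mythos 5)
Your proof is correct and follows essentially the same route as the paper's: the resolvent identity, Hilbert--Schmidt Cauchy--Schwarz paired with the uniform resolvent operator-norm bound $1/v$, and then the entrywise second-moment hypothesis on the kernel difference. The only cosmetic difference is that you apply Jensen's inequality to $\|E\|_{HS}$, whereas the paper bounds $\mathbb{E}|m_A-m_B|^2$ directly by $\mathcal{O}(1)\epsilon$ and (implicitly) applies Jensen at the very end.
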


By condition  {\bf (C.$p$-Uniform)}, for arbitrary $\epsilon>0$, there exists some $L = L(\epsilon)$, so that 
$ \mathbb{E}(k(\xi_p;p)-k_{L(\epsilon)}(\xi_p;p))^{2}\leq \epsilon^{2}$ for all $p$, and then 
\[
\mathbb{E}(f(X^{T}Y;p)-f_{L(\epsilon)}(X^{T}Y;p))^{2}\le\epsilon^{2}p^{-1}.
\]
By Lemma \ref{lemma:m_A-m_B}, 
\[
|\mathbb{E}m_{A}(z) - \mathbb{E}m_{L(\epsilon)}(z)|\le\mathbb{E}|m_{A}(z)-m_{L(\epsilon)}(z)|\le{\cal O}(1)\epsilon.
\]
If in addition we can show that, for any fixed $L$ and some sequence of $a_L(p)$ and $\nu_L(p)$,
\begin{equation}\label{eq:step2goal}
\begin{split}
& |\mathbb{E} m_L - RHS(\mathbb{E}m_L; a_L(p), \nu_L(p)) | \to 0,\\
& a_L(p) \to a, \quad \nu_L(p) \to \nu,
\end{split}
\end{equation}
then Eq. (\ref{eq:mA-RHS(mA)}) holds asymptotically.
 
{\bf Step 2. Convergence of $\mathbb{E}m_{L}(z)$ for finite $L$.}

With slight abuse of notation, we denote the random kernel matrix with kernel function $f_L(\xi;p)$ by $A$. Its Stieltjes transform is denoted by $m_L(z)$. In what follows we sometimes drop the dependence on $p$ and write $f_L(\xi;p)$ as $f_L(\xi)$, and similar for other functions. 

Recall that 
\begin{equation}\label{eq:inv(A-z)nn}
\begin{split}
\mathbb{E}m_L(z) 
& =\mathbb{E}\left((A-zI)^{-1}\right)_{nn}\\
& =\mathbb{E}(-z-A_{\cdot,n}^{T}(A^{(n)}-zI_{n-1})^{-1}A_{\cdot,n})^{-1}.
\end{split}
\end{equation}
 Notations as in Eq. (\ref{eq:Xn_1}, \ref{eq:Xn_2}, \ref{eq:Xn_3}), we have 
 \begin{equation}\label{eq:f(1)f(2)}
\begin{split}
A_{\cdot,n}  & =  f_{(1)}+f_{(2)}, \\
f_{(1)} & := a_{1,p}|X_{n}|\eta, \\
f_{(2)} & := (f_{>1}(\xi_{1n}),\cdots,f_{>1}(\xi_{n-1,n}))^{T},
\end{split}
\end{equation}
where $\xi_{in}=|X_{n}|\eta_{i}$ for $1\le i\le n-1$, $\eta : = (\eta_1, \cdots, \eta_{n-1})^T$, and $f_{>1}(\xi) := f_L(\xi) - a_{1,p}\xi$. 
The off-diagonal entries of $A^{(n)}$ are 
\[
A^{(n)}_{ij}=f_L(X_{i}^{T}X_{j})=f_L(\eta_{i}\eta_{j}+\tilde{\xi}_{ij}),\quad1\le i,j\le n-1,i\ne j,
\]
where $\tilde{\xi}_{ij}=\tilde{X}_{i}^{T}\tilde{X}_{j}$.

The typical magnitude of $\eta_{i}$ and $\tilde{\xi}_{ij}$ is $p^{-1/2}$, and specifically, we have the large probability set $\Omega_{\delta}$ defined as
\begin{equation}
\Omega_{\delta}=\{|\eta_{i}|<\delta,|\tilde{\xi}_{ij}|<\delta,\left||X_{n}|^{2}-1\right|<\sqrt{2}\delta,1\le i,j\le n-1,i\neq j\},
\label{eq:def.Omega_delta}
\end{equation}
where $\delta=\frac{M}{\sqrt{p}}$, $M=\sqrt{20\ln p}$.
By Lemma \ref{pf:omega_delta_p-9}, $ \Pr(\Omega_{\delta}^{c})\le{\cal O}(1)p^{-7}$. On $\Omega_{\delta}$, 
\begin{align*}
 f_{L}(\eta_{i}\eta_{j}+\tilde{\xi}_{ij})
  & =a_{1,p} \eta_{i}\eta_{j}+a_{1,p}\tilde{\xi}_{ij} \\
  & \quad + f_{>1}(\tilde{\xi}_{ij})+f_{>1}^{'}(\tilde{\xi}_{ij})\eta_{i}\eta_{j}+t_{ij},
\end{align*}
where 
\[
t_{ij}=\frac{1}{2}f_{>1}^{''}(\theta_{ij})(\eta_{i}\eta_{j})^{2}.
\]
Recall that $f_{>1}(\xi) = \frac{1}{\sqrt{p}}\sum_{l=2}^L a_{l,p}P_{l,p}(\sqrt{p}\xi)$, and by Eq. (\ref{eq:Plpprime}), 
\begin{equation}\label{eq:f(2)prime}
f_{>1}^{'}(\xi) = \sum_{l=2}^L a_{l,p}(\sqrt{l}P_{l-1,p}(\sqrt{p}\xi) + \mathcal{O}_l(1)M^{l-1}p^{-1}),
\end{equation}
and 
\begin{equation}\label{eq:f(2)doubleprime}
f_{>1}^{''}(\xi) = \sqrt{p}\sum_{l=2}^L a_{l,p}(\sqrt{l(l-1)}P_{l-2,p}(\sqrt{p}\xi) + \mathcal{O}_l(1)M^{l-2}p^{-1}).
\end{equation}
We define 
\[
\begin{split}
\tilde{A}_{ij}^{(n)} & =a_{1,p}\tilde{\xi}_{ij}+f_{>1}(\tilde{\xi}_{ij}), \\
\tilde{F}_{ij} & = \frac{1}{\sqrt{p}} \sum_{l=2}^L a_{l,p} \sqrt{l}P_{l-1,p}(\sqrt{p}\tilde{\xi}_{ij}),
\end{split}
\quad i\neq j,
\]
and set the diagonal entries to be zeros for both $\tilde{A}^{(n)}$ and $\tilde{F}$, then
\[
A^{(n)}=\tilde{A}^{(n)}+ a_{1,p}(\eta\eta^{T}-D_{\eta})+\sqrt{p}W\tilde{F}W + T,
\]
where $T$ is Hermitian with $T_{ij}=t_{ij} + \frac{1}{\sqrt{p}} f_{>1}^{'}(\tilde{\xi}_{ij}) - \tilde{F}_{ij}$, and $W= \text{diag}\{\eta_{1},\cdots,\eta_{n-1}\}$. 
We have (recall that $\sum_{l=1}^L a_{l,p}^2$ is bounded by some $\mathcal{O}(1)$ constant for all $p $, by Remark \ref{rk:CVariance})
\begin{enumerate}
\item Since $\theta_{ij}$ is between $\tilde{\xi}_{ij}$ and $\tilde{\xi}_{ij}+\eta_{i}\eta_{j}$, and both $\tilde{\xi}_{ij}$ and $\eta_{i}$ are bounded in magnitude by $\delta=p^{-1/2}M$, then $|\theta_{ij}|\leq\delta+\delta^{2}\leq1.01\delta=p^{-1/2}1.01M$. Thus, by Eq. (\ref{eq:f(2)doubleprime}, \ref{eq:boundPlp}), $|f_{>1}^{''}(\theta_{ij})|\leq \sqrt{p}\mathcal{O}_L(1)M^{L-2}$, and then $|t_{ij}|\le{\cal O}_{L}(1)M^{L+2}p^{-3/2}$. Together with Eq. (\ref{eq:f(2)prime}), 
\begin{align}
|T_{ij}|
& \le {\cal O}_{L}(1)M^{L+2}p^{-3/2} + {\cal O}_{L}(1)M^{L-1}p^{-3/2} \nonumber \\
& = {\cal O}_{L}(1)M^{L+2}p^{-3/2}. 
\label{eq:TaylorBoundT}
\end{align}
As a result,
\begin{align}
s(T-a_{1,p}D_{\eta})\cdot\mathbf{1}_{\Omega_{\delta}} 
& \le s(T)\cdot\mathbf{1}_{\Omega_{\delta}}  + |a_{1,p}|\delta \nonumber \\
& = {\cal O}_{L}(1)M^{L+2}p^{-1/2}+{\cal O}(1)Mp^{-1/2} \nonumber \\
& ={\cal O}_{L}(1)M^{L+2}p^{-1/2}. \label{eq:s(T-aD)}
\end{align}

\item 
$\tilde{F} $ can be written as $\sum_{l=1}^{L-1} a_{l,p}\sqrt{l}\tilde{F}_l$, where Lemma \ref{lemma:Es(A)_4thmoment} applies to each $\tilde{F}_l$, and the coefficients $a_{l,p}$ for $1 \leq l \leq L-1$ are uniformly bounded by some constant since $\sum_{l=1}^L a_{l,p}^2 = \nu_p \to \nu$ by Condition {\bf (C.Variance)}. Thus we have
\begin{equation}
\mathbb{E}s(\tilde{F})\le \sum_{l=2}^L \sqrt{l} {\cal O}_l(1) p^{1/4}= {\cal O}_{L}(1)p^{1/4}, 
\label{eq:TaylorBoundtildeF}
\end{equation} 
and as a result,
\begin{equation}
\mathbb{E}s(\sqrt{p}W\tilde{F}W)\cdot\mathbf{1}_{\Omega_{\delta}}\le M^{2}p^{-1/2}\mathbb{E}s(\tilde{F})\leq{\cal O}_{L}(1)M^{2}p^{-1/4}.
\label{eq:Es(sqrtpWtildeFW)}
\end{equation}
 \end{enumerate}
 
Now we break the quantity $A_{\cdot,n}^{T}(A^{(n)}-zI_{n-1})^{-1}A_{\cdot,n}$ into the following pieces: define $\hat{A}^{(n)}=a_{1,p}\eta\eta^{T}+\tilde{A}^{(n)}$, and recall that $A_{\cdot,n}= f_{(1)}+f_{(2)}$ as defined in Eq. (\ref{eq:f(1)f(2)}), 
\begin{align}
   A_{\cdot, n}^{T}(A^{(n)}-zI_{n-1})^{-1}A_{\cdot, n} 
= & A_{\cdot, n}^{T}(\hat{A}^{(n)}-zI_{n-1})^{-1}A_{\cdot, n}-A_{\cdot, n}^{T}(A^{(n)}-zI_{n-1})^{-1} \nonumber \\
 & \cdot(\sqrt{p}W\tilde{F}W+T -a_{1,p}D_{\eta})(\hat{A}^{(n)}-zI_{n-1})^{-1}A_{\cdot, n} \nonumber \\
= & f_{(1)}^{T}(\hat{A}^{(n)}-zI_{n-1})^{-1}f_{(1)} \nonumber \\ 
  &  +f_{(2)}^{T}(\hat{A}^{(n)}-zI_{n-1})^{-1}f_{(2)}+r_{2}-r_{1} 
 \label{eq:step2_(1)(2)r2r1}
\end{align}
where 
\begin{equation}\label{eq:step2_r1r2}
\begin{split}
r_{2} & =2f_{(1)}^{T}(\hat{A}^{(n)}-zI_{n-1})^{-1}f_{(2)},\\
r_{1} & =A_{\cdot, n}^{T}(A^{(n)}-zI_{n-1})^{-1}(\sqrt{p}W\tilde{F}W+T-a_{1,p}D_{\eta}) \\
      & \quad \cdot(\hat{A}^{(n)}-zI_{n-1})^{-1}A_{\cdot, n}.
\end{split}
\end{equation}
For $r_2$,
\begin{align}
r_{2} 
 & =2a_{1,p}|X_{n}|\eta^{T}(\hat{A}^{(n)}-zI_{n-1})^{-1}f_{(2)} \nonumber\\
 & =2a_{1,p}f_{(2)}^{T}(\hat{A}^{(n)}-zI_{n-1})^{-1}(|X_{n}|\eta)\nonumber \\
 & =2a_{1,p} \{f_{(2)}^{T}(\tilde{A}^{(n)}-zI_{n-1})^{-1}(|X_{n}|\eta) \nonumber\\
 & \quad - f_{(2)}^{T}(\tilde{A}^{(n)}-zI_{n-1})^{-1}a_{1,p}\eta\eta^{T}(\hat{A}^{(n)}-zI_{n-1})^{-1}(|X_{n}|\eta) \}\nonumber \\
 & :=2a_{1,p}(r_{2,1}-r_{2,2}),\label{eq:step_2r21r22}
\end{align}
and by moment method we can show that (Lemma \ref{lemma:step2_E|r_2|})
\begin{equation}
\mathbb{E}|r_{2}|\cdot\mathbf{1}_{\Omega_{\delta}}\le{\cal O}_{L}(1)M^{2}p^{-1/2}.
\label{eq:step2_r2}
\end{equation}
To bound $r_{1}$, we restrict ourselves to $\Omega_{\delta}$ where $||A_{\cdot, n}||^{2}=\sum_{i=1}^{n-1}f_{L}(\xi_{in})^{2}\le{\cal O}_{L}(1)M^{L}$,
and with Eq. (\ref{eq:Es(sqrtpWtildeFW)}, \ref{eq:s(T-aD)}) 
\begin{align}
 \mathbb{E}|r_{1}|\cdot\mathbf{1}_{\Omega_{\delta}}
& \le\mathbb{E}(s(\sqrt{p}W\tilde{F}W)+s(T-a_{1,p}D_{\eta}))||A_{\cdot, n}||^{2}\cdot\mathbf{1}_{\Omega_{\delta}} \nonumber \\
 & \le{\cal O}_{L}(1)M^{L}\mathbb{E}(s(\sqrt{p}W\tilde{F}W)+s(T-a_{1,p}D_{\eta})) \nonumber\\
 & ={\cal O}_{L}(1)M^{L}({\cal O}_{L}(1)M^{2}p^{-1/4}+{\cal O}_{L}(1)M^{L+2}p^{-1/2})\nonumber \\
 & ={\cal O}_{L}(1)M^{2L+2}p^{-1/4} \label{eq:step2_r1}.
\end{align}

Furthermore, as in Sec. \ref{subsec:A1isMP}, we can compute the first term in Eq. (\ref{eq:step2_(1)(2)r2r1}): 
\begin{align*}
 f_{(1)}^{T}(\hat{A}^{(n)}-zI_{n-1})^{-1}f_{(1)} 
& =|X_{n}|^{2}a_{1,p}^{2}\eta^{T}(\hat{A}^{(n)}-zI_{n-1})^{-1}\eta\\
 & =|X_{n}|^{2}a_{1,p}\left(1-(1+a_{1,p}\eta^{T}(\tilde{A}^{(n)}-zI_{n-1})^{-1}\eta)^{-1}\right)\\
 & =|X_{n}|^{2}a_{1,p}\left(1-(1+a_{1,p}(\gamma^{-1}\mathbb{E}\tilde{m}(z)+\gamma^{-1}\tilde{r}+r_{(1),2}))^{-1}\right),
\end{align*}
where $\tilde{m}(z) = \frac{1}{n-1}\Tr(\tilde{A}^{(n)}-zI_{n-1})^{-1}$, and
\begin{enumerate}
\item $\tilde{r}=\tilde{m}(z)-\mathbb{E}\tilde{m}(z)$, $\mathbb{E}|\tilde{r}|\le{\cal O}(1)n^{-1/2}$ by Lemma \ref{lemma:m_A-Em_A};
\item The term
\[
r_{(1),2}=\eta^{T}(\tilde{A}^{(n)}-zI_{n-1})^{-1}\eta-\frac{1}{p}\mathbf{Tr}(\tilde{A}^{(n)}-zI_{n-1})^{-1}
\]
is similar to $r_{2}$ in Lemma \ref{lemma:E|m_A-RHS(tildem)|} and satisfies
$ \mathbb{E}|r_{(1),2}|\le{\cal O}(1)p^{-1/2}$. 
\end{enumerate}
Going through a process similar to that in Lemma \ref{lemma:E|m_A-RHS(tildem)|} to bound the denominators, including 
\begin{enumerate}
\item
introducing a large probability set 
\[
\Omega_{(1)}:=\{|\tilde{r}|\le p^{-1/4},|r_{(1),2}|\le p^{-1/4}\},\quad\Pr(\Omega_{(1)}^{c})\le{\cal O}(1)p^{-1/4},
\]
so as to bound $|(1+a_{1,p}\gamma^{-1}\mathbb{E}\tilde{m}(z))^{-1}|$
on $\Omega_{\delta}\cap\Omega_{(1)}$ by ${\cal O}(1)M^{2}$, 
\item 
making use of that $|(1+a_{1,p}\eta^{T}(\tilde{A}^{(n)}-zI_{n-1})^{-1}\eta)^{-1}|$ on $\Omega_{\delta}$ is bounded by ${\cal O}(1)M^{2}$,
\end{enumerate} 
we have 
\begin{equation}
f_{(1)}^{T}(\hat{A}^{(n)}-zI_{n-1})^{-1}f_{(1)}=a_{1,p}\left(1-(1+\frac{a_{1,p}}{\gamma}\mathbb{E}\tilde{m}(z))^{-1}\right)+r_{(1)},
\label{eq:step2_(1)_1}
\end{equation}
where 
\begin{equation}
\mathbb{E}|r_{(1)}|\cdot\mathbf{1}_{\Omega_{\delta}\cap\Omega_{(1)}}\le{\cal O}(1)M^{4}p^{-1/2}.
\label{eq:step2_(1)_2}
\end{equation}

We turn to compute the second term in Eq. (\ref{eq:step2_(1)(2)r2r1}). We have
\begin{align}
 f_{(2)}^{T}(\hat{A}^{(n)}-zI_{n-1})^{-1}f_{(2)} 
& =f_{(2)}^{T}(\tilde{A}^{(n)}-zI_{n-1})^{-1}f_{(2)}^{T}\nonumber \\
 & \quad - f_{(2)}^{T}(\tilde{A}^{(n)}-zI_{n-1})^{-1}a_{1,p}\eta\eta^{T}(\hat{A}^{(n)}-zI_{n-1})^{-1}f_{(2)}\nonumber \\
 & =\frac{\nu_{>1,p}}{\gamma}\mathbb{E}\tilde{m}(z)+\frac{\nu_{>1,p}}{\gamma}\tilde{r}+r_{(2),2}-r_{(2),3}\label{eq:step2_(2)}
\end{align}
where \[
\nu_{>1,p} = \mathbb{E}(f_{(2)})_i^2 = \mathbb{E} f_{>1}(\xi_{in})^2 = \nu_p - a_{1,p}^2,
\] and 
\begin{align*}
r_{(2),2} & =f_{(2)}^{T}(\tilde{A}^{(n)}-zI_{n-1})^{-1}f_{(2)}^{T}-\frac{\nu_{>1,p}}{p}\mathbf{Tr}(\tilde{A}^{(n)}-zI_{n-1})^{-1},\\
r_{(2),3} & =f_{(2)}^{T}(\tilde{A}^{(n)}-zI_{n-1})^{-1}a_{1,p}\eta\eta^{T}(\hat{A}^{(n)}-zI_{n-1})^{-1}f_{(2)}\\
 & =a_{1,p}(\eta^{T}(\hat{A}^{(n)}-zI_{n-1})^{-1}f_{(2)})r_{2,1}.
\end{align*}
For $r_{(2),2}$, by a moment method argument similar to the first part in the proof of Lemma \ref{lemma:step2_E|r_2|}, we have
\begin{equation}
\mathbb{E}|r_{(2),2}|\le{\cal O}_{L}(1)p^{-1/2}.
\label{eq:step2_(2)_(2)2}
\end{equation}
To bound $r_{(2),3}$, we restrict ourselves to $\Omega_{\delta}$, where 
\[
|f_{(2)}(\xi_{in})|\le{\cal O}_{L}(1)M^{L}p^{-1/2},\quad|\eta_{i}|\le Mp^{-1/2},\quad1\le i\le n-1,
\]
thus 
\begin{align*}
  |a_{1,p}\eta^{T}(\hat{A}^{(n)}-zI_{n-1})^{-1}f_{(2)}|\cdot\mathbf{1}_{\Omega_{\delta}} 
  & \le {\cal O}(1) s((\hat{A}^{(n)}-zI_{n-1})^{-1})||\eta||\cdot||f_{(2)}||\\
 & \le\frac{ {\cal O}(1)}{v}\sqrt{{\cal O}(1)M^{2}}\sqrt{{\cal O}_{L}(1)M^{2L}}={\cal O}_{L}(1)M^{L+1},
\end{align*}
and then 
\begin{align}
\mathbb{E}|r_{(2),3}|\cdot\mathbf{1}_{\Omega_{\delta}} & =\mathbb{E}|r_{2,1}||a_{1,p}(\eta^{T}(\hat{A}^{(n)}-zI_{n-1})^{-1}f_{(2)})|\cdot\mathbf{1}_{\Omega_{\delta}}\nonumber \\
 & \le{\cal O}_{L}(1)M^{L+1}\mathbb{E}|r_{2,1}|\nonumber \\
 & \le{\cal O}_{L}(1)M^{L+1}p^{-1/2}.
 \label{eq:step2_(2)_(2)3}
\end{align}
Now puting Eq. (\ref{eq:step2_(1)(2)r2r1},\ref{eq:step2_r1},\ref{eq:step2_r2},\ref{eq:step2_(1)_1},\ref{eq:step2_(1)_2},\ref{eq:step2_(2)},\ref{eq:step2_(2)_(2)2},\ref{eq:step2_(2)_(2)3})
together, we have 
\begin{align}
  \mathbb{E} & \left|m_L(z)  -\left(-z-a_{1,p}\left(1-(1+\frac{a_{1,p}}{\gamma}\mathbb{E}\tilde{m}(z))^{-1}\right)-\frac{\nu_{>1,p}}{\gamma}\mathbb{E}\tilde{m}(z)\right)^{-1}\right|\cdot\mathbf{1}_{\Omega_{\delta}\cap\Omega_{(1)}} \nonumber \\
 & \le\frac{2}{v}\mathbb{E}(|r_{1}|+|r_{2}|+|r_{(1)}|+|\nu_{>1,p}\gamma^{-1}\tilde{r}|+|r_{(2),2}|+|r_{(2),3}|)\cdot\mathbf{1}_{\Omega_{\delta}\cap\Omega_{(1)}} \nonumber \\
 & \le{\cal O}_{L}(1)M^{2L+2}p^{-1/4}+{\cal O}_{L}(1)M^{2}p^{-1/2}+{\cal O}(1)M^{4}p^{-1/2} \nonumber \\
 & \quad + {\cal O}(1)n^{-1/2}+{\cal O}_{L}(1)p^{-1/2}+{\cal O}_{L}(1)M^{L+1}p^{-1/2} \nonumber \\
 & ={\cal O}_{L}(1)M^{2L+2}p^{-1/4}\rightarrow 0. \label{eq:limitEmL1}
\end{align}
Meanwhile, similar to the proof of Lemma \ref{bound:E|m_A-m_tildeA|} (making use of the fact that $\mathbb{E}s(\sqrt{p}W\tilde{F}W+T)\cdot\mathbf{1}_{\Omega_{\delta}} \le {\cal O}_L(1)M^2 p^{-1/4}$ and the inequality that $\mathbf{Tr}(AB)\le n\cdot s(A)s(B)$ for $n$-by-$n$ Hermitian matrices $A$ and $B$ ), it can be shown that
\[
\mathbb{E}|m_L(z)-\tilde{m}(z)| \to 0.
\]
With Eq. (\ref{eq:limitEmL1}), we have (dropping the dependence on $z$) 
\[ 
| \mathbb{E}\tilde{m} - RHS(\mathbb{E}\tilde{m}; a_{1,p}, \nu_p) | \to 0,
\] and thus \[
| \mathbb{E}m_L - RHS(\mathbb{E} m_L; a_{1,p}, \nu_p) | \to 0.
\]
At last, by condition {\bf (C.Variance)} and {\bf (C.$a_1$)}, $a_{1,p} \to a$ and $\nu_p \to \nu$. Thus Eq. (\ref{eq:step2goal}) is verified if we set $a_L(p) = a_{1,p}$ and $\nu_L(p) = \nu_p$.
\end{proof}

\subsection{Model $X_{i}\sim\mathcal{U}(S^{p-1})$}\label{subsec:XiS}

We also consider the model where the random vectors $X_i$'s are i.i.d. uniformly distributed on a high-dimensional sphere. For this model, the marginal distribution of the inner-product $\xi_{ij} = X_i^TX_j$ has probability density $Q^{'}_p(u) = A_{p}(1-u^{2})^{(p-3)/2}$, where $A_{p}$ is a normalization constant. Let $\xi^{'}_p $ have the same distribution as $\sqrt{p}\xi_{ij}$, whose probability density is $q^{'}_p(x) = \frac{1}{\sqrt{p}}Q^{'}_p(\frac{x}{\sqrt{p}})$, and let ${\cal H}^{'}_p = L^2(\mathbb{R}, q^{'}_p(x)dx)$. By Lemma \ref{lemma:kmoment_S^p-1},
\[
\mathbb{E}(\xi^{'}_p)^{k}=\begin{cases}
(k-1)!!+{\cal O}_{k}(1)p^{-1}, & k\:\text{even;}\\
0, & k\:\text{odd},
\end{cases}
\]
which echos Eq. (\ref{eq:Exi^k}). As a result, by Lemma \ref{lemma:Plpmatchhl}, the orthonormal polynomials of ${\cal H}^{'}_p$ are asymptotically consistent with the Hermite polynomials. If we expand $k(x;p)$ into the orthonormal polynomials of ${\cal H}^{'}_p$, and require the conditions {\bf (C.Variance)}, {\bf (C.$p$-Uniform)} and {\bf (C.$a_1$)} accordingly, the result in Thm. \ref{thm:main} still holds. 

One way of showing this is sketched as follows: 

Condition on the draw of $X_{n}$, and without loss of generality let $X_{n}=(1,0,\cdots,0)^{T}$. Then 
\[
X_{i}=(u_{i},\sqrt{1-u_{i}^{2}}\tilde{X}_{i}^{T})^{T},\quad1\le i\leq n-1,
\]
where $u_{i}$'s are i.i.d distributed, and $\tilde{X}_{i}$'s are i.i.d. uniformly distributed on the unit sphere in $\mathbb{R}^{p-1}$ independently from $u_{i}$'s. As a result, let $\xi_{ij}=X_{i}^{T}X_{j}$ and $\tilde{\xi}_{ij}=\tilde{X}_{i}^{T}\tilde{X}_{j}$, then
\[
\xi_{ij}=u_{i}u_{j}+\sqrt{1-u_{i}^{2}}\sqrt{1-u_{j}^{2}}\tilde{\xi}_{ij},\quad1\le i,j\le n-1,i\neq j,
\]
which is different from before. However, on the large probability set
\[
\Omega_{\delta}=\{|u_{i}|\le\delta,|\tilde{\xi}_{ij}|\leq\delta,1\le i,j\le n-1,i\neq j,\delta=p^{-1/2}M,M=\sqrt{20\ln p}\},
\] it can be shown that 
\[
\xi_{ij}=u_{i}u_{j}+\tilde{\xi}_{ij}+r_{ij},\quad|r_{ij}|\le\delta^{3}.
\] Thus, the Taylor expansion can be carried out in the same way, where the contribution of the extra $r_{ij}$ term is put into $T_{ij}$ and the bound Eq. (\ref{eq:TaylorBoundT}) remains true.

We still need the mean spectral norm bound to show that Eq. (\ref{eq:TaylorBoundtildeF}) holds, and to use the bound given by the 4th moment (Lemma \ref{lemma:Es(A)_4thmoment} ), it suffices to establish the bound in Lemma \ref{lemma:4thmoment_lge2}. Notice that Gegenbauer polynomials \cite{abramowitz1964handbook} are orthogonal in the space $L^2([-1,1], Q_p^{'}(u)du)$. Gegenbauer polynomials are related to the $p$-spherical harmonics $\{ \phi_j, j\in J \}$, which form an orthonormal basis of $L^{2}(S^{p-1},dP)$. $J = \cup_{l=0}^\infty J_l$, and $\{ \phi_j(X), j\in J_l \}$ are $p$-spherical harmonics of degree $l$, which are homogeneous harmonic polynomials restricted to the surface of the unit sphere. The Gegenbauer polynomial of degree $l$ as a function of $X^TY$, $X,Y \in S^{p-1}$, up to a multiplicative constant, equals 
\[
Z_{l,X}(Y) = \sum_{j\in J_l} \phi_j(X)\phi_j(Y)
\]
which is named ``the $l$-degree zonal harmonic function with axis $X$". We thus define $G_{l,p}(\xi)$ to be 
\begin{equation}\label{eq:Glfromphij} 
G_{l,p}(X^T Y) = \sum_{j\in J_l} \phi_j(X)\phi_j(Y).
\end{equation}
Notice that $G_{1,p}(X^T Y) = pX^TY$, and by convention $G_{0,p} = 1$. $G_{l,p}(\xi)$ is a polynomial of degree $l$ for all $l$, and
\[
\int_{S^{p-1}} \int_{S^{p-1}} G_{l,p}(X^T Y) G_{k,p}(X^T Y)dP(X) dP(Y) = \delta_{l,k}|J_l|.
\]
$|J_{l}|$ is the number of $p$-spherical harmonics of degree $l$, $|J_1| = p$, and for $l \ge 2$
\[
|J_{l}|=\left(\begin{array}{c}
p+l-1\\
l
\end{array}\right)-\left(\begin{array}{c}
p+l-3\\
l-2
\end{array}\right)
= \left( \frac{1}{l!} + \frac{\mathcal{O}_l(1)}{p} \right) p^l.
\]
Thus, the orthonormal polynomials $P_{l,p}(x)$ of the space ${\cal H}^{'}_p$ can be written as
\[
P_{l,p}(x) = \frac{1}{\sqrt{|J_l|}}G_{l,p}(\frac{x}{\sqrt{p}}).
\]
By Eq. (\ref{eq:Glfromphij}), we have\[
\int_{S^{p-1}} G_{l,p}(X_1^T X_2) G_{l,p}(X_2^T X_3) dP(X_2) = G_{l,p}(X_1^T X_3), \quad X_1,X_2,X_3 \in S^{p-1},
\]
which gives that (define $\xi_{ij} = X_i^T X_j$) \[
\mathbb{E} [P_{l,p}(\sqrt{p}\xi_{12}) P_{l,p}(\sqrt{p}\xi_{23}) | X_1, X_3] = \frac{1}{\sqrt{|J_l|}} P_{l,p}(\sqrt{p}\xi_{13}).
\]
As a result, $\mathbb{E}P_{l,p}(\sqrt{p}\xi_{12})P_{l,p}(\sqrt{p}\xi_{23})P_{l,p}(\sqrt{p}\xi_{34})P_{l,p}(\sqrt{p}\xi_{41})$ is bounded by \[
\frac{1}{|J_l|}=\mathcal{O}_{l}(1)p^{-l},
\] which is stronger than the estimate in Lemma \ref{lemma:4thmoment_lge2}. We comment that carrying out this analysis to higher order moments gives a moment-method proof of the convergence to semi-circle law of the ESD of random kernel matrices where $k(x;p) = P_{l,p}(x)$ for $l \ge 2$, under the model $X_{i}\sim\mathcal{U}(S^{p-1})$.

To continue to show the result in Thm. \ref{thm:main}, the mechanism in Sec. \ref{subsec:bigproof} applies to what follows in almost the same way.

Another way of extending to the model where $X_{i}\sim\mathcal{U}(S^{p-1})$ is by comparing to the standard Gaussian case. That is, to replace the $X_i$ by $X_i/|X_i|$ in the model $X_{i}\sim \mathcal{N}(0, p^{-1}I_p)$ and to bound the difference resulted in $m_A(z)$ (reducing to the finite expansion case $k=k_L$ first). This ``comparison" argument can be used to extend the result in Thm. \ref{thm:main} to other models of the distribution of  $X_{i}$'s, but we do not develop this idea any further here.

\section{Summary and Discussion}\label{sec:discussion}

The main theorem, Thm. \ref{thm:main},  establishes the convergence of the spectral density of random kernel matrices in the limit $p,n \to \infty$, $p/n=\gamma$, under the assumption that the random vectors are standard Gaussian. The theorem and the proofs also hold under the condition that $p/n \to \gamma$. Our proof is based on analyzing the Stieltjes transform of the random kernel matrix, and uses the expansion of the kernel function into orthonormal Hermite-like polynomials. The limiting spectral density holds for a larger class of kernel functions than the cases studied in \cite{karoui10}, which are smooth kernels.

 The assumption that the random vectors are standard Gaussian can be weakened. We showed that the result extends to the case that they are uniformly distributed over the unit sphere. Numerical simulations (not reported here) indicate that the limiting spectral density holds for other non-Gaussian random vectors. This includes the case where $X_i$'s are uniformly sampled from the $2^p$ vertices of the hypercube $\{-p^{-1/2},p^{-1/2}\}^p$ (where the value of the sign kernel and the divergent kernel at $x=0$ is set to 0). The universality of the limiting spectral density is however beyond the scope of this paper.


While our paper mainly focused on the limiting spectral density, another question of practical importance concerns the statistics of the largest eigenvalue of random kernel matrices. This include studying the mean, variance, limiting distribution, as well as large deviation bounds for the largest eigenvalue. As discussed in Remark \ref{rk:s(A)isO(1)}, the bound in Lemma \ref{lemma:Es(A)_4thmoment} for the expected value of the spectral norm is far from being sharp. Numerical simulations (not reported here) have shown that for the models studied in this paper, the largest/smallest eigenvalue lies at the right/left end of the support of the limiting spectral density, and thus both of them are conjectured to be $\mathcal{O}(1)$ almost surely. We are not aware of any result concerning the limiting probability law of the largest eigenvalue of random kernel matrices, except for the one in \cite{karoui10} where the kernel function is assumed to have strong ($C^3$) regularity. 
Many other interesting questions can be asked from the RMT point of view, e.g. the ``eigenvalue spacing" problem, namely the ``local law" of eigenvalues. If the asymptotic concentration of the eigenvalues at the ``local level" could be established, one consequence would be that the top eigenvalue can be shown to concentrate at the right end of the limiting spectral density.

There are several interesting extensions of the inner-product kernel matrix model. The first possible extension is to distance kernel functions of the form $f(X_i, X_j) = f(|X_i - X_j|)$, which are popular in machine learning applications. Due to the relation \[
|X_i-X_j|^2 = |X_i|^2 + |X_j|^2 - 2X_i^TX_j,
\]
for the model where $X_i \sim \mathcal{U}(S^{p-1})$, where $|X_i|\equiv 1$, distance kernels can be regarded as inner-product kernels. However, for the model where $X_i$ $\sim \mathcal{N}(0,p^{-1}I_p)$, the fluctuations in $|X_i|$'s do seem to make a difference,  and so far we have not been able to draw any conclusion about the limiting spectrum.

Another extension is to kernels that are of more general forms, neither an inner-product kernel nor a distance one. For example, a complex-valued kernel has been used in \cite{singer2011classaveraging} for a dataset of tomographic images. Every pair of images is brought into in-plane rotational alignment. The modulus of the kernel function corresponds to the similarity of the images when they are optimally aligned, while the phase of the kernel is the optimal in-plane alignment angle. Notice that this kernel is discontinuous, since a small perturbation in the images may lead to a completely different phase. Similar kernels with discontinuity have also been used for dimensionality reduction \cite{singer2011vectordiffusionmap} and sensor network localization \cite{cucuringu2011sensor}. In many senses, these applications have been the motivation for the analysis presented in this paper.  

Finally, it is also possible to extend the study to non-Hermitian matrices as follows. Suppose that $X_1, \cdots, X_m$ are $m$ i.i.d random vectors in $\mathbb{R}^p$, and $Y_1, \cdots, Y_n$ are $n$ i.i.d random vectors in $\mathbb{R}^p$, independent from the $X_i$'s. The $m$-by-$n$ matrix $A$ is constructed as $A_{ij} = f(X_i^T Y_j)$ where $f$ is some function. The distribution of the singular values of $A$ in the limit $p,m,n\to \infty$ and $p/n=\gamma_1, p/m = \gamma_2$ is conjectured to converge to a certain limiting density.

\section*{Acknowledgements}
The authors would like to thank Charles Fefferman, Ramon van Handel, Yakov G. Sinai and Van Vu for several useful discussions. The authors were partially supported by by Award Number DMS-0914892 from the NSF and by Award Number R01GM090200 from the NIGMS. A. Singer was also supported by Award Number FA9550-09-1-0551 from AFOSR and by the Alfred P. Sloan Foundation. 

\appendix

\section{Solution of the Equation of $m(z)$}\label{sec:mzeqnsolution}
We rewrite Eq. (\ref{eq:mzeqn}) as
\begin{equation}\label{eq:mzeqncubic}
\frac{a(\nu -a^2)}{\gamma}m^3 + (\nu +az)m^2 + (a+\gamma z)m + \gamma = 0, \quad \Im(z)>0, \Im(m)>0,
\end{equation}
where $ a^2 \leq \nu$. When $a = 0$ ($a^2 = \nu$) the equation corresponds to the semi-circle distribution (M.P. distribution), and the existence and uniqueness of the solution with positive imaginary part are known. We consider the case where $ 0 < a^2 < \nu$, thus the cubic term in Eq. (\ref{eq:mzeqncubic}) does not vanish.

\begin{lemma}\label{lemma:unique}
For every $z$ with $\Im(z) > 0$, there exists a unique $m$ with $\Im(m) > 0$ for which Eq. (\ref{eq:mzeqncubic}) holds.
\end{lemma}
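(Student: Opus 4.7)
\textbf{Proof plan for Lemma \ref{lemma:unique}.} The plan is to show that for every $z$ in the open upper half-plane $\mathbb{H}^+ := \{w \in \mathbb{C} : \Im w > 0\}$, the cubic
\[
P(m, z) := \frac{a(\nu-a^2)}{\gamma}\, m^3 + (\nu + a z)\, m^2 + (a + \gamma z)\, m + \gamma
\]
has exactly one root in $\mathbb{H}^+$, counted with multiplicity. Since $0 < a^2 < \nu$, the leading coefficient is a nonzero constant, so $P(\cdot, z)$ is a genuine cubic whose three roots depend continuously on $z$ as a multiset. The strategy has three steps: first, rule out real roots when $\Im z > 0$; second, conclude that the count of roots in $\mathbb{H}^+$ is constant on $\mathbb{H}^+$; third, evaluate the count at infinity and show it equals one, which proves both existence and uniqueness.

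For the first step, decompose $P(m, z) = Q_0(m) + z\, Q_1(m)$ with $Q_1(m) = m(a m + \gamma)$, where both $Q_0$ and $Q_1$ have real coefficients. For real $m$, $\Im P(m, z) = \Im(z) \cdot Q_1(m)$, so any real root when $\Im z > 0$ must satisfy $Q_1(m) = 0$, forcing $m = 0$ or $m = -\gamma/a$. Direct substitution gives $P(0, z) = \gamma$ and $P(-\gamma/a, z) = \gamma^2$; the $z$-dependence in the second expression cancels precisely because $Q_1(-\gamma/a) = 0$. Neither value is zero, so $P(\cdot, z)$ has no real root. Letting $N(z)$ denote the number of roots of $P(\cdot, z)$ in $\mathbb{H}^+$ counted with multiplicity, Hausdorff continuity of roots together with the absence of roots on $\mathbb{R}$ implies that $N$ is a locally constant integer-valued function on $\mathbb{H}^+$, hence constant by connectedness.

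It remains to evaluate $N(z)$ at one specific point; I would take $z = iT$ with $T$ large and identify the three asymptotic branches of the cubic. Balancing the low-order terms $\gamma z\, m + \gamma$ produces a root $m \sim -1/z$, with $\Im m \sim T/|z|^2 > 0$. A Newton correction at $m_0 = -\gamma/a$, using $P(m_0, z) = \gamma^2$ and $P_m(m_0, z) = -\gamma z + O(1)$, produces a root $m \sim -\gamma/a + \gamma/z$, with $\Im m \sim -\gamma T/|z|^2 < 0$. Balancing the cubic term against $a z\, m^2$ produces a third root $m \sim -\gamma z/(\nu - a^2)$, with $\Im m \sim -\gamma T/(\nu - a^2) < 0$. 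For $T$ large these three asymptotic scales are disjoint, so they exhaust all three roots of the cubic, yielding $N(iT) = 1$ and hence $N \equiv 1$ on $\mathbb{H}^+$.

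The main obstacle will be upgrading the three asymptotic branches in the final step from formal expansions to genuine roots. My plan is to rescale $P$ at each of the three scales so that the rescaled equation has a limiting polynomial possessing a distinguished simple root at the proposed limit value: for the first scale use $m$ bounded and let $z\to\infty$; for the second use $m$ near $-\gamma/a$; for the third use $m/z$ bounded. The implicit function theorem at each simple root of the limiting polynomial then yields an actual root of $P(\cdot, iT)$ converging to the claimed location as $T \to \infty$, and a degree count confirms these three roots account for all zeros of the cubic.
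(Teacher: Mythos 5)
Your argument is correct, and it shares the key skeleton with the paper's proof: both show that for $\Im z>0$ the cubic has no real root, then use continuity of the roots to deduce that the count in the upper half-plane is constant, and finally evaluate that count at a base point. Where you diverge is in the choice of base point and the scope of the continuity argument. The paper treats $(a,\nu,\gamma,z)$ as a point in the full parameter domain $\mathcal{D}=\{\gamma>0,\ 0<a^2<\nu,\ \Im z>0\}$, observes that $\mathcal{D}$ has exactly two connected components (distinguished by the sign of $a$), and reduces to checking a single numerical instance in each component (e.g.\ $a=\pm 1/2$, $\nu=\gamma=1$, $z=i$) — a check it leaves to the reader. You instead fix $(a,\nu,\gamma)$, vary only $z$, and take $z=iT\to i\infty$, where the three roots separate into three explicit asymptotic branches $m\sim -1/z$, $m\sim -\gamma/a+\gamma/z$, and $m\sim -\gamma z/(\nu-a^2)$, of which exactly the first lies in $\mathbb{H}^+$; the sign computations are insensitive to the sign of $a$, so no case split is needed. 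Your version is self-contained (no numerical evaluation is deferred) and also gives a concrete description of where the three branches live for large $|z|$, which the paper's argument does not. Your decomposition $P=Q_0+zQ_1$ with $Q_1(m)=m(am+\gamma)$ and the observations $P(0,z)=\gamma$, $P(-\gamma/a,z)=\gamma^2$ are correct, and your concern about upgrading formal branches to genuine roots is easily handled as you describe (rescale at each of the three scales and apply the implicit function theorem to a simple root of the limiting polynomial), so the plan is sound as written.
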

\begin{proof}
It can be verified that whenever $a,\nu, \gamma$ are real and $\Im(z)>0$, the solution $m$ must not be real. Define the domain ${\cal D} := \{(a,\nu,\gamma, z), \gamma >0, 0 < a^2 < \nu , \Im(z) > 0\}$ which has two connected components ${\cal D}_+ = {\cal D} \cap \{a>0\} $ and ${\cal D}_- = {\cal D} \cap \{a<0\} $. The three solutions of the cubic equation depend continuously on the coefficients, thus if we let $(a,\nu, \gamma, z)$ vary continuously in ${\cal D}_+$, the imaginary parts of the three solutions never change sign, and similarly for ${\cal D}_-$. As a result, it suffices to show that for one choice of $(a,\nu,\gamma, z)\in {\cal D}_+$ and one choice in ${\cal D}_-$, there is a unique solution with positive imaginary part. This can be done, for example, by choosing $ a = \pm 1/2$, $\nu = 1$, $\gamma = 1$ and $z = i$. 
\end{proof}

\begin{figure}
\centering
\includegraphics[width = 0.6\linewidth]{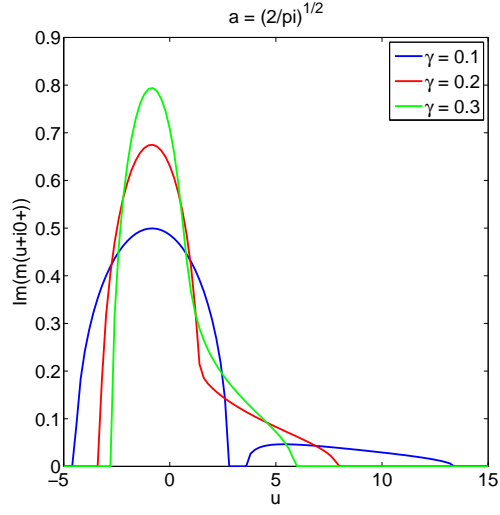}
\caption{\label{fig:mzsolution}
Function $y(u;a, \nu, \gamma)$ as in Eq. (\ref{eq:y(u)}).
}
\end{figure}

The explicit expression for $y(u)$ defined in Eq. (\ref{eq:y(u)def}) is given by
\begin{equation}\label{eq:y(u)}
y(u;a, \nu, \gamma)=\begin{cases}
0, & D\leq 0,\\
\frac{\sqrt{3}}{2}((\sqrt{D}+R)^{\frac{1}{3}} + (\sqrt{D}-R)^{\frac{1}{3}}), & D >0,
\end{cases}
\end{equation}
where\[
\begin{split}
D & = Q^3+R^2,\\
R & =  (9\alpha_2\alpha_1-27\alpha_0-2\alpha_2^3)/54,\\
Q & = (3\alpha_1 -\alpha_2^2)/9,
\end{split}
\]
and \[
m^3 + \alpha_2 m^2 + \alpha_1 m + \alpha_0 =0
\]
is derived from Eq. (\ref{eq:mzeqncubic}) by multiplying $(\frac{a\nu}{\gamma})^{-1}$on both sides. Explicitly,
\begin{eqnarray*}
\alpha_2 &= \frac{(\nu +au)\gamma}{a(\nu - a^2)}, \\
\alpha_1 &= \frac{(a+\gamma u)\gamma}{a(\nu - a^2)}, \\
\alpha_0 &= \frac{\gamma^2}{a(\nu - a^2)}.
\end{eqnarray*}
So all of $\alpha_2$, $\alpha_1$, $\alpha_0$, and thus $R$, $Q$ and $D$ are real numbers. $D$ is the ``discriminant" of cubic equation, where $D$ turning from negative to positive signals the emergence of a pair of complex solutions. The function $y(u; a, \nu, \gamma)$ is plotted in Fig. \ref{fig:mzsolution} where $\nu=1$, $a=\sqrt{2/\pi}$ and $\gamma = 0.1, 0.2, 0.3$. Notice the invariance of Eq. (\ref{eq:mzeqncubic}) under the transformation
\[
\nu c^2 \to \nu,~ ac \to a, ~ zc \to z, ~ m/c \to m
\] where $c$ is any positive constant, which corresponds to multiplying the kernel function by $c$.

\section{Lemma in Sec. 2}\label{sec:A2}

\begin{proof}[Proof of Lemma \ref{lemma:m_A-Em_A}] 
We need the Burkholder's Inequality (Lemma 2.12.
of \cite{bai}), which says that for $\{\gamma_{k},1\le k\le n\}$
being a (complex-valued) martingale difference sequence, for $\beta>1$,
\begin{equation}
\mathbb{E}|\sum_{k=1}^{n}\gamma_{k}|^{\beta}\leq K_{\beta}\mathbb{E}\left(\sum_{k=1}^{n}|\gamma_{k}|^{2}\right)^{\beta/2},
\label{eq:Burkholder0}
\end{equation}
where $K_\beta$ is a positive constant depending on $\beta$.
Using the i.i.d. random vectors $\{X_{i},1\leq i\leq n\}$, we will
define the martingale to be
\[
M_{k}=\mathbb{E}(\Tr(A-zI)^{-1}|\sigma\{X_{k+1},\cdots,X_{n}\}):=\mathbb{E}_{k}\Tr(A-zI)^{-1},\quad0\leq k\leq n,
\]
where $\sigma\{X_{k+1},\cdots,X_{n}\}:={\cal F}_{n-k}$ denotes the
$\sigma$-algebra generated by $\{X_{i},k+1\le i\le n\}$ and $\mathbb{E}(\cdot|{\cal G})$
the conditional expectation with respect to the sub-$\sigma$-algebra
${\cal G}$. We have $M_{n}=\mathbb{E}\Tr(A-zI)^{-1}$ and $M_{0}=\Tr(A-zI)^{-1}$,
and $M_{n},\cdots,M_{0}$ form an martingale with respect to the filtration
$\{{\cal F}_{t},t=0,\cdots,n\}$. The martingale difference 
\begin{align}
\gamma_{k} & =M_{k-1}-M_{k}\nonumber \\
 & =\mathbb{E}_{k-1}\Tr(A-zI)^{-1}-\mathbb{E}_{k}\Tr(A-zI)^{-1}\nonumber \\
 & =\mathbb{E}_{k}(\Tr(A-zI)^{-1}-\Tr(A^{(k)}-zI)^{-1}) \nonumber\\
 & \quad -\mathbb{E}_{k-1}(\Tr(A-zI)^{-1}-\Tr(A^{(k)}-zI)^{-1})
\label{eq:gammak_mart_diff}
\end{align}
where $A^{(k)}$ is an $(n-1)$-by-$(n-1)$ matrix that is obtained from the matrix $A$ by eliminating its $k$-th column and $k$-th row. Notice that $A^{(k)}$
is independent of $X_{k}$, 
$\mathbb{E}_{k-1}\Tr(A^{(k)}-zI)^{-1}=\mathbb{E}_{k}\Tr(A^{(k)}-zI)^{-1}$,
which verifies the last line of Eq. (\ref{eq:gammak_mart_diff}).
At the same time, we have 
\begin{equation}
|\Tr(A-zI)^{-1}-\Tr(A^{(k)}-zI)^{-1}|\le\frac{4}{v},
\label{eq:Tr(A-z)^-1-Tr(A^(k)-z)^-1}
\end{equation}
where $v=\Im(z)>0$, using an argument similar to that in Sec. 2.4. of \cite{Tao2011book} (see Eq. (2.96)). The way to show Eq. (\ref{eq:Tr(A-z)^-1-Tr(A^(k)-z)^-1}) is by making use of 
(1) that the ordered $n-1$ eigenvalues of a minor of a symmetric (or Hermitian) matrix $A$ `interlace' the ordered $n$ eigenvalues of $A$, which follows from the Courant-Fischer theorem (see, for example, Exercise 1.3.14 of \cite{Tao2011book}), and 
(2) that for fixed $z$ both real and imaginary parts of $(t-z)^{-1}$ as functions of $t$ have bounded total variation. As a result, 
\begin{equation*}
\begin{split}
|\gamma_{k}| 
& \le|\mathbb{E}_{k}(\Tr(A-zI)^{-1}-\Tr(A^{(k)}-zI)^{-1})| \\
& \quad +|\mathbb{E}_{k-1}(\Tr(A-zI)^{-1}-\Tr(A^{(k)}-zI)^{-1})| \\
& \le 2\frac{4}{v}:=C,
\end{split}
\end{equation*}
and then with Eq. (\ref{eq:Burkholder0}), choosing $\beta=4$,
\begin{align*}
\mathbb{E}|m_{A}-\mathbb{E}m_{A}|^{4} & =\frac{1}{n^{4}}\mathbb{E}|\sum_{k=1}^{n}\gamma_{k}|^{4}\\
 & \le\frac{1}{n^{4}}K_{4}\left(\sum_{k=1}^{n}|\gamma_{k}|^{2}\right)^{2}\\
 & \le\frac{1}{n^{4}}K_{4}(nC^{2})^{2}={\cal O}(1)n^{-2}.
\end{align*} 
This implies the almost sure convergence of $m_{A}-\mathbb{E}m_{A}$ to 0 by Borel-Cantelli lemma. Also, Eq. (\ref{eq:Lemma.|m_A-Em_A|<n^-1/2}) follows by Jensen's inequality.
\end{proof}

\begin{lemma}\label{lemma:E|m_A-RHS(tildem)|}
 Notations as in Sec. \ref{subsec:A1isMP}\[
\mathbb{E}\left|m_{A}(z)-\left(-z-\left(1-\left(1+\frac{1}{\gamma}\mathbb{E}\tilde{m}(z)\right)^{-1}\right)\right)^{-1}\right|\rightarrow 0.
\]
\end{lemma}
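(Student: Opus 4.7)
The plan is to show that the random denominator appearing in Eq.~(\ref{eq: A1denominator}), which determines $(A-zI)^{-1}_{nn}$, concentrates at the deterministic expression inside the target formula, and then to transfer this concentration to the reciprocal using the uniform bound $|(A-zI)^{-1}_{nn}| \leq 1/v$. Write
\[
\alpha := \eta^{T}(\tilde{A}^{(n)}-D_{\eta}-zI_{n-1})^{-1}\eta, \qquad \beta := \gamma^{-1}\mathbb{E}\tilde{m}(z),
\]
so that by Eq.~(\ref{eq: A1denominator}) the task reduces to showing $|X_n|^{2}\bigl(1-(1+\alpha)^{-1}\bigr) \to 1 - (1+\beta)^{-1}$ in $L^{1}$, after which the Lemma follows from the Lipschitz behaviour of $w \mapsto 1/(-z-w)$ on the half-plane $\Im w \geq 0$ (where $|-z-w| \geq v$).

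First I would handle $|X_{n}|^{2}$: it is $p^{-1}$ times a $\chi_p^2$ variable, hence equals $1 + O_{L^{2}}(p^{-1/2})$. Next, since $\|D_{\eta}\|_{\mathrm{op}} = \max_{i}\eta_{i}^{2} \leq \delta^2$ on the large-probability set $\Omega_\delta$ of Eq.~(\ref{eq:def.Omega_delta}), the resolvent identity
\[
(\tilde{A}^{(n)}-D_{\eta}-zI)^{-1} - (\tilde{A}^{(n)}-zI)^{-1} = (\tilde{A}^{(n)}-D_{\eta}-zI)^{-1} D_{\eta} (\tilde{A}^{(n)}-zI)^{-1}
\]
shows that the operator norm difference is $O(p^{-1}\log p / v^{2})$, so this contribution to $\alpha$ is bounded in expectation by $\mathbb{E}\|\eta\|^{2}\cdot\|D_\eta\|/v^{2} = O(p^{-1}\log p / v^{2})$.

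The core step is the concentration of the quadratic form $\eta^{T}(\tilde{A}^{(n)}-zI)^{-1}\eta$. Since $\tilde{X}_{i}$ and $\eta_i$ are independent (by the decomposition~(\ref{eq:Xn_1})), the matrix $M := (\tilde{A}^{(n)} - zI)^{-1}$ is independent of $\eta \sim \mathcal{N}(0, p^{-1}I_{n-1})$. Standard Gaussian concentration of quadratic forms gives, conditionally on $M$,
\[
\mathbb{E}\bigl[\, \bigl| \eta^{T} M \eta - p^{-1}\mathrm{Tr}(M)\bigr|^{2} \,\big|\, M \,\bigr] = \frac{2}{p^{2}}\mathrm{Tr}(M M^{*}) \leq \frac{2(n-1)}{p^{2}v^{2}},
\]
so $\eta^{T} M\eta = p^{-1}\mathrm{Tr}(M) + O_{L^{2}}(p^{-1/2})$. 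Writing $p^{-1}\mathrm{Tr}(M) = \tfrac{n-1}{p}\tilde{m}(z)$, which is $\gamma^{-1}\tilde{m}(z) + o(1)$, and applying Lemma~\ref{lemma:m_A-Em_A} (with the same martingale argument applied to $\tilde{A}^{(n)}$) to replace $\tilde{m}(z)$ by $\mathbb{E}\tilde{m}(z)$ at cost $O(n^{-1/2})$ in $L^{1}$, I obtain $\alpha \to \beta$ in $L^{1}$.

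The main obstacle, and what needs the most care, is keeping denominators bounded away from zero. The key observation is that $\Im(\alpha) \geq 0$: indeed, $\tilde{A}^{(n)}-D_{\eta}$ is real symmetric, so for real $\eta$ the quadratic form $\eta^T (\tilde{A}^{(n)}-D_{\eta}-zI)^{-1}\eta$ has $\Im$ part $v \sum_i y_i^2/|\lambda_i-z|^2 \geq 0$ where $(y_i)$ are the eigencoordinates of $\eta$. Hence $|1+\alpha| \geq 1$ when $\Re(\alpha) \geq 0$, and in general $|1+\alpha| \geq \Im(\alpha) \vee |1 + \Re\alpha|$, which combined with the boundedness of $|D| \geq v$ (derived from Eq.~(\ref{eq:boundii})) yields a lower bound on $|1+\alpha|$ on a high-probability set; on its complement I discard the contribution using the uniform $1/v$ bound on both $m_A$ and the target expression. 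The map $w \mapsto 1-(1+w)^{-1}$ is then Lipschitz on the relevant region, so $|X_n|^{2}(1-(1+\alpha)^{-1}) \to 1 - (1+\beta)^{-1}$ in $L^{1}$, and one final application of the Lipschitz property of $w\mapsto (-z-w)^{-1}$ on $\{\Im w \geq 0\}$ finishes the proof.
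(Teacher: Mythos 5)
Your overall plan matches the paper's: decompose the denominator of $(A-zI)^{-1}_{nn}$ via Eq.~(\ref{eq: A1denominator}), show that $\alpha := \eta^T(\tilde A^{(n)}-D_\eta-zI)^{-1}\eta$ concentrates at $\beta := \gamma^{-1}\mathbb{E}\tilde m(z)$ by separating the $|X_n|^2$ fluctuation, the $D_\eta$ resolvent perturbation, the quadratic-form concentration, and the martingale concentration of $\tilde m$, and then pass to the reciprocal using $|{-}z - w| \geq v$ for $\Im w\geq 0$. Your treatment of each of these four concentration pieces is essentially identical to the paper's Lemmas~\ref{lemma:r1Omegadelta}, \ref{bound:E|r_2|forA1} and the use of Lemma~\ref{lemma:m_A-Em_A}, up to cosmetic differences.

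The step that does not hold up as written is the lower bound on $|1+\alpha|$. You argue from $\Im(\alpha)\geq 0$ and $|D|\geq v$ (where $D$ is the overall denominator from Eq.~(\ref{eq:inv_nn})). But $\Im(\alpha)\geq 0$ only tells you $1+\alpha$ sits in the closed upper half-plane; if $\Re\alpha$ is near $-1$ and $\Im\alpha$ is small, $|1+\alpha|$ can be arbitrarily small while $\Im(\alpha)\geq 0$ still holds. And $|D|\geq v$ is a \emph{lower} bound on $|D|$; since $D=-z-|X_n|^2\bigl(1-(1+\alpha)^{-1}\bigr)$, a very large $|(1+\alpha)^{-1}|$ makes $|D|$ large, which is perfectly consistent with $|D|\geq v$ and so gives no upper bound on $|(1+\alpha)^{-1}|$. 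There is no way to combine these two facts to get what you need. The paper's device (Lemma~\ref{bound:|eta(A-z)^-1eta|<M'}) is the Sherman--Morrison identity itself: since $A^{(n)}-zI = \eta\eta^T + (\tilde A^{(n)}-D_\eta-zI)$, one has
\[
(1+\alpha)^{-1} \;=\; 1 - \eta^{T}(A^{(n)}-zI_{n-1})^{-1}\eta,
\]
and on $\Omega_\delta$ the right-hand side is bounded in modulus by $1 + v^{-1}\|\eta\|^2 \leq 1 + {\cal O}(1)M^2 =: M'$. This gives a clean, $p$-dependent but slowly growing ($M' = o(p^\epsilon)$) bound, which is exactly what makes the final error ${\cal O}(1)M'^2 p^{-1/2}\to 0$. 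An alternative rescue of your route would be to lower-bound $\Im(\beta)=\gamma^{-1}\Im(\mathbb{E}\tilde m(z))$ directly (e.g.~by Jensen on $\Im\tilde m = \frac{1}{n-1}\sum_i v/|\tilde\lambda_i-z|^2$ together with a trace bound on $\tilde A^{(n)\,2}$) and then transfer to $\alpha$ via the concentration $\alpha\approx\beta$; this works but is extra machinery that neither you nor the paper develops, and your written argument does not supply it.
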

\begin{remark} \label{rk:A1_1}
The proof provided below can be replaced by a simpler one. The reason we give this proof is that it contains many of the techniques that are used in showing the main result. 
\end{remark}

\begin{proof}
Continue from Eq. (\ref{eq: A1denominator}).  
We first observe that when $p$ is large, $|X_{n}|^{2}$ concentrates at 1, 
and specifically, with $p$ large enough
\begin{equation}
\Pr \left[ | |X_{n}|^{2}-1 |>\sqrt{\frac{40\ln p}{p}} \right]<p^{-9},
\label{bound.|Xn|^1-1isdelta_p-9}
\end{equation}
which can be verified by standard large deviation inequality techniques. However, at this stage the following moment bound will be enough for our purpose:
\begin{equation}
\mathbb{E}\left||X_{n}|^{2}-1\right|
\le\sqrt{\mathbb{E}\left(|X_{n}|^{2}-1\right)^{2}}
=\sqrt{\frac{2}{p}}\rightarrow0.
\label{eq:Xn2is1}
\end{equation}
We then write the denominator in Eq.  (\ref{eq: A1denominator}) as
\begin{equation}
\begin{split}
\eta^{T}(\tilde{A}^{(n)}-D_{\eta}-zI_{n-1})^{-1}\eta
&= \frac{1}{p}\mathbf{Tr}(\tilde{A}^{(n)}-zI_{n-1})^{-1}+r \\
&= \frac{1}{\gamma} \mathbb{E} \tilde{m}(z) + \frac{1}{\gamma}\tilde{r} + r ,
\end{split}
\label{eq:eta(tildeA-D-z)^-1eta}
\end{equation}
 where $r = \eta^{T}(\tilde{A}^{(n)}-D_{\eta}-zI_{n-1})^{-1}\eta - \frac{1}{p}\mathbf{Tr}(\tilde{A}^{(n)}-zI_{n-1})^{-1}  $,  $\tilde{m}(z): =\frac{1}{n}\Tr(\tilde{A}^{(n)}-zI_{n-1})^{-1}$,
and $\tilde{r}:=(\tilde{m}(z)-\mathbb{E}\tilde{m}(z))$.
We have that
\begin{enumerate}
\item $\mathbb{E}|\tilde{r}|\le{\cal O}(1)n^{-1/2}$
as $n\rightarrow\infty$: Because $\tilde{A}^{(n)}$ is itself an $(n-1)\times(n-1)$ kernel matrix
by Eq. (\ref{eq:tildeA1}), Lemma \ref{lemma:m_A-Em_A} applies.

\item 
 $r$ splits into two terms
\begin{align*}
r & =\left(\eta^{T}(\tilde{A}^{(n)}-D_{\eta}-zI_{n-1})^{-1}\eta-\eta^{T}(\tilde{A}^{(n)}-zI_{n-1})^{-1}\eta\right)\\
 & \quad +\left(\eta^{T}(\tilde{A}^{(n)}-zI_{n-1})^{-1}\eta-\frac{1}{p}\mathbf{Tr}(\tilde{A}^{(n)}-zI_{n-1})^{-1}\right)\\
 & :=r_{1}+r_{2},
\end{align*}
where  (1) $ \mathbb{E}|r_{2}|\le{\cal O}(1)p^{-1/2}$, by Lemma \ref{bound:E|r_2|forA1}; 
(2) $|r_{1}|{\bf 1}_{\Omega_{\delta}}\le{\cal O}(1)p^{-1/2}$, where $\Omega_{\delta}$ is a large probability set  depending on $p$, defined as 
\[
\Omega_{\delta}=\{|\eta_{i}|<\delta,\,1\le i\le n-1,\,\delta=\frac{M}{\sqrt{p}}\}, 
\quad M=\sqrt{20\ln p},
\]
by Lemma \ref{lemma:r1Omegadelta}. Notice that $M=o(p^{\epsilon})$ for any $\epsilon>0$.
\end{enumerate}

Back to Eq. (\ref{eq:EmA1}). By Eqs. (\ref{eq: A1denominator}) and (\ref{eq:eta(tildeA-D-z)^-1eta}), 
we have 
\begin{align*}
\mathbb{E}m_{A}(z) & =\mathbb{E}\left((A-zI)^{-1}\right)_{nn}\\
 & =\mathbb{E}\left(-z-|X_{n}|^{2}\left(1-(1+\frac{1}{p}\mathbf{Tr}(\tilde{A}^{(n)}-zI_{n-1})^{-1}+r)^{-1}\right)\right)^{-1}.
\end{align*}
The following bounds (1) - (4) can be verified:
\begin{itemize}

\item[(1)] (Lemma \ref{bound:|eta(A-z)^-1eta|<M'})
 On $\Omega_{\delta}$, $|\eta^{T}(A^{(n)}-zI_{n-1})^{-1}\eta|$
and $|(1+\eta^{T}(\tilde{A}^{(n)}-D_{\eta}-zI_{n-1})^{-1}\eta)^{-1}|$
are both bounded by $M^{'}=1+{\cal O}(1)M^{2}$, $M^{'}=o(p^{\epsilon})$
for any $\epsilon>0$. 

\item[(2)] 
(Lemma \ref{bound: |(1+gamma^-1(Em))^-1|})
On $\Omega_{r}\cap\Omega_{\delta}$, 
$\left|\left(1+\frac{1}{\gamma}\mathbb{E}\tilde{m}(z)\right)^{-1}\right|\le2M^{'}$, 
where we define 
\[
\Omega_{r}=\{|\tilde{r}|<p^{-1/4},|r_{2}|<p^{-1/4}\},
\]
and by Markov inequality, we have 
\[
\Pr(\Omega_{r}^{c})\le p^{1/4}\mathbb{E}|\tilde{r}|+p^{1/4}\mathbb{E}|r_{2}|\le{\cal O}(1)p^{-1/4}
\]
 when $p$ is large.

\item[(3)] $\left|\left((A-zI)^{-1}\right)_{nn}\right|\le\frac{1}{v}$, which is Eq. (\ref{eq:boundii}).

\item[(4)] $\left|\left(-z-\left(1-\left(1+\frac{1}{\gamma}\mathbb{E}\tilde{m}(z)\right)^{-1}\right)\right)^{-1}\right|\le\frac{1}{v}$:
By $\Im\left(-\left(1+\frac{1}{\gamma}\mathbb{E}\tilde{m}(z)\right)^{-1}\right)$
equals a positive number times $\Im\left(\frac{1}{\gamma}\mathbb{E}\tilde{m}(z)\right)$
which is also positive, one verifies that \[
\Im\left(-z-\left(1-\left(1+\frac{1}{\gamma}\mathbb{E}\tilde{m}(z)\right)^{-1}\right)\right)<\Im(-z)=-v,
\] so \[
\left|-z-\left(1-\left(1+\frac{1}{\gamma}\mathbb{E}\tilde{m}(z)\right)^{-1}\right)\right|>v.
\] 
\end{itemize}

With (1) and (2), we have 
\begin{align}
  \mathbb{E}
 & \left|\left(1 +\eta^{T}(\tilde{A}^{(n)}-D_{\eta}-zI_{n-1})^{-1}\eta\right)^{-1}-\left(1+\frac{1}{\gamma}\mathbb{E}\tilde{m}(z)\right)^{-1}\right|\cdot{\bf 1}_{\Omega_{\delta}\cap\Omega_{r}}\nonumber \\
 & \le\mathbb{E}(M^{'}\cdot2M^{'})(|r|+\frac{1}{\gamma}|\tilde{r}|)\cdot{\bf 1}_{\Omega_{\delta}\cap\Omega_{r}}\nonumber \\
 & \le2M^{'2}(\mathbb{E}|r_{2}|+\mathbb{E}|r_{1}|\cdot{\bf 1}_{\Omega_{\delta}}+\gamma^{-1}\mathbb{E}|\tilde{r}|)\nonumber \\
 & \le2M^{'2}({\cal O}(1)p^{-1/2}+{\cal O}(1)p^{-1/2}+{\cal O}(1)n^{-1/2}) \nonumber \\
 & ={\cal O}(1)M^{'2}p^{-1/2}.\label{eq:|()^-1-()^-1|inA1}
\end{align}
Using bounds (1)-(4), together with Eqs. (\ref{eq:|()^-1-()^-1|inA1}) and (\ref{eq:Xn2is1}), we have
\begin{align*}
 & \mathbb{E}\left|m_{A}(z)-\left(-z-\left(1-\left(1+\frac{1}{\gamma}\mathbb{E}\tilde{m}(z)\right)^{-1}\right)\right)^{-1}\right|\\
 & =\mathbb{E}\left|\left(-z-|X_{n}|^{2}\eta^{T}(A^{(n)}-zI_{n-1})^{-1}\eta\right)^{-1}-\left(-z-\left(1-\left(1+\frac{1}{\gamma}\mathbb{E}\tilde{m}(z)\right)^{-1}\right)\right)^{-1}\right|\\
 & \le\frac{2}{v}\left(\Pr(\Omega_{\delta}^{c})+\Pr(\Omega_{r}^{c})\right)\\
 & \quad + \mathbb{E}\left|\left(-z-|X_{n}|^{2}\eta^{T}(A^{(n)}-zI_{n-1})^{-1}\eta\right)^{-1}-\left(-z-\left(1-\left(1+\frac{1}{\gamma}\mathbb{E}\tilde{m}(z)\right)^{-1}\right)\right)^{-1}\right|\cdot{\bf 1}_{\Omega_{\delta}\cap\Omega_{r}}\\
 & \le\frac{2}{v}\left(\Pr(\Omega_{\delta}^{c})+\Pr(\Omega_{r}^{c})\right)\\
 & \quad + \mathbb{E}\frac{1}{v^{2}}\left||X_{n}|^{2}-1\right|\cdot|\eta^{T}(A^{(n)}-zI_{n-1})^{-1}\eta|\cdot{\bf 1}_{\Omega_{\delta}\cap\Omega_{r}}\\
 & \quad + \mathbb{E}\frac{1}{v^{2}}\left|\left(1+\eta^{T}(\tilde{A}^{(n)}-D_{\eta}-zI_{n-1})^{-1}\eta\right)^{-1}-\left(1+\frac{1}{\gamma}\mathbb{E}\tilde{m}(z)\right)^{-1}\right|\cdot{\bf 1}_{\Omega_{\delta}\cap\Omega_{r}}\\
 & \le\frac{2}{v}\left(\Pr(\Omega_{\delta}^{c})+\Pr(\Omega_{r}^{c})\right)+\mathbb{E}\frac{1}{v^{2}}\left||X_{n}|^{2}-1\right|M^{'}{\bf 1}_{\Omega_{\delta}\cap\Omega_{r}}+\frac{1}{v^{2}}{\cal O}(1)M^{'2}p^{-1/2}\\
 & \le{\cal {\cal O}}(1)p^{-9}+{\cal O}(1)p^{-1/4}+M^{'}{\cal O}(1)p^{-1/2}+{\cal O}(1)M^{'2}p^{-1/2}\\
 & =o(p^{-1/2+\epsilon}),
\end{align*}
 for any $\epsilon>0$, which proves the statement.
\end{proof}

\begin{lemma}\label{lemma:r1Omegadelta}
Notations as in Lemma \ref{lemma:E|m_A-RHS(tildem)|},
\[
|r_{1}|{\bf 1}_{\Omega_{\delta}}\le{\cal O}(1)p^{-1/2}
\] 
\end{lemma}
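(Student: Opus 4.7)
The plan is to use a standard resolvent identity to compare the two resolvents, and then bound everything in operator norm using the fact that $D_\eta$ is small on $\Omega_\delta$.

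First, I would write $r_1$ as a single quadratic form by applying the identity
\[
(\tilde{A}^{(n)}-D_\eta-zI)^{-1} - (\tilde{A}^{(n)}-zI)^{-1} = (\tilde{A}^{(n)}-D_\eta-zI)^{-1}\, D_\eta\, (\tilde{A}^{(n)}-zI)^{-1},
\]
so that
\[
r_1 = \eta^T (\tilde{A}^{(n)}-D_\eta-zI)^{-1} D_\eta (\tilde{A}^{(n)}-zI)^{-1} \eta.
\]
Because $\tilde{A}^{(n)}$ is symmetric and $D_\eta$ is real diagonal, both $\tilde{A}^{(n)}-zI$ and $\tilde{A}^{(n)}-D_\eta-zI$ are symmetric matrices shifted by $-zI$ with $\Im(z)=v>0$, so each resolvent has operator norm at most $1/v$. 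Thus the operator norm of the difference on the right-hand side is at most $\|D_\eta\|_{\text{op}}/v^2$.

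Next I would use the hypothesis that we are on $\Omega_\delta$, where $|\eta_i|<\delta = M/\sqrt{p}$ for all $i$. This immediately gives $\|D_\eta\|_{\text{op}} = \max_i \eta_i^2 \leq \delta^2 = M^2/p$, and also $\|\eta\|^2 = \sum_{i=1}^{n-1}\eta_i^2 \leq (n-1)\delta^2$. Since $p/n=\gamma$, the latter bound is $\mathcal{O}(1)M^2$.

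Combining these via Cauchy–Schwarz,
\[
|r_1|\cdot \mathbf{1}_{\Omega_\delta} \leq \|\eta\|^2 \cdot \frac{\|D_\eta\|_{\text{op}}}{v^2} \cdot \mathbf{1}_{\Omega_\delta} \leq \mathcal{O}(1)M^2 \cdot \frac{M^2}{v^2 p} = \mathcal{O}(1)\frac{M^4}{p}.
\]
Since $M=\sqrt{20\ln p}$ is $o(p^\epsilon)$ for every $\epsilon>0$, we have $M^4/p = o(p^{-1/2})$, hence $|r_1|\cdot\mathbf{1}_{\Omega_\delta}\leq \mathcal{O}(1)p^{-1/2}$ as required. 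No step looks particularly difficult here; the only thing to double-check is that the resolvent $(\tilde{A}^{(n)}-D_\eta-zI)^{-1}$ is well-defined and has norm at most $1/v$, which follows because $\tilde{A}^{(n)}-D_\eta$ is symmetric so its eigenvalues are real, keeping the distance to $z$ at least $v$.
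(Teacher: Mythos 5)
Your proof is correct and follows essentially the same route as the paper's: write $r_1$ via the resolvent identity as $\eta^T(\tilde{A}^{(n)}-D_\eta-zI)^{-1}D_\eta(\tilde{A}^{(n)}-zI)^{-1}\eta$, bound the two resolvents by $1/v$ (both shifted Hermitian matrices), use $s(D_\eta)\le\delta^2$ and $\|\eta\|^2\le (n-1)\delta^2$ on $\Omega_\delta$, and note $M^4/p = o(p^{-1/2})$. The invocation of ``Cauchy--Schwarz'' is really just the standard quadratic-form bound $|\eta^T B\eta|\le s(B)\|\eta\|^2$, which is exactly what the paper uses.
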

\begin{proof}
By 
\begin{align}
\Pr[|\eta_{i}|>\delta] 
&=  2\int_{M}^{\infty}\frac{1}{\sqrt{2\pi}}e^{-\frac{u^{2}}{2}}du \nonumber \\
& \le \frac{1}{\sqrt{2}}e^{-\frac{M^{2}}{2}} \nonumber \\
&= \frac{1}{\sqrt{2}}p^{-10},\quad1\le i\le n-1, \label{eq:eta_delta}
\end{align}
 and the union bound, we have 
\[
\Pr(\Omega_{\delta}^{c})\le(n-1)\Pr[|\eta_{i}|>\delta]\le{\cal O}(1)p^{-9}.
\]
 Now (recall that $s(\cdot)$ denotes the magnitude of the largest singular value/spectral norm of a matrix)
\begin{align*}
|r_{1}| 
& =\left|\eta^{T}(\tilde{A}^{(n)}-D_{\eta}-zI_{n-1})^{-1}\eta-\eta^{T}(\tilde{A}^{(n)}-zI_{n-1})^{-1}\eta\right|\\
 & =\left|\eta^{T}(\tilde{A}^{(n)}-D_{\eta}-zI_{n-1})^{-1}D_{\eta}(\tilde{A}^{(n)}-zI_{n-1})^{-1}\eta\right|\\
 & \le s\left((\tilde{A}^{(n)}-D_{\eta}-zI_{n-1})^{-1}D_{\eta}(\tilde{A}^{(n)}-zI_{n-1})^{-1}\right)|\eta|^{2}.
\end{align*}
Notice that on $\Omega_{\delta}$ 
\[
s(D_{\eta})\le\max_{1\le i\le n-1}\eta_{i}^{2}\le\delta^{2},
\]
also $|\eta|^{2}\le(n-1)\delta^{2}$. At the same time both $s((\tilde{A}^{(n)}-D_{\eta}-zI_{n-1})^{-1})$ and $s((\tilde{A}^{(n)}-zI_{n-1})^{-1})$ is bounded by $\frac{1}{v}$ an absolute constant. Adding together (for Hermitian matrices $A$ and $B$, $s(AB)\le s(A)s(B)$) we have 
\begin{equation}
|r_{1}|{\bf 1}_{\Omega_{\delta}}\le\frac{1}{v^{2}}\delta^{2}\cdot(n-1)\delta^{2}=\frac{M^{4}(n-1)}{v^{2}p^{2}}<{\cal O}(1)p^{-1/2}.
\label{eq:r1bound}
\end{equation}
\end{proof}

\begin{lemma} \label{bound:E|r_2|forA1}
Notations as in Sec. \ref{subsec:A1isMP},
\[
 \mathbb{E}|r_{2}|\le{\cal O}(1)p^{-1/2}.
\]
\end{lemma}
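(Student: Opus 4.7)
The plan is to treat $r_2$ as a centered quadratic form in the Gaussian vector $\eta$ and control it via a direct second moment computation, using Jensen's inequality at the end to pass from $L^2$ to $L^1$.

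First, I would condition on the entire realization of $\tilde{X}_1,\dots,\tilde{X}_{n-1}$, so that the matrix $M := (\tilde{A}^{(n)} - zI_{n-1})^{-1}$ becomes a deterministic (complex-valued) matrix depending only on the conditioning. Since $\tilde{A}^{(n)}$ is real symmetric and $zI_{n-1}$ is diagonal, $M$ is complex symmetric, i.e.\ $M^T = M$. The key independence point (coming from the orthogonal decomposition $X_i = \eta_i (X_n)_0 + \tilde{X}_i$) is that $\eta = (\eta_1,\dots,\eta_{n-1})^T$ has i.i.d.\ $\mathcal{N}(0,p^{-1})$ coordinates and is independent of $M$. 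Consequently,
\[
\mathbb{E}[\eta^T M \eta \mid M] = \sum_{i,j} M_{ij}\,\mathbb{E}[\eta_i \eta_j] = \tfrac{1}{p}\,\mathbf{Tr}(M),
\]
so $r_2$ is a conditionally centered random variable.

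Next I would compute the conditional variance of $\eta^T M \eta$. Using Wick's formula for the fourth mixed moments of independent centered Gaussians,
\[
\mathbb{E}[\eta_i \eta_j \eta_k \eta_l] = \tfrac{1}{p^2}\bigl(\delta_{ij}\delta_{kl} + \delta_{ik}\delta_{jl} + \delta_{il}\delta_{jk}\bigr),
\]
a direct expansion, together with the symmetry $M_{ji}=M_{ij}$, gives
\[
\mathbb{E}[|r_2|^2 \mid M] = \tfrac{2}{p^2}\,\|M\|_F^2,
\]
where $\|M\|_F^2 = \sum_{i} |\lambda_i(\tilde{A}^{(n)}) - z|^{-2} \leq (n-1)/v^2$ by the spectral decomposition of $\tilde{A}^{(n)}$ and $\Im(z)=v>0$. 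The bound on $\|M\|_F^2$ is deterministic, so taking expectations and applying Jensen's inequality,
\[
\mathbb{E}|r_2| \leq \bigl(\mathbb{E}|r_2|^2\bigr)^{1/2} \leq \left(\tfrac{2(n-1)}{p^2 v^2}\right)^{1/2} \leq \mathcal{O}(1)\, p^{-1/2},
\]
using $p/n=\gamma$, which is the desired estimate.

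The main routine point is verifying the factor of $2$ in the variance; the only substantive observation is the complex symmetry of $M$, which ensures that the two Wick pairings (corresponding to $\delta_{ik}\delta_{jl}$ and $\delta_{il}\delta_{jk}$) both yield $\|M\|_F^2$ rather than something involving $\mathbf{Tr}(M^2)$, but either term is uniformly bounded in $n$ since $\|M\| \leq 1/v$. There is no genuine obstacle here; the argument is essentially the Hanson–Wright type concentration in its simplest Gaussian form, and the independence of $\eta$ from $\tilde{A}^{(n)}$ established via the rotational invariance in Eqs.~(\ref{eq:Xn_1})--(\ref{eq:Xn_3}) is what makes the conditioning step clean.
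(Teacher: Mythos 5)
Your proposal is correct and is in substance the same argument the paper uses: both proofs compute the conditional second moment of the centered quadratic form $r_2 = \eta^T M\eta - p^{-1}\mathbf{Tr}(M)$ given $\tilde A^{(n)}$ (so that $M$ is deterministic and independent of $\eta$), arrive at $\mathbb{E}[|r_2|^2\mid M] = \tfrac{2}{p^2}\|M\|_F^2$, bound $\|M\|_F^2\le (n-1)/v^2$ using the spectral decomposition, and pass from $L^2$ to $L^1$ by Jensen/Cauchy–Schwarz. The paper does the variance computation by splitting into diagonal, off-diagonal, and cross terms, while you invoke the Wick formula directly; this is a cosmetic difference. Minor remark: the paper refers to $\tilde B$ as ``Hermitian,'' which is a slight slip — since $z$ is non-real, $\tilde B$ is complex symmetric rather than Hermitian, exactly as you observe — and this is what makes both Wick pairings reduce to $\|M\|_F^2$, yielding the clean factor $2$.
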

\begin{remark}
The technique is similar to the moment bound method in \cite[Chapter 3.3]{bai}, where the main observation is that $\tilde{A}^{(n)}$ is independent of the vector $\eta$.
\end{remark}
\begin{proof} 
Define $(\tilde{A}^{(n)}-zI_{n-1})^{-1}$ as $\tilde{B}$ which is Hermitian, we have 
\begin{align*}
\mathbb{E}|r_{2}|^{2} & =\mathbb{E}\left|\sum_{i=1}^{n-1}\left(\eta_{i}^{2}-\frac{1}{p}\right)\tilde{B}_{ii}+\sum_{i_{1}\ne i_{2}}\eta_{i_{1}}\eta_{i_{2}}\tilde{B}_{i_{1}i_{2}}\right|^{2} \\
 & =\mathbb{E}\sum_{i,i^{'}}\left(\eta_{i}^{2}-\frac{1}{p}\right)\left(\eta_{i^{'}}^{2}-\frac{1}{p}\right)\tilde{B}_{ii}\overline{\tilde{B}_{i^{'}i^{'}}}\label{eq:r2_1}\\
 & \quad +\mathbb{E}\sum_{i}\sum_{i_{1}\ne i_{2}}\left(\eta_{i}^{2}-\frac{1}{p}\right)\eta_{i_{1}}\eta_{i_{2}}\left(\tilde{B}_{ii}\overline{\tilde{B}_{i_{1}i_{2}}}+\overline{\tilde{B}_{ii}}\tilde{B}_{i_{1}i_{2}}\right) \\
 & \quad + \mathbb{E}\sum_{i_{1}\ne i_{2}}\sum_{i_{1}^{'}\ne i_{2}^{'}}\eta_{i_{1}}\eta_{i_{2}}\eta_{i_{1}^{'}}\eta_{i_{2}^{'}}\tilde{B}_{i_{1}i_{2}}\overline{\tilde{B}_{i_{1}^{'}i_{2}^{'}}}. 
\end{align*}
By taking expectation over $\eta_{i}$'s first, we see many terms vanish due to the independence of $\eta_{i_1}$ and $\eta_{i_2}$ for $i_{1}\neq i_{2}$, and what remains gives 
\begin{align*}
\mathbb{E}|r_{2}|^{2} 
& \le\mathbb{E} \left( \sum_{i}\mathbb{E}\left(\eta_{i}^{2}-\frac{1}{p}\right)^{2}|\tilde{B}_{ii}|^{2}+\sum_{i_{1}\ne i_{2}}2\frac{1}{p^{2}}|\tilde{B}_{i_{1}i_{2}}|^{2} \right) \\
 & =\mathbb{E}\frac{2}{p^{2}}\left(\sum_{i}|\tilde{B}_{ii}|^{2}+\sum_{i_{1}\ne i_{2}}|\tilde{B}_{i_{1}i_{2}}|^{2}\right) \\
 & =\mathbb{E}\frac{2}{p^{2}}\mathbf{Tr}(\overline{\tilde{B}}^{T}\tilde{B}).
\end{align*}
Observe that 
\[
\mathbf{Tr}(\overline{\tilde{B}}^{T}\tilde{B})=\sum_{i=1}^{n-1}\frac{1}{|\tilde{\lambda}_{i}-z|^{2}}\le\sum_{i=1}^{n-1}\frac{1}{v^{2}}=\frac{n-1}{v^{2}},
\]
 where $v=\Im(z)>0$ and $\tilde{\lambda_{i}}$ are the eigenvalues of $\tilde{A}^{(n)}$. Then 
\[
\frac{2}{p^{2}}\mathbf{Tr}(\overline{\tilde{B}}^{T}\tilde{B})\le\frac{2}{v^{2}}\frac{n-1}{p^{2}}\le\frac{2}{v^{2}\gamma}\cdot\frac{1}{p},
\]
which means that  
\[
\mathbb{E}|r_{2}|^{2}\le\frac{\mathcal{O}(1)}{p},
\]
so we have $ \mathbb{E}|r_{2}|\le\sqrt{\mathbb{E}|r_{2}|^{2}}\le \mathcal{O}(1)p^{-1/2}$ . 
\qedhere
\end{proof}

\begin{lemma}\label{bound:E|m_A-m_tildeA|}
Notations as in Sec. \ref{subsec:A1isMP},
\[\mathbb{E}|m_{A}(z)-\tilde{m}(z)|\rightarrow0.
\]
\end{lemma}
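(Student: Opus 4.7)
The plan is to write the difference as a telescoping sum with an intermediate term, namely
\[
m_A(z)-\tilde m(z)=\frac{1}{n}\bigl[\mathrm{Tr}(A-zI)^{-1}-\mathrm{Tr}(A^{(n)}-zI_{n-1})^{-1}\bigr]+\frac{1}{n}\bigl[\mathrm{Tr}(A^{(n)}-zI_{n-1})^{-1}-\mathrm{Tr}(\tilde A^{(n)}-zI_{n-1})^{-1}\bigr],
\]
call these $(I)$ and $(II)$. Bounding each piece deterministically will suffice, and in fact we will get pointwise almost-sure convergence, which is stronger than the stated $L^1$ statement.

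For $(I)$, the matrix $A^{(n)}$ is obtained from $A$ by deleting one row and one column, so Cauchy interlacing together with the total-variation argument already used to derive (\ref{eq:Tr(A-z)^-1-Tr(A^(k)-z)^-1}) in the proof of Lemma \ref{lemma:m_A-Em_A} gives $|\mathrm{Tr}(A-zI)^{-1}-\mathrm{Tr}(A^{(n)}-zI_{n-1})^{-1}|\le 4/v$, hence $|(I)|\le 4/(nv)\to 0$ deterministically.

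For $(II)$, using (\ref{eq:Xn_3}) and (\ref{eq:tildeA1}) and the fact that both $A^{(n)}$ and $\tilde A^{(n)}$ have vanishing diagonal, we obtain the identity $A^{(n)}-\tilde A^{(n)}=\eta\eta^{T}-D_{\eta}$. The resolvent identity then yields
\[
(A^{(n)}-zI)^{-1}-(\tilde A^{(n)}-zI)^{-1}=-(A^{(n)}-zI)^{-1}(\eta\eta^{T}-D_{\eta})(\tilde A^{(n)}-zI)^{-1}.
\]
Taking traces and using the duality inequality $|\mathrm{Tr}(CB)|\le\|C\|_{\mathrm{op}}\|B\|_{*}$, together with $\|(\tilde A^{(n)}-zI)^{-1}(A^{(n)}-zI)^{-1}\|_{\mathrm{op}}\le 1/v^{2}$ and the nuclear-norm bound $\|\eta\eta^{T}-D_{\eta}\|_{*}\le\|\eta\eta^{T}\|_{*}+\|D_{\eta}\|_{*}=2|\eta|^{2}$, we get $|\mathrm{Tr}(A^{(n)}-zI_{n-1})^{-1}-\mathrm{Tr}(\tilde A^{(n)}-zI_{n-1})^{-1}|\le 2|\eta|^{2}/v^{2}$. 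Since $\mathbb{E}|\eta|^{2}=(n-1)/p=\mathcal{O}(1)$, it follows that $\mathbb{E}|(II)|\le 2\mathbb{E}|\eta|^{2}/(nv^{2})=\mathcal{O}(1/n)\to 0$, and the lemma follows by combining the two bounds.

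The main technical subtlety lies in the $D_{\eta}$ contribution to $(II)$: the crude estimate $|\mathrm{Tr}(MDM')|\le (n-1)s(M)s(D_{\eta})s(M')$ would give a contribution of order $n\cdot s(D_{\eta})/v^{2}=\mathcal{O}(1)$ (or $\mathcal{O}(\log p)$ after restricting to $\Omega_{\delta}$), which does not vanish once divided by $n$. The nuclear-norm versus operator-norm duality is essential to trade the factor $n$ for $|\eta|^{2}$, whose expectation is already $\mathcal{O}(1)$, so that after the $1/n$ normalization the bound becomes $\mathcal{O}(1/n)$. No large deviation estimate on $\Omega_{\delta}$ is needed for this lemma.
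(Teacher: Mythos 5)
Your proof is correct, and it is genuinely slicker than the paper's. Both arguments split the difference as $m_A - m_{A^{(n)}}$ plus $m_{A^{(n)}} - m_{\tilde A^{(n)}}$ and use the same resolvent identity on the second piece, but the paper handles the $D_\eta$ contribution by restricting to the large-probability event $\Omega_\delta$: there it bounds $\bigl|\mathrm{Tr}((A^{(n)}-zI)^{-1}D_\eta(\tilde A^{(n)}-zI)^{-1})\bigr| \le (n-1)\,s(D_\eta)/v^2 \le (n-1)\delta^2/v^2 = \mathcal{O}(M^2)$, then pays $\frac{2}{v}\Pr(\Omega_\delta^c)$ for the complement, ending up with a bound of order $M^2/n$. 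Your observation that $\|\eta\eta^T - D_\eta\|_* \le 2|\eta|^2$ and the duality $|\mathrm{Tr}(CB)|\le \|C\|_{\mathrm{op}}\|B\|_*$ captures the rank-one and the diagonal pieces in one stroke, trades the factor $n$ for $|\eta|^2$ whose expectation is already $(n-1)/p = \mathcal{O}(1)$, and thereby dispenses with $\Omega_\delta$ and the logarithmic $M^2$ factor altogether, giving a clean $\mathcal{O}(1/n)$ rate. One small overstatement: you say the bounds are ``deterministic,'' but the bound on $(II)$ is the random quantity $2|\eta|^2/(nv^2)$; you do take expectation at the end, so the $L^1$ claim is fine, and the almost-sure version you allude to would follow from the strong law applied to $|\eta|^2/n$, but that extra step is not actually spelled out (nor needed, since the lemma as stated is in $L^1$).
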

\begin{proof}
First, $|m_{A}(z)-m_{A^{(n)}}(z)|\le\frac{4}{v}\cdot n^{-1}\to 0$, due to Eq. (\ref{eq:Tr(A-z)^-1-Tr(A^(k)-z)^-1}). 
Second, we show that $\mathbb{E}|m_{A^{(n)}}-m_{\tilde{A}^{(n)}}|\rightarrow0$. By
\begin{align*}
 & \Tr(A^{(n)}-zI_{n-1})^{-1}-\Tr(\tilde{A}^{(n)}-zI_{n-1})^{-1}\\
 & =\Tr(-(A^{(n)}-zI_{n-1})^{-1}(\eta\eta^{T}-D_{\eta})(\tilde{A}^{(n)}-zI_{n-1})^{-1})\\
 & =-\eta^{T}(A^{(n)}-zI_{n-1})^{-1}(\tilde{A}^{(n)}-zI_{n-1})^{-1}\eta \\
 & \quad +\Tr((A^{(n)}-zI_{n-1})^{-1}D_{\eta}(\tilde{A}^{(n)}-zI_{n-1})^{-1}),
\end{align*}
and using a similar argument as before, we can show that on $\Omega_{\delta}$
\[
\left|\eta^{T}(A^{(n)}-zI_{n-1})^{-1}(\tilde{A}^{(n)}-zI_{n-1})^{-1}\eta\right|\le\frac{1}{v^{2}}|\eta|^{2}\le\frac{1}{v^{2}}(n-1)\delta^{2}\le{\cal O}(1)M^{2},
\]
 and 
\begin{align*}
 & \left|\Tr((A^{(n)}-zI_{n-1})^{-1}D_{\eta}(\tilde{A}^{(n)}-zI_{n-1})^{-1})\right|\\
 & \le(n-1)s((A^{(n)}-zI_{n-1})^{-1}D_{\eta}(\tilde{A}^{(n)}-zI_{n-1})^{-1})\\
 & \le\frac{1}{v^{2}}(n-1)\delta^{2}={\cal O}(1)M^{2}.
\end{align*}
As a result, 
\begin{align*}
\mathbb{E}|m_{A^{(n)}}-m_{\tilde{A}^{(n)}}| & =\frac{2}{v}\Pr(\Omega_{\delta}^{c})+\mathbb{E}|m_{A^{(n)}}-m_{\tilde{A}^{(n)}}|\cdot{\bf 1}_{\Omega_{\delta}}\\
 & \le{\cal O}(1)p^{-9}+\frac{1}{n}{\cal O}(1)M^{2}
\end{align*}
 which goes to 0 as $n,p\rightarrow\infty$ with $p/n=\gamma$. 
\end{proof}

\begin{lemma}\label{bound: |(1+gamma^-1(Em))^-1|}
Notations as in Sec. \ref{subsec:A1isMP}, on $\Omega_{r}\cap\Omega_{\delta}$,
\[
\left|\left(1+\frac{1}{\gamma}\mathbb{E}\tilde{m}(z)\right)^{-1}\right| \le2M^{'}.
\]
\end{lemma}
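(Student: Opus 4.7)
The plan is to transfer the lower bound on the random denominator $|1 + \eta^{T}(\tilde{A}^{(n)} - D_{\eta} - zI_{n-1})^{-1}\eta|$ supplied by Lemma~\ref{bound:|eta(A-z)^-1eta|<M'} onto the deterministic quantity $|1 + \gamma^{-1}\mathbb{E}\tilde{m}(z)|$, exploiting that the two differ by a small remainder on $\Omega_r \cap \Omega_\delta$. Adding $1$ to both sides of the decomposition in Eq.~(\ref{eq:eta(tildeA-D-z)^-1eta}) and rearranging gives
\[
1 + \frac{1}{\gamma}\mathbb{E}\tilde{m}(z) = \Bigl(1 + \eta^{T}(\tilde{A}^{(n)} - D_{\eta} - zI_{n-1})^{-1}\eta\Bigr) - \Bigl(\frac{1}{\gamma}\tilde{r} + r_1 + r_2\Bigr),
\]
where $r_1, r_2$ are the pieces of $r$ introduced inside the proof of Lemma~\ref{lemma:E|m_A-RHS(tildem)|}.

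Next I would estimate each piece separately. On $\Omega_\delta$, Lemma~\ref{bound:|eta(A-z)^-1eta|<M'} already gives $|1 + \eta^{T}(\tilde{A}^{(n)} - D_{\eta} - zI_{n-1})^{-1}\eta| \ge 1/M'$. For the remainder, the definition of $\Omega_r$ provides $|\tilde{r}| < p^{-1/4}$ and $|r_2| < p^{-1/4}$ directly, while on $\Omega_\delta$ Lemma~\ref{lemma:r1Omegadelta} contributes $|r_1| \le \mathcal{O}(1)p^{-1/2}$. Combining these, on $\Omega_r \cap \Omega_\delta$ the total remainder is at most $\mathcal{O}(1)p^{-1/4}$, and the reverse triangle inequality yields
\[
\Bigl|1 + \frac{1}{\gamma}\mathbb{E}\tilde{m}(z)\Bigr| \ge \frac{1}{M'} - \mathcal{O}(1)p^{-1/4}.
\]

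The one step worth scrutinising, and the only real obstacle, is confirming that $1/M'$ comfortably dominates the error $\mathcal{O}(1)p^{-1/4}$. Since $M' = 1 + \mathcal{O}(1)M^2$ with $M = \sqrt{20\ln p}$, we have $M' = \mathcal{O}(\ln p)$, so $M' \cdot p^{-1/4} \to 0$ as $p \to \infty$. Hence for all $p$ large enough, $\mathcal{O}(1)p^{-1/4} \le 1/(2M')$, whereupon $|1+\gamma^{-1}\mathbb{E}\tilde{m}(z)| \ge 1/(2M')$ and therefore $|(1+\gamma^{-1}\mathbb{E}\tilde{m}(z))^{-1}| \le 2M'$, as claimed. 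Note that because the bounded quantity is actually deterministic, it is enough for $\Omega_r \cap \Omega_\delta$ to be nonempty at such $p$, which is guaranteed by the tail bounds $\Pr(\Omega_r^c), \Pr(\Omega_\delta^c) \to 0$ recorded in the surrounding text.
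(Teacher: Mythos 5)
Your proof is correct and follows essentially the same route as the paper: rearrange the decomposition in Eq.~(\ref{eq:eta(tildeA-D-z)^-1eta}), invoke the lower bound from Lemma~\ref{bound:|eta(A-z)^-1eta|<M'} on the random denominator, and observe that the remainder ${\cal O}(1)p^{-1/4}$ is dominated by $1/(2M')$ since $M' = {\cal O}(\ln p)$. Your explicit remark that $|1+\gamma^{-1}\mathbb{E}\tilde{m}(z)|$ is deterministic — so it suffices that $\Omega_r \cap \Omega_\delta$ be nonempty — is a clean clarification that the paper's proof leaves implicit.
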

\begin{proof}
On $\Omega_{r}\cap\Omega_{\delta}$,
with Eq. (\ref{eq:r1bound}) $|r_{1}|<{\cal O}(1)p^{-1/2}$ thus
$|r|\le|r_{1}|+|r_{2}|$ is bounded by ${\cal O}(1)p^{-1/4}$, 
\begin{align*}
\left|\frac{1}{1+\frac{1}{\gamma}\mathbb{E}\tilde{m}(z)}\right| & =\left|\frac{1}{1+\eta^{T}(\tilde{A}^{(n)}-D_{\eta}-zI_{n-1})^{-1}\eta-r-\frac{1}{\gamma}\tilde{r}}\right|\\
 & \le\frac{1}{\left|1+\eta^{T}(\tilde{A}^{(n)}-D_{\eta}-zI_{n-1})^{-1}\eta\right|-|r|-\frac{1}{\gamma}|\tilde{r}|}\\
 & \le2M^{'}
\end{align*}
 as $\left|1+\eta^{T}(\tilde{A}^{(n)}-D_{\eta}-zI_{n-1})^{-1}\eta\right|\ge1/M^{'}\gg(|r|+\frac{1}{\gamma}|\tilde{r}|)$,
where the latter is bounded by ${\cal O}(1)p^{-1/4}$. 
\end{proof}

\begin{lemma}\label{bound:|eta(A-z)^-1eta|<M'} 
Notation as in Sec. \ref{subsec:A1isMP}, on $\Omega_{\delta}$, 
both $|\eta^{T}(A^{(n)}-zI_{n-1})^{-1}\eta|$ and 
$|(1+\eta^{T}(\tilde{A}^{(n)}-D_{\eta}-zI_{n-1})^{-1}\eta)^{-1}|$ 
are bounded by $M^{'}$.
\end{lemma}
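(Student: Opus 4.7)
The plan is to obtain both bounds by combining the standard resolvent norm bound $\|(H-zI)^{-1}\|\le 1/v$ for Hermitian $H$ with the deterministic smallness of $|\eta|^2$ guaranteed on $\Omega_\delta$, followed by a use of the Sherman--Morrison identity already recorded in Eq.~(\ref{eq: A1denominator}).

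First I would control $|\eta^T(A^{(n)}-zI_{n-1})^{-1}\eta|$. On $\Omega_\delta$ each $|\eta_i|\le\delta=M/\sqrt{p}$, so
\[
|\eta|^{2}\le (n-1)\delta^{2} = \frac{(n-1)M^{2}}{p} = \mathcal{O}(1)\,M^{2},
\]
using $p/n=\gamma$. Since $A^{(n)}$ is Hermitian and $\Im(z)=v>0$, Cauchy--Schwarz combined with the spectral bound on the resolvent gives
\[
|\eta^T(A^{(n)}-zI_{n-1})^{-1}\eta|\le s\!\left((A^{(n)}-zI_{n-1})^{-1}\right)|\eta|^{2}\le v^{-1}\mathcal{O}(1)M^{2},
\]
which is bounded by $M':=1+\mathcal{O}(1)M^{2}$ as desired.

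For the second quantity I would exploit the rearrangement of Sherman--Morrison already used to pass from the first to the second line of Eq.~(\ref{eq: A1denominator}). Since $A^{(n)}=(\tilde A^{(n)}-D_\eta)+\eta\eta^{T}$, that identity can be rewritten as
\[
\left(1+\eta^T(\tilde A^{(n)}-D_\eta-zI_{n-1})^{-1}\eta\right)^{-1} = 1-\eta^T(A^{(n)}-zI_{n-1})^{-1}\eta.
\]
Taking moduli and applying the triangle inequality, together with the bound just obtained for the right-hand side, yields the same estimate $M'$ for the left-hand side.

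There is no real obstacle here: the only nontrivial observation is that inverting the Sherman--Morrison denominator turns the quantity we wish to bound into $1$ minus a resolvent quadratic form that was already controlled in the first step, so no separate imaginary-part argument or extra concentration is needed. This lemma is essentially a bookkeeping device whose role is to keep denominators away from zero in the proof of Lemma \ref{lemma:E|m_A-RHS(tildem)|}.
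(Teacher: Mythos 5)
Your proof is correct and matches the paper's argument essentially line for line: the bound $|\eta|^2\le(n-1)\delta^2=\mathcal{O}(1)M^2$ on $\Omega_\delta$ combined with the resolvent norm bound $1/v$ for the first quantity, and the Sherman--Morrison rearrangement $\bigl(1+\eta^T(\tilde A^{(n)}-D_\eta-zI_{n-1})^{-1}\eta\bigr)^{-1}=1-\eta^T(A^{(n)}-zI_{n-1})^{-1}\eta$ followed by the triangle inequality for the second.
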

\begin{proof}
On $\Omega_{\delta}$, 
$|\eta^{T}(A^{(n)}-zI_{n-1})^{-1}\eta|\leq s((A^{(n)}-zI_{n-1})^{-1})|\eta|^{2}\leq\frac{1}{v}\delta^{2}(n-1)
={\cal O}(1)M^{2}$,
and also
\begin{align*}
 & \left|\left(1+\eta^{T}(\tilde{A}^{(n)}-D_{\eta}-zI_{n-1})^{-1}\eta\right)^{-1}\right|\\
 & =\left|1-\eta^{T}(\eta\eta^{T}+\tilde{A}^{(n)}-D_{\eta}-zI_{n-1})^{-1}\eta\right|\\
 & \le1+|\eta^{T}(A^{(n)}-zI_{n-1})^{-1}\eta|\\
 & \le1+{\cal O}(1)M^{2}:=M^{'}.\qedhere
\end{align*}
\end{proof}

\section{Lemma in Sec. 3}\label{sec:A3}
\begin{lemma}\label{lemma:alptoalN}
Model and notations as in Sec. \ref{subsec:modelnotation}. Due to Eqn. (\ref{eq:Exi^k}), the result in Lemma \ref{lemma:Plpmatchhl} holds.

Suppose that $k(x;p)$ is in ${\cal H}_{\mathcal{N}}$ and ${\cal H}_p$ for all $p$, and satisfies\[
\int_{\mathbb{R}} k(x;p)^2 |q_p(x)- q(x)| dx \to 0, \quad p \to \infty.
\]
Let 
\[\begin{split}
b_{l,p} & = \int_{\mathbb{R}} k(x;p)h_l(x) q(x) dx, \\
a_{l,p} & = \int_{\mathbb{R}} k(x;p) P_{l,p}(x) q_p(x) dx,
\end{split}
\] for $ l = 0, 1, \cdots$. Then for each $l$, $|b_{l,p}- a_{l,p}| \to 0 $ as $p \to \infty$.
\end{lemma}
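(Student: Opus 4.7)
The plan is to split the difference $a_{l,p}-b_{l,p}$ into two terms, each of which can be controlled by one of the two ingredients available to us: the coefficient comparison between $P_{l,p}$ and $h_l$ provided by Lemma \ref{lemma:Plpmatchhl}, and the hypothesis
\[
\int_{\mathbb{R}} k(x;p)^2\,|q_p(x)-q(x)|\,dx \to 0.
\]
Specifically, I will write
\[
a_{l,p}-b_{l,p}
= \underbrace{\int_{\mathbb{R}} k(x;p)\bigl(P_{l,p}(x)-h_l(x)\bigr) q_p(x)\,dx}_{\mathrm{I}_p}
+ \underbrace{\int_{\mathbb{R}} k(x;p)\,h_l(x)\bigl(q_p(x)-q(x)\bigr)dx}_{\mathrm{II}_p}
\]
and show each term vanishes as $p\to\infty$.

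For $\mathrm{I}_p$, Lemma \ref{lemma:Plpmatchhl} gives $P_{l,p}(x)-h_l(x)=\sum_{j=0}^{l}(\delta_{l,p})_j x^j$ with $\max_j|(\delta_{l,p})_j|\le \mathcal{O}_l(1)p^{-1}$. Applying Cauchy--Schwarz with respect to the measure $q_p(x)dx$,
\[
\Bigl|\int k(x;p)\,x^j\,q_p(x)dx\Bigr|
\le \Bigl(\int k(x;p)^2 q_p(x)dx\Bigr)^{1/2} \bigl(\mathbb{E}\xi_p^{2j}\bigr)^{1/2}.
\]
The first factor is $\|k(\cdot;p)\|_{\mathcal{H}_p}$, which is bounded for large $p$ since the hypothesis together with $k\in\mathcal{H}_\mathcal{N}$ forces $\int k(x;p)^2 q_p(x)dx \to \int k(x;p)^2 q(x)dx$ (finite), and the second factor is bounded by Eqn. (\ref{eq:Exi^k}). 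Summing over $j=0,\ldots,l$ yields $|\mathrm{I}_p|\le \mathcal{O}_l(1)p^{-1}\to 0$.

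For $\mathrm{II}_p$, I apply Cauchy--Schwarz against the (signed) measure $|q_p-q|\,dx$:
\[
|\mathrm{II}_p|
\le \Bigl(\int k(x;p)^2\,|q_p-q|\,dx\Bigr)^{1/2}\Bigl(\int h_l(x)^2\,|q_p-q|\,dx\Bigr)^{1/2}.
\]
The first factor tends to $0$ by hypothesis. The second factor is bounded uniformly in $p$, since $h_l^2$ is a polynomial of degree $2l$ and $\int h_l^2 |q_p-q|dx \le \int h_l^2 q_p\,dx + \int h_l^2 q\,dx$, both controlled by the moments of $\xi_p$ and $\zeta$ via Eqn. (\ref{eq:Exi^k}). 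Thus $\mathrm{II}_p\to 0$, completing the argument. There is no serious obstacle; the only point to be careful about is ensuring $\|k(\cdot;p)\|_{\mathcal{H}_p}$ stays bounded, which I extract from the hypothesis itself rather than from any additional assumption.
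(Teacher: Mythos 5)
Your proposal is correct and takes essentially the same two-term decomposition and Cauchy--Schwarz strategy as the paper; the only cosmetic difference is that for $\mathrm{I}_p$ you apply Cauchy--Schwarz separately to each monomial $x^j$ in $P_{l,p}-h_l$, whereas the paper applies it once to the whole polynomial difference, bounding $\int(h_l-P_{l,p})^2 q_p\,dx$ directly — both yield the same $\mathcal{O}_l(1)p^{-1}$ rate.
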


\begin{proof}
\[\begin{split}
& |b_{l,p}  -  a_{l,p}|  \\
&= |\int_\mathbb{R} kh_l (q-q_p) dx + \int_\mathbb{R} k(h_l-P_{l,p})q_p dx | \\
& \leq \int |k h_l| |q-q_p| dx + \int |k||h_l-P_{l,p}|q_p dx \\
& : = (1) + (2).
\end{split}
\]
For (1), by Cauchy-Swarchz\[
(1)^2 \leq (\int k^2 |q-q_p| dx)(\int h_l^2 |q-q_p| dx),
\] where
\[
\int h_l^2 |q-q_p| dx \leq \int h_l^2 q dx + \int h_l^2 q_p dx = 1 + (1 +\mathcal{O}_l(1)p^{-1}),
\] which is bounded as $p \to \infty$, and $\int k^2 |q-q_p| dx \to 0$, thus $(1) \to 0$. For (2), 
\[
(2)^2 \leq (\int k^2 q_p dx)(\int (h_l-P_{l,p})^2 q_p dx),
\]  where $\int k^2 q_p dx \to \int k^2 q dx $ which is bounded, and by Lemma \ref{lemma:Plpmatchhl} \[
h_l(x) - P_{l,p}(x) = \sum_{j=0}^l (\delta_{l,p})_j x^j , \quad \max_{0\leq j \leq l} |(\delta_{l,p})_j| < \mathcal{O}_l(1)p^{-1},
\] thus \[
\left(\int (h_l-P_{l,p})^2 q_p dx \right)^{1/2} \leq \mathcal{O}_l(1)p^{-1},
\] so $(2) \to 0$.
\end{proof}

\begin{lemma}\label{lemma:k(x;p)=k(x)}
Model and notations as in Sec. \ref{subsec:mainthm}, and suppose that $k(x)$ is as in Remark \ref{rk:k(x;p)=k(x)}.
Eqn. (\ref{eq:with|qp(x)-q(x)|}) implies that $ \mathbb{E}k(\xi_p)^2 \to \mathbb{E}k(\zeta)^2 = \nu_\mathcal{N}$. Without loss of generality, $k(x)$ is in ${\cal H}_p$ for all $p$. Define $b_{l,p}$ and $a_{l,p}$ as in Lemma \ref{lemma:alptoalN}, and notice that since $k(x)$ does not depend on $p$, $b_{l,p} = b_l$ independent of $p$.
  
Then conditions {\bf (C.Variance)}, {\bf (C.$p$-Unform)} and {\bf (C.$\alpha_1$)} are satisfied by $k(x;p) =  k(x)-a_{0,p} $. Also, $\nu_p  \to  \nu_\mathcal{N}$, and $a_{1,p}\to  a_\mathcal{N} = b_1$.
\end{lemma}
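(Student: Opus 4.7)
The plan is to verify the three conditions (C.Variance), (C.$a_1$), and (C.$p$-Uniform) for the shifted kernel $k(x;p) := k(x) - a_{0,p}$. A convenient preliminary observation is that the expansion coefficients of $k(x;p)$ against $\{P_{l,p}\}$ equal $a_{l,p}$ for $l \ge 1$ and vanish for $l=0$, so every claim reduces to a statement about the coefficients $a_{l,p}$ of the unshifted function $k(x)$.

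I would first dispose of the preliminary assertion $\mathbb{E}k(\xi_p)^2 \to \nu_\mathcal{N}$. Since
\[
\bigl|\mathbb{E}k(\xi_p)^2 - \mathbb{E}k(\zeta)^2\bigr| \le \int k(x)^2\, |q_p(x)-q(x)|\, dx,
\]
Eqn. (\ref{eq:with|qp(x)-q(x)|}) gives this convergence, and in particular $k \in \mathcal{H}_p$ for all large $p$. Invoking Lemma \ref{lemma:alptoalN} then produces $a_{l,p} \to b_l$ for each fixed $l$, so $a_{0,p} \to b_0 = 0$ (so no terms are being ``lost'' in the shift as $p\to\infty$) and $a_{1,p} \to b_1 = a_\mathcal{N}$, which is condition (C.$a_1$). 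For (C.Variance), orthonormality of $\{P_{l,p}\}$ combined with the relation between coefficients and $\mathcal{H}_p$-norm yields
\[
\nu_p = \sum_{l \ge 1} a_{l,p}^2 = \mathbb{E}k(\xi_p)^2 - a_{0,p}^2 \to \nu_\mathcal{N} - 0 = \nu_\mathcal{N}.
\]

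The main technical step is the uniform tail estimate in (C.$p$-Uniform). The strategy is a truncation argument based on Bessel's inequality in $\mathcal{H}_p$, which gives, for any fixed $L$,
\[
\sum_{l>L} a_{l,p}^2 \le \mathbb{E}k(\xi_p)^2 - \sum_{l=0}^{L} a_{l,p}^2.
\]
Sending $p \to \infty$ with $L$ held fixed, the right-hand side converges to $\nu_\mathcal{N} - \sum_{l=0}^{L} b_l^2 = \sum_{l>L} b_l^2$, where completeness of the Hermite basis in $\mathcal{H}_\mathcal{N}$ supplies the full Parseval identity $\sum_l b_l^2 = \nu_\mathcal{N}$. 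Given $\epsilon>0$, I would choose $L$ so that $\sum_{l>L} b_l^2 < \epsilon/2$ (possible since the series is convergent), and then $p_0$ large enough that the difference from the limit is at most $\epsilon/2$ for $p \ge p_0$, giving $\sum_{l>L} a_{l,p}^2 < \epsilon$ uniformly.

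The subtlety to watch for is that orthogonality with respect to $q_p$ differs from orthogonality with respect to $q$, so there is no direct inequality between the tails $\sum_{l>L} a_{l,p}^2$ and $\sum_{l>L} b_l^2$. The Bessel bound above neatly sidesteps any need for completeness of $\{P_{l,p}\}$ in $\mathcal{H}_p$: it is the fact that the full $\mathcal{H}_p$-norm of $k$ converges to its $\mathcal{H}_\mathcal{N}$-norm (via Eqn. (\ref{eq:with|qp(x)-q(x)|})) that closes the gap and makes the tail estimate uniform in $p$.
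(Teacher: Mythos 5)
Your argument matches the paper's proof step for step: the shift-by-$a_{0,p}$ reduction, the use of Lemma \ref{lemma:alptoalN} to get $a_{0,p}\to 0$ and $a_{1,p}\to b_1$, the variance identity $\nu_p = \mathbb{E}k(\xi_p)^2 - a_{0,p}^2$, and the tail estimate via $\mathbb{E}k(\xi_p)^2 - \sum_{l\le L} a_{l,p}^2 \to \sum_{l>L} b_l^2$. The one place you are actually a bit more careful than the paper is the tail bound: the paper writes $\sum_{l>L} a_{l,p}^2 = \mathbb{E}k(\xi_p)^2 - \sum_{l=0}^L a_{l,p}^2$, which is Parseval in $\mathcal{H}_p$ and so quietly presumes completeness of the polynomial system $\{P_{l,p}\}$ there, whereas your Bessel-inequality version only needs the inequality $\le$ and still controls exactly the quantity $\|k - k_L\|_{\mathcal{H}_p}^2 = \|k\|_{\mathcal{H}_p}^2 - \sum_{l\le L}a_{l,p}^2$ that the main theorem's Step 1 actually uses. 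That completeness does in fact hold here (the subexponential tail of $q_p$ from Lemma \ref{lemma:xipLargeDeviation} makes the moment problem determinate), but your phrasing avoids having to invoke it.
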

\begin{proof}
By definition $\mathbb{E}k(\xi_p;p) =0$. In this case,\[
\nu_p  = \mathbb{E}k(\xi_p;p)^2 = \mathbb{E}k(\xi_p)^2 - a_{0,p}^2.
\] Since Lemma \ref{lemma:alptoalN} applies to $k(x)$, we know that \[
a_{0,p}\to b_0 = \mathbb{E}k(\zeta) =0.
\] Together with the fact that $\mathbb{E}k(\xi_p)^2 \to \mathbb{E}k(\zeta)^2 = \nu_\mathcal{N}$, we know that $\nu_p  \to \nu_\mathcal{N}$ as $p \to \infty$. Thus {\bf (C.Variance)} is satisfied.

Also $a_{1,p} \to b_1$ which is a constant, thus {\bf (C.$\alpha_1$)} holds.

For  {\bf (C.$p$-Unform)}  to be satisfied, it suffices to show that $\sum_{l=L+1}^\infty a_{l,p}^2$ can be made $p$-uniformly small. Notice that 
\[\begin{split}
\sum_{l=0}^\infty a_{l,p}^2 &= \mathbb{E}k(\xi_p)^2 \to \nu_\mathcal{N},\\
\sum_{l=0}^\infty b_l^2     &= \nu_\mathcal{N},
\end{split}
\] and meanwhile for each $l$, $a_{l,p} \to b_l$ by Lemma \ref{lemma:alptoalN}, thus for any finite $L$ \[
\begin{split}
\sum_{l=L+1}^\infty a_{l,p}^2 
& = \mathbb{E}k(\xi_p)^2 - \sum_{l=0}^L a_{l,p}^2 \\
& \to  \nu_\mathcal{N} - \sum_{l=0}^L b_l^2 = \sum_{l=L+1}^\infty b_{l}^2,
\end{split}
\] which can be made small by choosing $L$ large independently of $p$.
\end{proof}

\begin{lemma}\label{lemma:fsmooth}
Notations as in Sec. \ref{subsec:mainthm}. If $f(\xi;p)=f(\xi)$ is $C^{1}$ at $\xi=0$, then the theorem applies and $a^2 = \nu$. Specifically, $a = f^{'}(0)$.
\end{lemma}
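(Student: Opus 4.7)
The plan is to Taylor-expand $f$ to first order at $0$ and show that the nonlinear remainder contributes vanishingly to both $a_{1,p}$ and $\nu_p$. By Remark \ref{rk:a0=0}, I would at the outset subtract the mean and assume $a_{0,p}=0$, so that the discussion below concerns the mean-zero version of $f$.

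First I would use $C^1$-smoothness at $0$ to write $f(y)=f(0)+f'(0)y+\tilde f(y)$ with $\tilde f(y)/y\to 0$ as $y\to 0$. Rescaling via $k(x;p)=\sqrt p\,f(x/\sqrt p)$ and absorbing the constant $\sqrt p\,f(0)$ into the mean correction, this decomposes as $k(x;p)=f'(0)\,x+\tilde k(x;p)$ with $\tilde k(x;p):=\sqrt p\,\tilde f(x/\sqrt p)$. Since $P_{1,p}(x)=x$ and $\mathbb E\xi_p=0$, $\mathbb E\xi_p^2=1$, a direct expansion yields
\[
a_{1,p}=f'(0)+\mathbb E[\xi_p\,\tilde k(\xi_p;p)],\qquad \nu_p=(f'(0))^2+2f'(0)\,\mathbb E[\xi_p\,\tilde k(\xi_p;p)]+\mathbb E[\tilde k(\xi_p;p)^2].
\]
Both claims $a_{1,p}\to f'(0)$ and $\nu_p\to(f'(0))^2$ therefore reduce to the single estimate $\mathbb E[\tilde k(\xi_p;p)^2]\to 0$, which controls the variance directly and, by Cauchy--Schwarz with $\mathbb E\xi_p^2=1$, also bounds $|\mathbb E[\xi_p\,\tilde k(\xi_p;p)]|$.

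To prove this, note that with $\eta=\xi_p/\sqrt p = X^T Y$ of variance $p^{-1}$, one has $\mathbb E[\tilde k(\xi_p;p)^2]=p\,\mathbb E[\tilde f(\eta)^2]$. Writing $\tilde f(\eta)^2=\eta^2\,\psi(\eta)$ where $\psi(\eta)\to 0$ as $\eta\to 0$, I would split the expectation on $\{|\eta|\le\delta\}$ versus its complement. On the inner region, $|\psi|\le\psi_0(\delta)$ with $\psi_0(\delta)\to 0$ as $\delta\to 0$, contributing at most $\psi_0(\delta)\cdot p\,\mathbb E\eta^2=\psi_0(\delta)$. On the complement, the Gaussian tail bound $\Pr(|\eta|>\delta)\le e^{-cp\delta^2}$ (as in Eqn.~(\ref{eq:eta_delta})) combined with $p$-uniform $L^2$-control of $k(\cdot;p)$ (implicit in the $\mathcal H_p$-hypothesis of Thm.~\ref{thm:main}) forces the tail contribution to vanish as $p\to\infty$ for each fixed $\delta$; sending $p\to\infty$ and then $\delta\to 0$ gives the desired limit.

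Once $\nu_p\to(f'(0))^2$ and $a_{1,p}\to f'(0)$ are in hand, condition {\bf (C.Variance)} holds with $\nu=(f'(0))^2$ and condition {\bf (C.$a_1$)} with $a=f'(0)$. Condition {\bf (C.$p$-Uniform)} then follows from the Parseval identity $\sum_{l\ge 2}a_{l,p}^2=\nu_p-a_{1,p}^2\to 0$, so that $L=1$ suffices uniformly in $p$ for large $p$. Thus Thm.~\ref{thm:main} applies with $a^2=\nu=(f'(0))^2$, and in this degenerate regime the cubic Eqn.~(\ref{eq:mzeqn}) collapses to the M.P. equation~(\ref{eq:mz1}), confirming that the limiting spectrum is the M.P. distribution in agreement with \cite{karoui10}. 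The main obstacle is the tail control on $\tilde f(\eta)^2$ outside a neighborhood of $0$, which requires some global integrability of $f$ beyond just $C^1$-smoothness at $0$; this is precisely what the implicit $\mathcal H_p$-membership assumption built into the theorem supplies.
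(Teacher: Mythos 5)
Your decomposition of $k(x;p)$ into its linear part $f'(0)x$ plus the Taylor remainder $\tilde k(x;p)$, and the reduction of both claims to $\mathbb E[\tilde k(\xi_p;p)^2]\to 0$, matches the spirit of the paper's estimate $|r(\xi)|\le\epsilon|\xi|$. But there is a genuine gap in how you handle the tail, and it is one you half-acknowledge at the end. The lemma's only hypothesis is that $f$ is $C^1$ at $\xi=0$; nothing whatsoever is assumed about the behavior of $f$ away from the origin. Your outer-region bound needs $p\,\mathbb E[\tilde f(\eta)^2\,\mathbf 1_{|\eta|>\delta}]\to 0$ for fixed $\delta$, which requires some $p$-uniform integrability of $f$ in the tail. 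You try to supply this by appealing to ``the implicit $\mathcal H_p$-hypothesis of Thm.~\ref{thm:main},'' but that is circular: $\mathcal H_p$-membership of $k(\cdot;p)$ is part of condition {\bf (C.Variance)}, which is precisely what the lemma asks you to verify. Moreover, $\mathcal H_p$-membership alone is a $p$-by-$p$ statement and does not give the $p$-uniform tail control you need (i.e.\ $p\,\mathbb E[f(\eta)^2\mathbf 1_{|\eta|>\delta}]$ could be finite for each $p$ yet unbounded in $p$). For general $f$ that is merely $C^1$ at $0$ the conditions of Thm.~\ref{thm:main} may in fact fail for $k(x;p)=\sqrt p f(x/\sqrt p)$.

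The paper avoids this trap with a truncation step that you omit: it first replaces $f$ by $\hat f(\xi;p)=f(\xi)\mathbf 1_{\{|\xi|\le\delta(p)\}}$ with $\delta(p)=M/\sqrt p$, shows via the union bound $\Pr[\exists\,i\ne j,\ |X_i^TX_j|>\delta]\le\mathcal O(1)p^{-7}$ that $\mathbb E|m_A(z)-m_{\hat A}(z)|\to 0$, and only then verifies the theorem's conditions for $\hat f$. Since $\hat f$ is supported in a shrinking neighborhood of $0$ where the $C^1$ expansion is in force, the resulting rescaled kernel $\hat k$ is controlled with no global assumptions on $f$. Note that this also reinterprets the lemma's phrase ``the theorem applies'' correctly: the conclusion of Thm.~\ref{thm:main} is shown to hold for $A$ by transfer from $\hat A$, not by directly checking {\bf (C.Variance)} etc.\ for the original $k(x;p)$, which may not satisfy them. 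Your argument is essentially correct for bounded $f$ (then $p\mathbb E[\tilde f(\eta)^2\mathbf 1_{|\eta|>\delta}]\le C p e^{-cp\delta^2}\to 0$), but to cover the full generality of the lemma you need to insert the truncation step at the outset.
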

\begin{proof}
We first truncate $f(\xi)$ to be $\hat{f}(\xi;p) = f(\xi)\mathbf{1}_{\{|\xi| \leq \delta\}} $, where $\delta = \delta(p)=\frac{M}{\sqrt{p}}$, $M=\sqrt{20\ln p}$. Using a similar argument as in Lemma \ref{pf:omega_delta_p-9}, we have\[
\Pr [ \exists i\neq j, ~ |X_i^TX_j| > \delta ] \le {\cal O}(1)p^{-7}.
\] Thus, if we denote $\hat{A}$ as the random kernel matrix with kernel function $\hat{f}$, then for fixed $z=u+iv$ \[
\mathbb{E}|m_A(z) - m_{\hat{A}}(z)| \leq \frac{2}{v}\Pr [ \exists i\neq j, ~ |X_i^TX_j| > \delta ] \to 0,
\] where $m_A(z)$ and $m_{\hat{A}}(z)$ are the Stieltjes transforms of $A$ and $\hat{A}$ respectively. Since the convergence of $\mathbb{E}m_A(z)$ implies the convergence of the spectral density, if suffices to show that the claim in the lemma holds for $\hat{f}(\xi;p)$.

Since $f(\xi)$ is $C^1$ at $\xi =0$, for any $\epsilon > 0$ there exists a neighborhood $[-R, R]$  on which 
\[f(\xi) =f(0)+f^{'}(0)\xi+ r(\xi),\quad |r(\xi)| \leq \epsilon |\xi|.
\]  Since $\delta \to 0$ when $p \to \infty$, we assume that $p$ is large enough so that $\delta < R$. Let $k(x;p) = \sqrt{p}\hat{f}(x/\sqrt{p})$, and assume that $f(0) = 0$ since it only contributes to $\mathbb{E}k(\xi_p,p)=a_{0,p}$, we have \[
\begin{split}
k(x;p) 
&= \left(f^{'}(0)x + \sqrt{p}r(\frac{x}{\sqrt{p}}) \right)\mathbf{1}_{\{|x| \leq M\}}\\
&:= k_1 + k_2,
\end{split}
\] where $|k_2(x;p)| \leq \epsilon |x|$, so $\mathbb{E}k_2(\xi_p;p)^2 \leq \epsilon^2$. Thus, the $L^2$ norm of $k_2$ is arbitrarily small in ${\cal H}_p$, and $\nu_p = Var(k(\xi_p;p))$ and $a_{1,p} = \mathbb{E}\xi_p(k(\xi_p;p) - \mathbb{E}k(\xi_p;p))$ are decided by $k_1$. For $k_1(x;p) = f^{'}(0)x\mathbf{1}_{\{|x| \leq M\}}$, $\mathbb{E}k_1(\xi_p;p) =0$, and since $M\to \infty$ as $p \to \infty$, $\mathbb{E}k_1(\xi_p;p)^2 \to (f^{'}(0))^2 $ and $\mathbb{E}\xi_p k_1(\xi_p;p) \to f^{'}(0)$. Thus $\nu_p \to (f^{'}(0))^2 = \nu$, and $a_{1,p} \to f^{'}(0) = a$. 
 \end{proof}
 
\begin{lemma}\label{lemma:xipLargeDeviation}
Let $\xi_p$ be as in Sec. \ref{subsec:mainthm}, and equivalently $\xi_p = p^{-1/2}\sum_{i=1}^p x_iy_i$ where $x_i$ and $y_i$ i.i.d.$ \sim \mathcal{N}(0,1)$.
Then for $p > 2$, \[
\Pr[ |\xi_p| > R] \leq (2e)e^{-R}.
\]
\end{lemma}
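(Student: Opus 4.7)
The natural route is a Chernoff bound based on the moment generating function of $\xi_p$. Write $\xi_p = p^{-1/2}\sum_{i=1}^p x_i y_i$ with $x_i, y_i$ i.i.d.\ $\mathcal{N}(0,1)$. First I would compute the MGF of a single product $x_i y_i$ by conditioning on $y_i$: since $\mathbb{E}[e^{s x_i y_i}\mid y_i]=e^{s^2 y_i^2/2}$, one gets $\mathbb{E}[e^{s x_i y_i}]=(1-s^2)^{-1/2}$ for $|s|<1$. By independence, $\mathbb{E}[e^{t\xi_p}]=(1-t^2/p)^{-p/2}$ for $|t|<\sqrt{p}$.

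Next I would apply Markov's inequality with the simple choice $t=1$ (permitted since $p>2>1$), giving
\[
\Pr[\xi_p > R] \;\le\; e^{-R}\,\bigl(1-1/p\bigr)^{-p/2}.
\]
The remaining task is to bound $(1-1/p)^{-p/2}$ by a small absolute constant for $p>2$. I would rewrite $(1-1/p)^{-p}=\bigl(1+\tfrac{1}{p-1}\bigr)^{p}=\bigl(1+\tfrac{1}{m}\bigr)^{m+1}$ with $m=p-1\ge 2$. Since $(1+1/m)^{m+1}$ is monotonically decreasing to $e$ as $m\to\infty$ (a standard fact from the proof that $(1+1/m)^m\uparrow e$), its value at $m=2$ (that is, $p=3$) is $(3/2)^3=27/8<4$, and in particular $(1-1/p)^{-p/2}\le 2$ for all $p>2$. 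Hence $\Pr[\xi_p>R]\le 2e^{-R}$, and by symmetry of $\xi_p$ the two-sided bound gives
\[
\Pr[|\xi_p|>R] \;\le\; 4\,e^{-R} \;\le\; (2e)\,e^{-R},
\]
using $4\le 2e\approx 5.44$, which is the claim.

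There is no real obstacle: the only thing one has to be careful about is the monotonicity bound $(1-1/p)^{-p/2}\le 2$ for $p>2$, which reduces to the familiar fact that $(1+1/m)^{m+1}\downarrow e$. Everything else is the textbook Chernoff argument, which works because the $(x_i y_i)$ are independent sub-exponential random variables with a clean, explicitly computable MGF.
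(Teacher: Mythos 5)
Your proof is correct and follows essentially the same route as the paper: compute $\mathbb{E}e^{t\xi_p}=(1-t^2/p)^{-p/2}$, apply the Chernoff bound with $t=1$, and use a two-sided union bound by symmetry. The only cosmetic difference is in bounding the prefactor $(1-1/p)^{-p/2}$ — you use the monotone decrease of $(1+1/m)^{m+1}$ to $e$ to obtain the bound $2$, whereas the paper uses $\log(1-x)>-2x$ for $0<x<1/2$ to get the slightly looser bound $e$; both suffice for the stated constant $2e$.
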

\begin{proof}
Since for $|t| < \sqrt{p}$,  
$\mathbb{E}e^{t\frac{x_1y_1}{\sqrt{p}}} =  (1-t^{2}/p)^{-1/2}$, by choosing $t = 1$ we have
\[\begin{split}
\Pr[\xi_p > R] 
& \leq e^{-M}(\mathbb{E}e^{\frac{x_1y_1}{\sqrt{p}}})^p \\
&  =  e^{-M}(1-\frac{1}{p})^{-p/2} \\
& \leq e^{-M} e,
\end{split}
\] where the last line is due to that $x=1/p$ satisfies $\log(1-x)/x > -2$ when $0 < x < 1/2$. The argument  for bounding $\Pr[\xi_p < -R]$ is similar.
\end{proof}
 
 \begin{lemma}\label{lemma:with|qp-q|}
Notations as in Sec. \ref{subsec:mainthm}. Suppose $k(x)$ is 
(Case 1) bounded, or (Case 2) in ${\cal H}_\mathcal{N}$ and ${\cal H}_p$ for all $p$, is bounded on $|x|\leq R$ for any $R>0$, and satisfies \[
\int_{|x| > R} k(x)^2 q_p(x) dx \to 0, \quad R\to \infty
\]  uniformly in $p$, then Eqn. (\ref{eq:with|qp(x)-q(x)|}) holds.
  \end{lemma}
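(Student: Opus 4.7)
The plan is to deduce the claim from the stronger fact that $q_p \to q$ in $L^1(\mathbb{R})$, i.e.\ convergence in total variation. Once this is available, a truncation argument handles the weight $k(x)^2$ in either case.

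First I would establish pointwise convergence $q_p(x) \to q(x)$ for every $x \in \mathbb{R}$. Writing $\xi_p = p^{-1/2}\sum_{i=1}^{p} x_i y_i$ with $x_i, y_i$ i.i.d.\ $\mathcal{N}(0,1)$, independence gives the characteristic function
\[
\phi_{\xi_p}(t) = \mathbb{E} e^{it\xi_p} = (1+t^2/p)^{-p/2},
\]
which is integrable in $t$ once $p \geq 3$. An elementary calculus check shows that for each fixed $t$ the map $p \mapsto (1+t^2/p)^{-p/2}$ is monotonically decreasing on $p \geq 1$, so for $p \geq 3$ one has the uniform integrable bound $(1+t^2/p)^{-p/2} \leq (1+t^2/3)^{-3/2} \in L^1(\mathbb{R},dt)$. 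Fourier inversion combined with dominated convergence then yields $q_p(x) \to q(x)$ pointwise. Since $q_p$ and $q$ are probability densities, Scheff\'e's lemma upgrades this to $\int_{\mathbb{R}} |q_p(x) - q(x)|\,dx \to 0$.

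For Case 1, where $|k| \leq C$ uniformly, the bound $\int k^2 |q_p - q|\,dx \leq C^2 \int |q_p - q|\,dx \to 0$ is immediate. For Case 2, fix $\epsilon > 0$ and use the $p$-uniform tail-decay hypothesis to choose $R$ with $\sup_p \int_{|x|>R} k(x)^2 q_p(x)\,dx < \epsilon$; since $k \in {\cal H}_{\mathcal{N}}$, after enlarging $R$ if necessary we also have $\int_{|x|>R} k(x)^2 q(x)\,dx < \epsilon$. Together these bound the tail contribution by $2\epsilon$. On $|x| \leq R$, local boundedness gives $|k(x)| \leq C_R$, so
\[
\int_{|x| \leq R} k^2 |q_p - q|\,dx \leq C_R^2 \int_{\mathbb{R}} |q_p - q|\,dx,
\]
which tends to $0$ by the $L^1$ convergence of the densities. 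Sending $\epsilon \to 0$ finishes the proof.

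The only non-routine step is establishing pointwise convergence of the densities rather than mere convergence in distribution; the monotonicity of $(1+t^2/p)^{-p/2}$ in $p$ is exactly what makes the dominated-convergence argument go through cleanly, bypassing the finer machinery of a local central limit theorem. The rest of the argument is standard measure-theoretic bookkeeping, with Case 2 simply exploiting the hypothesis as the uniform integrability needed to trade the unboundedness of $k$ against the $L^1$ convergence obtained from Scheff\'e.
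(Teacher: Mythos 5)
Your proposal is correct, and for Case 1 it takes a genuinely different (and arguably tighter) route than the paper. The paper reduces Case 2 to Case 1 exactly as you do, via the same three-term truncation of $\int k^2|q_p - q|$, so that part is identical. For Case 1, however, the paper splits $\int|q_p-q|$ into $|x|<R$ and $|x|>R$: the tail is controlled uniformly in $p$ by the large-deviation bound in Lemma~\ref{lemma:xipLargeDeviation}, while for the piece $\int_{|x|<R}|q_p-q|\,dx$ the paper simply asserts convergence to $0$ ``since $\xi_p$ converges in distribution to $\mathcal{N}(0,1)$.'' That inference is not automatic (convergence in distribution by itself does not give local $L^1$ convergence of densities). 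Your argument fills exactly this gap: you prove pointwise density convergence directly by Fourier inversion, using the explicit characteristic function $(1+t^2/p)^{-p/2}$, the observed monotonicity in $p$ as a dominating integrable envelope, and DCT; Scheff\'e then upgrades this to full $L^1$ convergence in one stroke, making the separate tail-bound lemma for $q_p$ unnecessary in Case 1. The paper's route has the advantage of reusing Lemma~\ref{lemma:xipLargeDeviation}, which it needs elsewhere anyway, while yours is more self-contained and supplies the missing justification for the local $L^1$ step.
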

\begin{proof}
First, we reduce (Case 2) to (Case 1). Notice that 
\[\begin{split}
& \int_{\mathbb{R}} k(x)^2 |q_p(x)- q(x)| dx \\
  \leq & \int_{|x| \leq R} k(x)^2 |q_p(x)- q(x)| dx + \int_{|x| >R} k(x)^2 q_p(x)dx \\
& \quad +  \int_{|x| >R} k(x)^2 q(x) dx.
\end{split} \] 
The last two terms can be made arbitrarily small independently of $p$ by choosing $R$ large, and for fixed $R$, the first term goes to 0 given that (Case 1) is proved.
  
To show the claim for (Case 1), it suffices to show that $ \int |q_p- q| dx \to 0$. Since $\xi_p$ converge in distribution to $\mathcal{N}(0,1)$, we know that for any finite $R$, $ \int_{|x|<R} |q_p(x)- q(x)| dx \to 0$. Thus, it suffices to show that \[
 \int_{|x| > R} q_p(x) dx \to 0, \quad R\to \infty
\]  uniformly in $p$. This follows from the large deviation bound that is given in Lemma \ref{lemma:xipLargeDeviation}.
\end{proof}

\section{Lemma in Sec. 4}\label{sec:A4} 

\begin{lemma}\label{lemma:4thmoment_lge2}
Let $X_{1},X_{2},X_{3},X_{4}$ be i.i.d distributed as $\mathcal{N}(0,p^{-1}I_{p})$, and $P_{l,p}(x)$ is the degree-$l$ Hermite-like polynomial as defined in Sec. \ref{subsec:hermite}, $l\ge2$. Then
\[
\mathbb{E}P_{l,p}(\sqrt{p}\xi_{12})P_{l,p}(\sqrt{p}\xi_{23})P_{l,p}(\sqrt{p}\xi_{34})P_{l,p}(\sqrt{p}\xi_{41})=\mathcal{O}_{l}(1)p^{-2},
\]
where $\xi_{ij}=X_{i}^{T}X_{j}$.
\end{lemma}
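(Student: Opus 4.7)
The plan is to condition on $X_1$ and $X_3$ and exploit the conditional independence of $X_2$ and $X_4$. Define
\[
A(X_1,X_3) := \mathbb{E}\bigl[P_{l,p}(\sqrt{p}\,X_1^T X_2)\,P_{l,p}(\sqrt{p}\,X_3^T X_2)\,\bigl|\,X_1,X_3\bigr].
\]
The pair $(X_2,X_4)$ is conditionally independent given $(X_1,X_3)$, and in the cyclic product $(\xi_{12},\xi_{23})$ depends only on $X_2$ while $(\xi_{34},\xi_{41})$ depends only on $X_4$ with the same functional form in $(X_1,X_3)$. Hence the target expectation equals $\mathbb{E}[A(X_1,X_3)^2]$, and it suffices to show $\mathbb{E}[A^2] = \mathcal{O}_l(1)p^{-2}$.

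Next I would use Lemma \ref{lemma:Plpmatchhl} to write $P_{l,p}(x) = h_l(x) + r_{l,p}(x)$, with $r_{l,p}$ a polynomial of degree at most $l$ whose coefficients are $\mathcal{O}_l(p^{-1})$, so that $A$ splits into a main term $A_1 := \mathbb{E}[h_l(W_1)h_l(W_2)\mid X_1,X_3]$ (with $W_1 := \sqrt{p}\,X_1^T X_2$, $W_2 := \sqrt{p}\,X_3^T X_2$) and three remainder pieces involving at least one copy of $r_{l,p}$. The remainder pieces are easy: conditional Cauchy--Schwarz gives $|\mathbb{E}[h_l(W_1)r_{l,p}(W_2)\mid X_1,X_3]|^2 \le \mathbb{E}[h_l(W_1)^2\mid X_1] \cdot \mathbb{E}[r_{l,p}(W_2)^2\mid X_3]$, where the first factor is a bounded polynomial in $|X_1|^2$ and the second is $\mathcal{O}_l(p^{-2})$ times a polynomial in $|X_3|^2$; these thus contribute $\mathcal{O}_l(p^{-2})$ to $\mathbb{E}[A^2]$. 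For $A_1$, conditional on $(X_1,X_3)$ the pair $(W_1,W_2)$ is jointly Gaussian with variances $|X_1|^2,|X_3|^2$ and covariance $X_1^T X_3$; writing $W_i = |X_i|Z_i$ with $(Z_1,Z_2)$ standard bivariate Gaussian of correlation $\rho = X_1^T X_3/(|X_1||X_3|)$, and expanding $h_l(\sigma z) = \sum_{k\le l,\,k\equiv l\,(\mathrm{mod}\,2)} c_{l,k}(\sigma)\,h_k(z)$, Mehler's relation $\mathbb{E}[h_k(Z_1)h_{k'}(Z_2)] = \delta_{kk'}\rho^k$ yields
\[
A_1 \;=\; (X_1^T X_3)^l \;+\; \sum_{0\le k<l,\; k\equiv l\,(\mathrm{mod}\,2)} c_{l,k}(|X_1|)\,c_{l,k}(|X_3|)\,\rho^k,
\]
using the exact identity $c_{l,l}(\sigma)=\sigma^l$ (the only contribution from $(\sigma z)^l$, since lower-degree pieces of $h_l(\sigma z)$ are orthogonal to $h_l(z)$) and the vanishing $c_{l,k}(1)=0$ for $k<l$.

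To finish, I would bound $\mathbb{E}[A_1^2]$ term by term using three facts: (i) conditional on $X_3$, $X_1^T X_3 \sim \mathcal{N}(0,|X_3|^2/p)$, so $\mathbb{E}[(X_1^T X_3)^{2l}] = \mathcal{O}_l(p^{-l})\le \mathcal{O}_l(p^{-2})$ since $l\ge 2$; (ii) the direction $\hat X_i := X_i/|X_i|$ is independent of $|X_i|$, and $\mathbb{E}[\rho^{2k}]=\mathcal{O}_k(p^{-k})$; (iii) since $c_{l,k}$ vanishes at $\sigma=1$, the factor $c_{l,k}(|X_i|)^2$ inherits the fluctuation $\mathbb{E}[(|X_i|^2-1)^2] = 2/p$, giving $\mathbb{E}[c_{l,k}(|X_i|)^2] = \mathcal{O}_l(p^{-1})$. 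The correction's contribution to $\mathbb{E}[A_1^2]$ thus breaks into a sum over $k$ of terms $\mathcal{O}_l(p^{-k-2})\le\mathcal{O}_l(p^{-2})$, and the cross term $\mathbb{E}[(X_1^T X_3)^l \cdot \mathrm{correction}]$ factorizes via independence of $\rho$ from the $|X_i|$'s and is even smaller.

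The hard part will be the bookkeeping of the lower-order ($k<l$) Mehler terms, which forces one to track the exact scale $|X_i|^2 - 1 = \mathcal{O}(p^{-1/2})$ rather than simply approximating $|X_i|\approx 1$. The critical case is $l=2$, where the $\mathcal{O}(p^{-2})$ bound is tight and comes almost entirely from the leading term $\mathbb{E}[(X_1^T X_3)^4]$; all correction contributions must be shown to be of the same order or smaller, which uses the independence of $\hat X_i$ from $|X_i|$ in a crucial way.
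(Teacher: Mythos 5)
Your proposal is correct, and it takes a genuinely different route from the paper's proof. The paper conditions only on (the direction of) $X_1$, decomposes $X_2,X_3,X_4$ into components along and orthogonal to $X_1$, Taylor-expands $P_{l,p}(\sqrt{p}\,\xi_{23})$ and $P_{l,p}(\sqrt{p}\,\xi_{34})$ around the orthogonal parts, and then argues term by term, using $\mathbb{E}\eta_3=0$ to kill the linear-in-$\eta_3$ cross terms. You instead condition on the \emph{pair} $(X_1,X_3)$, exploit conditional independence of $X_2,X_4$ together with the cyclic symmetry to write the target as $\mathbb{E}[A(X_1,X_3)^2]$, and then compute $A$ exactly via Mehler's formula once $P_{l,p}$ is replaced by $h_l$. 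This buys you a closed-form expansion of $A_1$ in powers of $\rho=X_1^TX_3/(|X_1||X_3|)$ in place of a Taylor remainder, and the square structure $\mathbb{E}[A^2]$ makes the sign and the needed moment bounds transparent. Interestingly, your argument is precisely the Gaussian analogue of what the paper does for the model $X_i\sim\mathcal{U}(S^{p-1})$ in Sec.~4.4, where the zonal-harmonic reproducing identity gives $\mathbb{E}[P_{l,p}(\sqrt{p}\,\xi_{12})P_{l,p}(\sqrt{p}\,\xi_{23})\mid X_1,X_3]=|J_l|^{-1/2}P_{l,p}(\sqrt{p}\,\xi_{13})$ and the fourth moment is then immediately $1/|J_l|=\mathcal{O}_l(p^{-l})$; your Mehler computation is the Gaussian substitute for that reproducing property, at the cost of lower-order corrections coming from the fluctuations of $|X_i|$.

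Two minor points on the bookkeeping, both already anticipated in your last paragraph. First, the step you flag as crucial really is: without the observation that $c_{l,k}(1)=0$ for $k<l$ (equivalently $(\sigma^2-1)\mid c_{l,k}(\sigma)$ by the parity argument), the $k=0$ Mehler term would contribute $\mathbb{E}[c_{l,0}(|X_1|)^2]\,\mathbb{E}[c_{l,0}(|X_3|)^2]$ at order $\mathcal{O}(1)$ rather than $\mathcal{O}(p^{-2})$, and the bound would fail; the factor $(|X_i|^2-1)$, together with $\mathbb{E}(|X_i|^2-1)^4=\mathcal{O}(p^{-2})$, is exactly what rescues this. Second, the cross term $\mathbb{E}[(X_1^TX_3)^l\cdot\text{correction}]$ is not needed to be ``even smaller'': the crude Cauchy--Schwarz bound $|\mathbb{E}[|X_1|^l c_{l,k}(|X_1|)]|=\mathcal{O}_l(p^{-1/2})$ combined with $\mathbb{E}[\rho^{l+k}]=\mathcal{O}(p^{-(l+k)/2})$ already gives $\mathcal{O}_l(p^{-2})$ for all $k<l$, $k\equiv l\ (\mathrm{mod}\ 2)$, with equality of exponents at the borderline case $l=2$, $k=0$ (where a sharper expansion does show the term is $\mathcal{O}(p^{-3})$, but that is not required).
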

\begin{proof}
Write 
\[
\xi_{12}=|X_{1}|\eta_{2},\,\xi_{14}=|X_{1}|\eta_{4},\,\xi_{23}=\eta_{2}\eta_{3}+\tilde{\xi}_{23},\,\xi_{34}=\eta_{3}\eta_{4}+\tilde{\xi}_{34}
\]
where $\eta_{2},\eta_{3},\eta_{4}$ are i.i.d distributed as $\mathcal{N}(0,p^{-1})$, and $|X_{1}|$, $\eta_{i}$'s and $\tilde{\xi}_{23}$,$\tilde{\xi}_{34}$ are jointly independent. Since $P_{l,p}(x)$ is a polynomial of degree $l$,
\begin{eqnarray*}
P_{l,p}(x_{1}+x_{2}) 
& = & P_{l,p}(x_{2})+x_{1}P_{l,p}^{'}(x_{2})+ P_{(2)}(x_{1},x_{2}) \\
P_{(2)}(x_{1},x_{2}) 
& = & \sum_{k=2}^{l}\frac{P_{l,p}^{(k)}(x_{2})}{k!}x_{1}^{k},
\end{eqnarray*}
thus
\begin{eqnarray*}
  & & \mathbb{E}P_{l,p}(\sqrt{p}\xi_{12})P_{l,p}(\sqrt{p}\xi_{23})P_{l,p}(\sqrt{p}\xi_{34})P_{l,p}(\sqrt{p}\xi_{41}) \\
 &= & \mathbb{E}P_{l,p}(\sqrt{p}|X_{1}|\eta_{2})P_{l,p}(\sqrt{p}|X_{1}|\eta_{4})\\
 & & \quad \cdot \left[P_{l,p}(\sqrt{p}\tilde{\xi}_{23})+\sqrt{p}\eta_{2}\eta_{3}P_{l,p}^{'}(\sqrt{p}\tilde{\xi}_{23})+P_{(2)}(\sqrt{p}\eta_{2}\eta_{3},\sqrt{p}\tilde{\xi}_{23})\right]\\
 &  & \quad \cdot\left[P_{l,p}(\sqrt{p}\tilde{\xi}_{34})+\sqrt{p}\eta_{3}\eta_{4}P_{l,p}^{'}(\sqrt{p}\tilde{\xi}_{34})+P_{(2)}(\sqrt{p}\eta_{3}\eta_{4},\sqrt{p}\tilde{\xi}_{34})\right].
\end{eqnarray*}

Notice that $\mathbb{E}\eta_{3}=0$, thus it suffices to show that
\begin{eqnarray*}
\mathbb{E}P_{l,p}(\sqrt{p}|X_{1}|\eta_{2})P_{l,p}(\sqrt{p}|X_{1}|\eta_{4})P_{l,p}(\sqrt{p}\tilde{\xi}_{23})P_{l,p}(\sqrt{p}\tilde{\xi}_{34}),\\
\mathbb{E}\sqrt{p}\eta_{2}P_{l,p}(\sqrt{p}\eta_{2}|X_{1}|)\cdot\sqrt{p}\eta_{4}P_{l,p}(\sqrt{p}|X_{1}|\eta_{4})\cdot\eta_{3}^{2}\cdot P_{l,p}^{'}(\sqrt{p}\tilde{\xi}_{23})P_{l,p}^{'}(\sqrt{p}\tilde{\xi}_{34}),\\
\mathbb{E}\sqrt{p}\eta_{2}P_{l,p}(\sqrt{p}\eta_{2}|X_{1}|)\cdot P_{l,p}^{'}(\sqrt{p}\tilde{\xi}_{23})\cdot P_{l,p}(\sqrt{p}|X_{1}|\eta_{4})\eta_{3}P_{(2)}(\sqrt{p}\eta_{3}\eta_{4},\sqrt{p}\tilde{\xi}_{34}),\\
\mathbb{E}P_{l,p}(\sqrt{p}|X_{1}|\eta_{2})P_{l,p}(\sqrt{p}|X_{1}|\eta_{4})P_{(2)}(\sqrt{p}\eta_{2}\eta_{3},\sqrt{p}\tilde{\xi}_{23})P_{(2)}(\sqrt{p}\eta_{3}\eta_{4},\sqrt{p}\tilde{\xi}_{34})
\end{eqnarray*}
are all at bounded by $\mathcal{O}_{l}(1)p^{-2}$. This can be done by making use of the facts that $\mathbb{E}|X_{1}|^{2m}=1+\mathcal{O}_{m}(1)p^{-1}$ (Eq. \ref{eq:E|X|2m}), and that the differences between the coefficients of $P_{l,p}(x)$ and those of $h_{l}(x)$ are $\mathcal{O}_{l}(1)p^{-1}$ (Lemma \ref{lemma:Plpmatchhl}). The condition $l \ge 2$ is needed to guarantee that $P_{l,p}(x)$ and $x$ are asymptotically orthogonal.
\end{proof}


\begin{proof}[Proof of Lemma \ref{lemma:m_A-m_B}]
We have
\begin{align*}
& m_{A}(z)-m_{B}(z) \\
& = \frac{1}{n}\left( \Tr ((A-zI)^{-1}) - \Tr ((B-zI)^{-1}) \right) \\
& = \frac{1}{n} \Tr ((A-zI)^{-1}(B-A) (B-zI)^{-1}),
\end{align*}
thus
\begin{align*}
& \mathbb{E} |m_{A}(z)-m_{B}(z)|^2  \\
& = \mathbb{E}\frac{1}{n^2}(\Tr ((A-zI)^{-1}(B-A) (B-zI)^{-1}))^2 \\
& \le \mathbb{E} \frac{1}{n^2} \Tr (((B-zI)^{-1}(A-zI)^{-1})^2 ) \Tr ((B-A)^2)  \\
& \le \mathbb{E} \frac{1}{n^2} \frac{n}{v^4} \sum_{i,j=1}^{n}|A_{ij}-B_{ij}|^{2},\\
& \le \frac{1}{v^4 n} \sum_{i,j=1}^{n}\mathbb{E}(f_{A}(X_{i}^{T}X_{j};p)-f_{B}(X_{i}^{T}X_{j};p))^{2}\\
& \le \frac{1}{v^4 n} n^{2}p^{-1}\epsilon ={\cal O}(1)\epsilon. \qedhere
\end{align*}


\end{proof}


\begin{lemma}\label{pf:omega_delta_p-9}
Let $\Omega_\delta$ be defined as in Eq. (\ref{eq:def.Omega_delta}),
\[
\Pr(\Omega_{\delta}^{c})\le{\cal O}(1)p^{-7}.
\]
\end{lemma}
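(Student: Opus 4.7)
The plan is a direct union bound. The complement $\Omega_\delta^c$ is the union of three families of events: (i) $\{|\eta_i| \ge \delta\}$ for $1 \le i \le n-1$, (ii) $\{|\tilde\xi_{ij}| \ge \delta\}$ for $1 \le i \ne j \le n-1$, and (iii) $\{\,||X_n|^2 - 1| \ge \sqrt{2}\delta\,\}$. I would control each family separately and then add.

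For (i), Eq.~(\ref{eq:eta_delta}) already gives $\Pr[|\eta_i|\ge \delta] \le \frac{1}{\sqrt{2}}p^{-10}$ from the standard Gaussian tail, so the union over $n-1 = \mathcal{O}(p)$ indices contributes $\mathcal{O}(1)p^{-9}$. For (iii), I would use $p|X_n|^2 \sim \chi^2_p$ together with the Laurent--Massart inequality $\Pr[|\chi^2_p - p| \ge 2\sqrt{px} + 2x] \le 2e^{-x}$; taking $x = 7\ln p$ gives $2\sqrt{7p\ln p} + 14\ln p < \sqrt{40 p\ln p} = \sqrt{2}\,\delta \cdot p$ for $p$ large enough (since $2\sqrt 7 < \sqrt{40}$), so this single event contributes at most $2p^{-7}$.

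The crux is (ii): the tail of $\sqrt{p}\,\tilde\xi_{ij}$ must be strong enough to survive a union over $\mathcal{O}(p^2)$ pairs, i.e.\ of order $p^{-10}$. Lemma~\ref{lemma:xipLargeDeviation} as stated only yields the sub-exponential rate $\mathcal{O}(1)e^{-M}$ from the fixed Chernoff parameter $t=1$, which is far too weak when $M = \sqrt{20\ln p}$. I would therefore redo the MGF computation, conditioning on $X_n$ so that the $\tilde X_i$ are i.i.d.\ $\mathcal{N}(0,p^{-1}I_{p-1})$, obtaining $\mathbb{E} e^{t\sqrt{p}\,\tilde\xi_{ij}} = (1-t^2/p)^{-(p-1)/2}$ (finite for $|t|<\sqrt{p}$), and optimize by taking $t = M$. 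Since $M^2/p \to 0$ one may use $-\log(1-t^2/p) \le t^2/p + (t^2/p)^2$, which gives an MGF bound $\exp(M^2/2 + \mathcal{O}(M^4/p))$ and hence a Chernoff tail $2\exp(-M^2/2 + o(1)) = \mathcal{O}(1)p^{-10}$. Summing over $\binom{n-1}{2}$ pairs contributes $\mathcal{O}(1)p^{-8}$.

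Adding the three pieces yields $\Pr(\Omega_\delta^c) \le \mathcal{O}(1)(p^{-9} + p^{-8} + p^{-7}) = \mathcal{O}(1)p^{-7}$. The only genuine obstacle is the refinement of Lemma~\ref{lemma:xipLargeDeviation} needed in (ii): the sub-exponential bound given there must be upgraded to a sub-Gaussian bound by reoptimizing the Chernoff parameter instead of fixing $t=1$. The condition $M \ll \sqrt{p}$ is precisely what keeps us in the finite-MGF regime where this optimization is valid; the other two estimates follow at once from tools already in the paper or from standard $\chi^2$ concentration.
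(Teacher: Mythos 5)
Your proposal is correct and uses the same union-bound skeleton as the paper's proof: split $\Omega_\delta^c$ into the events on $\{\eta_i\}$, on $\{\tilde\xi_{ij}\}$, and on $|X_n|^2$, bound each tail exponentially, and sum over $\mathcal{O}(p)$, $\mathcal{O}(p^2)$, and $1$ events respectively. The only genuine difference is in the $\tilde\xi_{ij}$ tail. The paper does not appeal to Lemma~\ref{lemma:xipLargeDeviation} at all here: it factorizes $\tilde\xi_{ij} = |\tilde X_i|\,\tilde\eta_{ij}$ with $\tilde\eta_{ij}\sim\mathcal{N}(0,p^{-1})$ independent of $|\tilde X_i|$, then combines the one-dimensional Gaussian tail of $\tilde\eta_{ij}$ (the same estimate as Eq.~(\ref{eq:eta_delta})) with the $\chi^2$ concentration of $|\tilde X_i|^2$ already used for $|X_n|^2$, reaching an $\mathcal{O}(1)p^{-9}$ per-pair bound. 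You instead apply a Chernoff bound directly to $\sqrt{p}\,\tilde\xi_{ij}$ with a growing parameter $t=M$, correctly diagnosing that a fixed-$t$ bound is far too weak at scale $\delta = M/\sqrt p$ when $M = \sqrt{20\ln p}$; since $M^2/p\to 0$ your MGF estimate $(1-M^2/p)^{-(p-1)/2} = e^{M^2/2 + o(1)}$ is valid, giving an $\mathcal{O}(1)p^{-10}$ per-pair tail. Both routes comfortably beat the $\mathcal{O}(p^2)$ union. Your Laurent--Massart treatment of $||X_n|^2-1|$ gives $2p^{-7}$, a bit looser than the paper's $p^{-9}$ for that single event, but still within budget, so you reach the same conclusion $\Pr(\Omega_\delta^c)\le\mathcal{O}(1)p^{-7}$. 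In short: same decomposition, two interchangeable mechanisms for the per-event sub-Gaussian tail; your direct Chernoff computation is more self-contained, whereas the paper's factorization simply reuses estimates already established for the other two families of events.
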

\begin{proof} For $\eta_{i}$ we have the concentration inequality Eq. (\ref{eq:eta_delta}); For each $\tilde{\xi}_{ij}$, we write it as 
\[
\tilde{\xi}_{ij}=|\tilde{X}_{i}|\tilde{\eta}_{ij},
\]
where $\tilde{\eta}_{ij}$ has marginal distribution $\mathcal{N}(0,p^{-1})$ and is independent of $|\tilde{X}_{i}|$. With inequality Eq. (\ref{bound.|Xn|^1-1isdelta_p-9}) which also holds for $|X_{i}|$ in place of $|X_{n}|$, we have
\begin{align*}
\Pr [ |\tilde{X}_{i}||\tilde{\eta}_{ij}|  >\delta ]
& \le \Pr \left[ |\tilde{X}_{i}|^{2}>1+\sqrt{\frac{40\ln p}{p}} \right] \\
& \quad + \Pr \left[|\tilde{X}_{i}||\tilde{\eta}_{ij}|>\delta,|\tilde{X}_{i}|^{2}<1+\sqrt{\frac{40\ln p}{p}}\right]\\
 & \le p^{-9}+\Pr[|\tilde{\eta}_{ij}|>\frac{\delta}{1.01}]\\
 & \le p^{-9}+\frac{1}{\sqrt{2}}p^{-9},
\end{align*}
 thus 
\[
\Pr[|\tilde{\xi}_{ij}|>\delta]<{\cal O}(1)p^{-9}.
\]

Then, a union bound gives
\begin{align*}
\Pr(\Omega_{\delta}^{c}) 
& \le (n-1)\Pr[|\eta_{i}|>\delta]  \\
& \quad + \frac{(n-1)(n-2)}{2}\Pr[|\tilde{\xi}_{ij}|>\delta]+\Pr[\left||X_{n}|^{2}-1\right|>\sqrt{2}\delta]\\
 & \le{\cal O}(1)p^{-9}+{\cal O}(1)p^{-7}+p^{-9}={\cal O}(1)p^{-7}.\qedhere
\end{align*}
\end{proof}

\begin{lemma}\label{lemma:step2_E|r_2|}
Notation as in Sec. \ref{subsec:bigproof}. $r_2$ defined in Eq. (\ref{eq:step2_r1r2}) satisfies \[
\mathbb{E}|r_{2}|\cdot\mathbf{1}_{\Omega_{\delta}}\le{\cal O}_{L}(1)M^{2}p^{-1/2}.
\]
\end{lemma}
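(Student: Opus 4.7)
The plan is to reduce everything to bounding $\mathbb{E}|r_{2,1}|^{2}$. From Eq.~(\ref{eq:step_2r21r22}) we have $r_{2} = 2a_{1,p}(r_{2,1}-r_{2,2})$, and a direct rearrangement of the definition yields
\[
r_{2,2} \;=\; a_{1,p}\,r_{2,1}\,\cdot\,\eta^{T}(\hat{A}^{(n)}-zI_{n-1})^{-1}\eta.
\]
On $\Omega_{\delta}$ the trailing scalar factor is bounded by $|a_{1,p}|\,|\eta|^{2}/v \le \mathcal{O}(1)M^{2}$, so Cauchy--Schwarz gives $\mathbb{E}|r_{2}|\mathbf{1}_{\Omega_{\delta}} \le \mathcal{O}(1)M^{2}\,\mathbb{E}|r_{2,1}| \le \mathcal{O}(1)M^{2}\,(\mathbb{E}|r_{2,1}|^{2})^{1/2}$. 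Thus it suffices to show $\mathbb{E}|r_{2,1}|^{2} \le \mathcal{O}_{L}(1)\,p^{-1}$.

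To bound $\mathbb{E}|r_{2,1}|^{2}$ I will follow the moment-method template of Lemma~\ref{bound:E|r_2|forA1}. With $B:=(\tilde{A}^{(n)}-zI_{n-1})^{-1}$,
\[
|r_{2,1}|^{2} = |X_{n}|^{2}\sum_{i,j,i',j'} f_{>1}(\xi_{in})f_{>1}(\xi_{i'n})\,\eta_{j}\eta_{j'}\,B_{ij}\overline{B_{i'j'}}.
\]
Since $\tilde{A}^{(n)}$ is independent of $(|X_{n}|,\eta)$, the $B$-expectation separates. Conditioning further on $|X_{n}|=c$ makes the $\eta_{i}$'s i.i.d.\ $\mathcal{N}(0,1/p)$, so the $\eta$-expectation factorizes across the distinct ``atoms'' in $\{i,j,i',j'\}$; any atom carrying only an isolated $\eta_{k}$-factor contributes $\mathbb{E}\eta=0$ and kills the product. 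This leaves four equality patterns: (I) $i=i'\ne j=j'$; (II) $\{i=j,\,i'=j'\}$ or $\{i=j',\,i'=j\}$ with $i\ne i'$; (III) $j=j'$ with $i,i'$ two further distinct atoms; (IV) $i=i'=j=j'$. Patterns (I) and (IV) are routine: (I) contributes $p^{-1}\,\mathbb{E}[|X_{n}|^{2}f_{>1}(\xi_{1n})^{2}]\cdot\operatorname{Tr}(BB^{*}) = \mathcal{O}_{L}(p^{-2})\cdot\mathcal{O}(n)=\mathcal{O}_{L}(p^{-1})$ using $\mathbb{E} f_{>1}(\xi_{1n})^{4}=\mathcal{O}_{L}(p^{-2})$, and (IV) is strictly smaller.

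The hard part will be controlling the cross-type patterns (II) and (III). Setting $\alpha(c):=\mathbb{E}[f_{>1}(c\eta)\mid c]$ and $\beta(c):=\mathbb{E}[\eta f_{>1}(c\eta)\mid c]$, pattern (II) contributes $\mathbb{E}[(|X_{n}|\beta(|X_{n}|))^{2}]\cdot\sum_{i\ne i'}B_{ii}\overline{B_{i'i'}}$ and pattern (III) is governed by $\mathbb{E}[|X_{n}|^{2}\alpha(|X_{n}|)^{2}]$. By orthogonality of $P_{l,p}$ ($l\ge 2$) with $P_{0,p}\equiv 1$ and $P_{1,p}(x)=x$ in $\mathcal{H}_{p}$, both $\alpha(|X_{n}|)$ and $|X_{n}|\beta(|X_{n}|)$ have mean zero; however, the naive Jensen bounds $\mathbb{E}\alpha^{2}\le\mathcal{O}_{L}(p^{-1})$ and $\mathbb{E}(|X_{n}|\beta)^{2}\le\mathcal{O}_{L}(p^{-2})$ are too weak for pattern (II), whose $B$-sum can be as large as $(n-1)^{2}/v^{2}=\mathcal{O}(p^{2})$. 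To extract the missing factor of $p^{-1}$, I will expand $\beta(c)=p^{-1}\sum_{l=2}^{L}a_{l,p}\,\psi_{l}(c)$ with $\psi_{l}(c):=\mathbb{E}_{u\sim\mathcal{N}(0,1)}[uP_{l,p}(cu)]$, apply Stein's identity $\psi_{l}(c)=c\,\mathbb{E}_{v\sim\mathcal{N}(0,c^{2})}[P_{l,p}'(v)]$, substitute the derivative formula Eq.~(\ref{eq:Plpprime}), and Taylor-expand the Gaussian moment around $c^{2}=1$ using $\mathbb{E}_{\mathcal{N}(0,c^{2})}[P_{l-1,p}(v)]=\mathcal{O}_{l}(|c^{2}-1|+p^{-1})$. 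Combined with the concentration $\mathbb{E}(|X_{n}|^{2}-1)^{2}=2/p$, this gives the sharper $\mathbb{E}(|X_{n}|\beta(|X_{n}|))^{2}\le\mathcal{O}_{L}(p^{-3})$, and an analogous argument for $\alpha$ gives $\mathbb{E}\alpha(|X_{n}|)^{2}\le\mathcal{O}_{L}(p^{-2})$. Multiplying by the $B$-sums $\mathcal{O}(p^{2})$ for (II) and $\|B^{T}\mathbf{1}\|^{2}=\mathcal{O}(n)$ for (III), both contributions become $\mathcal{O}_{L}(p^{-1})$. Summing over the four patterns yields $\mathbb{E}|r_{2,1}|^{2}\le\mathcal{O}_{L}(p^{-1})$, and the reduction above then completes the proof.
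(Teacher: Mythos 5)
Your proof follows the same moment-method strategy as the paper: reduce $\mathbb{E}|r_2|\mathbf{1}_{\Omega_\delta}$ to $\mathbb{E}|r_{2,1}|$ via the identity $r_{2,2}=r_{2,1}\cdot a_{1,p}\eta^{T}(\hat{A}^{(n)}-zI_{n-1})^{-1}\eta$ and the uniform bound $\mathcal{O}(1)M^2$ for the scalar factor on $\Omega_\delta$, and then bound $\mathbb{E}|r_{2,1}|^{2}$ by expanding the square and conditioning on $(|X_n|,\eta)$ so that the $\tilde B$-expectation splits off and the $\eta$-expectation factorizes over atoms. Your treatment of the pairings $i_1=i_2,\,i_1'=i_2'$ (and the swapped version) --- recognizing that the diagonal $B$-sum can be as large as $\mathcal{O}(n^2)$, and compensating with a sharper estimate for $\beta(c)=\mathbb{E}[\eta f_{>1}(c\eta)]$ obtained from Stein's identity, the recurrence $P_{l,p}'=\sqrt{l}P_{l-1,p}+\mathcal{O}_l(p^{-1})$ in Eq.~(\ref{eq:Plpprime}), and $\mathbb{E}(|X_n|^2-1)^2=2/p$ --- is correct and actually fills in detail that the paper compresses into a one-line ``$\mathcal{O}_L(1)p^{-1}$.''

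There is, however, a gap in your case enumeration. You assert that killing every pattern in which a singleton atom carries an isolated $\eta_k$-factor ``leaves four equality patterns.'' That argument eliminates every pattern where $i_2$ or $i_2'$ is a singleton, but it does \emph{not} eliminate the two $3$-$1$ splits $\{i_1\}\{i_2=i_1'=i_2'\}$ and $\{i_1'\}\{i_1=i_2=i_2'\}$: there the singleton atom carries only an $f_{>1}$-factor, whose conditional mean $\alpha(c)=\mathbb{E}[f_{>1}(c\eta)\mid c]$ is not zero (it is only approximately zero, of order $\mathcal{O}_L(1)p^{-1/2}(|c^2-1|+p^{-1})$). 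The paper's own enumeration explicitly includes this case (the bullet ``$\{i_{1},\,i_{2}=i_{1}'=i_{2}',\text{ or }i_{1}'\text{ as }i_{1}\}$''). These terms are in fact harmless: the $B$-sum $\sum_{j}\sum_{i_1\ne j}\tilde B_{i_1 j}\overline{\tilde B_{jj}}$ is $\mathcal{O}(n/v^2)$ by Cauchy--Schwarz, and the $(\eta,|X_n|)$-factor $\mathbb{E}\bigl[\alpha(|X_n|)\cdot |X_n|^2\mathbb{E}_\eta[\eta^2 f_{>1}(|X_n|\eta)]\bigr]=\mathcal{O}_L(p^{-5/2})$, so the contribution is $\mathcal{O}_L(p^{-3/2})$, subdominant to the target $\mathcal{O}_L(p^{-1})$. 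Your conclusion therefore stands, but the stated exhaustiveness of the four patterns is wrong and should be repaired by adding these two $3$-$1$ split patterns and checking them as above.
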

\begin{proof}
From Eq. (\ref{eq:step_2r21r22}), firstly, 
\[
r_{2,1} = f_{(2)}^{T}(\tilde{A}^{(n)}-zI_{n-1})^{-1}(|X_{n}|\eta)
\]
satisfies $\mathbb{E}|r_{2,1}|\le{\cal O}_{L}(1)p^{-1/2}$ by a moment bound: 
recall the definition of $\xi_{in}$ as in Eq. (\ref{eq:Xn_2}), and that $f_{>1}(\xi)$ is a linear combination of rescaled and renormalized Hermite-like polynomials of degree $\ge 2$.
Also, $\mathbb{E}|X_n|^{2m} = 1+{\cal O}_{m}(1)p^{-1} $ (Eq. (\ref{eq:E|X|2m})), and $|X_n|$ is independent from $\eta_i$'s and $\tilde{X}_i$'s. Denote $\tilde{B} = (\tilde{A}^{(n)}-zI_{n-1})^{-1}$. By taking expectation over $|X_n|$ first and then over $\eta_i$'s, we have
\begin{align*}
\mathbb{E}|r_{2,1}|^{2} & =\mathbb{E}\left|\sum_{i_{1},i_{2}=1}^{n-1}f_{>1}(\xi_{i_{1}n})\xi_{i_{2}n}\tilde{B}_{i_{1}i_{2}}\right|^{2}\\
 & =\mathbb{E}\sum_{i_{1},i_{2}}\sum_{i_{1}^{'},i_{2}^{'}}f_{>1}(\xi_{i_{1}n})\xi_{i_{2}n}f_{>1}(\xi_{i_{1}^{'}n})\xi_{i_{2}^{'}n}\tilde{B}_{i_{1}i_{2}}\overline{\tilde{B}_{i_{1}^{'}i_{2}^{'}}}\\
 & = \{i_{1}=i_{2}=i_{1}^{'}=i_{2}^{'}\}+\{i_{1},i_{2}=i_{1}^{'}=i_{2}^{'},\text{or \ensuremath{i_{1}^{'}}as \ensuremath{i_{1}}}\}  \\
 & \quad + \{i_{2}=i_{2}^{'},i_{1},i_{1}^{'}\}+\{i_{1}=i_{2},i_{1}^{'}=i_{2}^{'},\text{or \ensuremath{i_{1}^{'}}as \ensuremath{i_{1}}}\}+\{i_{1}=i_{1}^{'},i_{2}=i_{2}^{'}\}\\
 & ={\cal O}_{L}(1)p^{-1}+\nu_{>1, p} p^{-2}\mathbb{E}\mathbf{Tr}(\overline{\tilde{B}}^{T}\tilde{B})\\
 & \le{\cal O}_{L}(1)p^{-1}+\mathcal{O}(1)\cdot p^{-2}\frac{n}{v^{2}}={\cal O}_{L}(1)p^{-1},
\end{align*}
where by $\{i_1, i_2 = i_1^{'}=i_2^{'}\}$ we denote the terms in summation where the last three indices take the same value while $i_1$ is distinct from them, and similar for others. 

Secondly, 
\begin{align*}
r_{2,2} & =(f_{(2)}^{T}(\tilde{A}^{(n)}-zI_{n-1})^{-1}(|X_{n}|\eta))(a_{1}(p)\eta^{T}(\hat{A}^{(n)}-zI_{n-1})^{-1}\eta)\\
 & =r_{2,1}(a_{1}(p)\eta^{T}(\hat{A}^{(n)}-zI_{n-1})^{-1}\eta),
\end{align*}
where 
\begin{align*}
|a_{1}(p)\eta^{T}(\hat{A}^{(n)}-zI_{n-1})^{-1}\eta|\cdot\mathbf{1}_{\Omega_{\delta}} & \le {\cal O}(1) s((\hat{A}^{(n)}-zI_{n-1})^{-1})||\eta||^{2}\cdot\mathbf{1}_{\Omega_{\delta}}\\
 & \le {\cal O}(1)M^{2}={\cal O}(1)M^{2},
\end{align*}
thus 
\[
\mathbb{E}|r_{2,2}|\cdot\mathbf{1}_{\Omega_{\delta}}\le{\cal O}(1)M^{2}\mathbb{E}|r_{2,1}|\le{\cal O}(1)M^{2}\cdot{\cal O}_{L}(1)p^{-1/2}={\cal O}_{L}(1)M^{2}p^{-1/2}.
\]
Then 
\[
\mathbb{E}|r_{2}|\cdot\mathbf{1}_{\Omega_{\delta}}\le {\cal O}(1) (\mathbb{E}|r_{2,1}|+\mathbb{E}|r_{2,2}|\cdot\mathbf{1}_{\Omega_{\delta}})\le{\cal O}_{L}(1)M^{2}p^{-1/2}.\qedhere
\]
\end{proof}

\begin{lemma}\label{lemma:kmoment_S^p-1}
Notations as in Sec. \ref{subsec:XiS},\[
\mathbb{E}(\xi^{'}_p)^{k}=\begin{cases}
(k-1)!!+{\cal O}_{k}(1)p^{-1}, & k\:\text{even;}\\
0, & k\:\text{odd}.
\end{cases}\]
\end{lemma}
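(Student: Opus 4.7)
The plan is to compute the moments of $\xi'_p = \sqrt{p}\,u$, where $u = X_i^T X_j$ has the explicit density $Q'_p(u) = A_p(1-u^2)^{(p-3)/2}$ on $[-1,1]$, directly via Beta/Gamma integrals. Since $\mathbb{E}(\xi'_p)^k = p^{k/2}\,\mathbb{E} u^k$, it suffices to compute $\mathbb{E} u^k$ exactly and then expand in $p$.

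For odd $k$, the density $Q'_p$ is an even function of $u$, so $\mathbb{E} u^k = 0$ immediately.

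For even $k = 2m$, I would write both the numerator and the normalization as Beta integrals (via the substitution $t = u^2$) and simplify using $\Gamma$-function identities:
\begin{equation*}
\mathbb{E} u^{2m} \;=\; \frac{\int_{-1}^{1} u^{2m}(1-u^2)^{(p-3)/2}\,du}{\int_{-1}^{1}(1-u^2)^{(p-3)/2}\,du} \;=\; \frac{B(m+\tfrac12,\tfrac{p-1}{2})}{B(\tfrac12,\tfrac{p-1}{2})} \;=\; \frac{\Gamma(m+\tfrac12)\,\Gamma(p/2)}{\Gamma(\tfrac12)\,\Gamma(m+p/2)}.
\end{equation*}
Using the standard identities $\Gamma(m+\tfrac12)/\Gamma(\tfrac12) = (2m-1)!!/2^m$ and $\Gamma(m+p/2)/\Gamma(p/2) = \prod_{j=0}^{m-1}(p/2 + j)$, the right-hand side collapses to
\begin{equation*}
\mathbb{E} u^{2m} \;=\; \frac{(2m-1)!!}{\prod_{j=0}^{m-1}(p+2j)}.
\end{equation*}
Multiplying by $p^m$ gives the clean form
\begin{equation*}
\mathbb{E}(\xi'_p)^{2m} \;=\; (2m-1)!! \,\prod_{j=0}^{m-1}\frac{p}{p+2j} \;=\; (2m-1)!!\,\prod_{j=0}^{m-1}\Bigl(1+\tfrac{2j}{p}\Bigr)^{-1}.
\end{equation*}

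The final step is a straightforward expansion: for fixed $m$ and $p$ large, each factor satisfies $(1+2j/p)^{-1} = 1 - 2j/p + \mathcal{O}_j(1)p^{-2}$, hence
\begin{equation*}
\prod_{j=0}^{m-1}\Bigl(1+\tfrac{2j}{p}\Bigr)^{-1} \;=\; 1 - \tfrac{m(m-1)}{p} + \mathcal{O}_m(1)p^{-2} \;=\; 1+\mathcal{O}_m(1)p^{-1},
\end{equation*}
which yields $\mathbb{E}(\xi'_p)^{2m} = (2m-1)!! + \mathcal{O}_m(1)p^{-1}$ as claimed. There is no real obstacle here: the whole argument is an exact Beta-function evaluation followed by a first-order expansion, and the constant implicit in $\mathcal{O}_m(1)$ depends only on $m$.
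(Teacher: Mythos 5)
Your proof is correct, but it takes a genuinely different route from the paper's. You compute $\mathbb{E}(\xi'_p)^{2m}$ directly from the explicit density $Q'_p(u)=A_p(1-u^2)^{(p-3)/2}$ as a Beta integral, arriving at the exact closed form $\mathbb{E}(\xi'_p)^{2m}=(2m-1)!!\prod_{j=0}^{m-1}\left(1+\tfrac{2j}{p}\right)^{-1}$ and then expanding; this is clean, self-contained, and actually yields a sharper statement than the lemma asks for (the exact product, not just the $\mathcal{O}_m(1)p^{-1}$ bound). The paper instead avoids touching the spherical density at all: it observes that if $X,Y\sim\mathcal{N}(0,p^{-1}I_p)$ are independent then $\sqrt{p}X^TY$ and $\xi'_p\,|X|\,|Y|$ are equal in law with the three factors independent, so $\mathbb{E}\xi_p^{2m}=(\mathbb{E}|X|^{2m})^2\,\mathbb{E}(\xi'_p)^{2m}$, and then divides the already-established Gaussian moment formula (Eq.\ (\ref{eq:Exi^k})) by $(\mathbb{E}|X|^{2m})^2=1+\mathcal{O}_m(1)p^{-1}$. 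That reduction buys uniformity of method (the Gaussian moments were needed anyway, and the $\mathbb{E}|X|^{2m}$ estimate in Eq.\ (\ref{eq:E|X|2m}) is reused elsewhere), whereas your direct computation buys an explicit formula and avoids relying on the prior Gaussian result. Both arguments are valid; your expansion step $\prod_{j=0}^{m-1}(1+2j/p)^{-1}=1-\tfrac{m(m-1)}{p}+\mathcal{O}_m(1)p^{-2}$ is correct since $\sum_{j=0}^{m-1}2j=m(m-1)$.
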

\begin{proof}
The odd moments vanish since the distribution of $\xi_p'$ is symmetric with respect to 0. For even moments, let $k=2m$. Let $\xi_p=\sqrt{p}X^{T}Y$ where $X$ and $Y$ are i.i.d $\mathcal{N}(0,p^{-1}I_{p})$, and we have that $\xi_p$ and $\xi_p'|X||Y|$ observe the same probability distribution. Notice that $\xi_p'$, $|X|$ and $|Y|$ are independent, so 
\[
\mathbb{E}\xi_p^{2m}=\mathbb{E}|X|^{2m}\mathbb{E}|Y|^{2m}\mathbb{E}(\xi_p')^{2m}=(\mathbb{E}|X|^{2m})^{2}\mathbb{E}(\xi_p')^{2m}.
\]
By Eq. (\ref{eq:Exi^k}), to show the claim it suffices to show that $\mathbb{E}|X|^{2m}=1+{\cal O}_{m}(1)p^{-1}$. To do this, define
\[
r=|X|^{2}-1=\sum_{j=1}^{p}\left(X_{j}^{2}-\frac{1}{p}\right).
\]
Due to the mutual independence of the $X_{j}$'s, the odd moments of $r$ vanish; $\mathbb{E}r^{2}=2p^{-1}$, and generally for even
$l$ 
\[ 
\mathbb{E}\left( \sqrt{\frac{p}{2}}\,r \right)^{l}=(l-1)!!+{\cal O}_{l}(1)p^{-1},
\]
so $ \mathbb{E}r^{l}={\cal O}_{l}(1)p^{-l/2}$. Then 
\begin{align}
\mathbb{E}|X|^{2m} & =\mathbb{E}(1+r)^{m} \nonumber \\ 
 & =1+\sum_{l=2,l\text{ even}}^{m}c(l,m)\mathbb{E}r^{l} \nonumber \\
 & =1+\sum_{l=2,l\text{ even}}^{m}c(l,m){\cal O}_{l}(1)p^{-l/2} \nonumber\\
 & =1+{\cal O}_{m}(1)p^{-1}.\label{eq:E|X|2m}
 \qedhere
\end{align}
\end{proof}

\bibliography{cheng_RMT}{}
\bibliographystyle{imsart-nameyear}

\end{document}